\newcommand{\z}[0]{{\mathbb Z}}
\newcommand{\n}[0]{{\mathbb N}}
\newcommand{\co}[0]{{\mathbb C}}
\newcommand{\Lmb}[0]{\mathbb{L}}
\newcommand{\C}[0]{{\mathcal C}}
\newcommand{\lbeta}[0]{\bar{\beta}}
\newcommand{\lcm}[0]{\text{lcm}}
\newtheorem{theo}{Theorem}[section]
\newtheorem{prop}[theo]{Proposition}
\newtheorem{cor}[theo]{Corollary}
\newtheorem{lem}[theo]{Lemma}
\theoremstyle{definition}
\newtheorem{rem}{Remark}[section]
\newtheorem{ex}{Example}[section]
\title[The motivic zeta function of a space monomial curve]{The motivic Igusa zeta function of a space monomial curve with a plane semigroup}
\author[H. Mourtada]{Hussein Mourtada}
\address[H. Mourtada]{Universit\'e de Paris, Sorbonne Universit\'e, CNRS, Institut de Math\'ematiques de Jussieu-Paris Rive Gauche, F-75013 Paris, France.} 
\email{hussein.mourtada@imj-prg.fr}
\author[W. Veys]{Willem Veys}
\author[L. Vos]{Lena Vos}
\address[W. Veys \and L. Vos]{KU Leuven, Departement Wiskunde, Celestijnenlaan 200B, bus 2400, 3001 Leuven, Belgium.}
\email{wim.veys@kuleuven.be, lena.vos@kuleuven.be}
\thanks{The second author is partially supported by the Research Foundation - Flanders (FWO) project G.0792.18N. The third author is supported by a PhD Fellowship of the Research Foundation - Flanders (no. 71587).}
\begin{document}

\begin{abstract} 
In this article, we compute the motivic Igusa zeta function of a space monomial curve that appears as the special fiber of an equisingular family whose generic fiber is a complex plane branch. To this end, we determine the irreducible components of the jet schemes of such a space monomial curve. This approach does not only yield a closed formula for the motivic zeta function, but also allows to determine its poles. We show that, while the family of the jet schemes of the fibers is not flat, the number of poles of the motivic zeta function associated with the space monomial curve is equal to the number of poles of the motivic zeta function associated with a generic curve in the family. 
\end{abstract}

\footnote{\emph{2020 Mathematics Subject Classification.} Primary: 14E18; 
Secondary: 14B05, 
 14B07, 
 14H20. 
 }
\footnote{\emph{Key words and phrases.} motivic Igusa zeta function, jet schemes, curve singularities.} 

\maketitle 

\section*{Introduction}

The history of the \emph{motivic Igusa zeta function} goes back to the seventies when Igusa studied the \emph{$p$-adic Igusa zeta function}, which is related to the classical problem in number theory of computing the number of solutions of congruences. More precisely, the original Igusa zeta function counts, for a non-constant polynomial ${f\in \z[x_1,\ldots,x_n]}$ and a prime number $p$, the $\z/(p^{m+1}\z)$-points of $X=\{f=0\}$, when $m$ varies in $\n$. It was introduced by Weil~\cite{W}, and its basic properties, such as rationality, were first investigated by Igusa~\cite{I}. In analogy with the $p$-adic zeta function, Denef and Loeser~\cite{DL2} introduced the \lq more general\rq\ motivic Igusa zeta function in which $f\in \mathbb{C}[x_1,\ldots,x_n]$ is a complex polynomial, and the $\z/(p^{m+1}\z)$-points of $X = \{f=0\}$ are replaced by its $\co[t]/(t^{m+1})$-points. It is more general in the sense that the earlier zeta function can be obtained from the motivic one. \\

 The space of $\co[t]/(t^{m+1})$-points of $X = \{f=0\}$ or, equivalently, of morphisms $\text{Spec} (\co[t]/(t^{m+1})) \rightarrow X$ has a natural scheme structure; it is denoted by $X_m$ and called the \emph{$m$th jet scheme} of $X$. Geometrically, if we consider $X$ in the affine space $\co^n$, the space $X_m$ can be thought of as the moduli space parameterizing germs of curves in $\co^n$  which have a \lq contact\rq\ with $X$ larger than $m$. Simple invariants of the $X_m$ (e.g., their irreducible components and their dimensions) encode deep information on the singularities of $X$, see for instance~\cite{M1},\cite{M3} and~\cite{Mu1}. In terms of these jet schemes, the motivic Igusa zeta function $Z_X^{mot}(T)$ associated with $X$ (or with $f$) can be written as \[Z_X^{mot}(T) = 1 - \frac{1-T}{T}J_X(T),\] where $J_X(T)$ is the Poincar\'e series \[J_X(T) := \sum_{m \geq 0}[X_m](\Lmb^{-n}T)^{m+1} \in \mathcal{M}_{\co}[[T]].\] Here, $\mathcal{M}_{\co}$ is a localization of the \emph{Grothendieck ring} of complex varieties, and $[X_m]$ and $\Lmb$ are the classes of $X_m$ and of the affine line $\mathbb{C}$ in this Grothendieck ring, respectively. Clearly, this expression also makes sense when $X$ is any subscheme of $\co^n$ given by some ideal $I$ in $\mathbb{C}[x_1,\dots,x_n]$, instead of just a hypersurface. Furthermore, the motivic zeta function turns out to be a rational function in $T$, and it is natural to study its poles. \\

The motivic Igusa zeta function for one polynomial $f \in \co[x_1, \ldots, x_n]$ can also be expressed in terms of an \emph{embedded resolution of singularities} of $f$; the analogous expression for an ideal is in terms of a \emph{principalization} of the ideal. This formula in the hypersurface case can be found in~\cite{DL2}, and its generalization to ideals is mentioned in~\cite{VZ}. It is the most classical way to compute the motivic zeta function and allows to determine a complete list of candidates poles of this zeta function. However, it is in general very difficult to calculate a principalization and to verify whether the candidate poles are actual poles; usually, \lq most\rq\ of the candidates are in fact no actual poles. Therefore, in this article, in order to determine the motivic zeta function and its poles, we will compute the above Poincar\'e series, based on the structure of the jet schemes. \\

We will apply this approach to a class of monomial curves that naturally appear as the special fibers of (equisingular) families of curves whose generic fibers are isomorphic to some irreducible plane (germ of a) curve. More precisely, let $\C:=\{f=0\}\subset (\co^2,0)$ be a germ of a complex plane curve defined by an irreducible series $f\in\co[[x_0,x_1]]$ with $f(0) = 0$, and let \[\nu_{\C}: R:=\frac{\co[[x_0,x_1]]}{(f)} \longrightarrow \n\] be the associated valuation, where $\nu_{\C}(h)=(f,h)_0$ is the local intersection multiplicity of the curve $\C$ and the curve $\{h=0\}$. The semigroup $\Gamma(\C):=\{\nu_{\C}(h) \mid h \in R\setminus \{0\}\}\subset \n$ is finitely generated, and we can identify a unique minimal system of generators $(\bar{\beta}_0,\ldots,\bar{\beta}_g)$ of $\Gamma(\C)$. Let $(Y,0) \subset (\co^{g+1},0)$ be the image of the monomial map $M:(\co,0) \rightarrow (\co^{g+1},0)$ given by $M(t)=(t^{\bar{\beta}_0},t^{\bar{\beta}_1},\ldots,t^{\bar{\beta}_g}).$ It is an irreducible curve with the \lq plane\rq\ semigroup $\Gamma(\C)$ as its semigroup and it is the special fiber of a flat family $\eta:(\chi,0) \subset (\co^{g+1} \times \co,0)\rightarrow (\co,0)$ whose generic fiber is isomorphic to $\C$. We call $Y$ the \emph{monomial curve associated with $\C$}, and the explicit equations defining $Y$ in $\co^{g+1}$ are of the form
	\[\left\{ 
	\begin{array}{r c l l}
	x_1^{n_1} & - & x_0^{n_0}  &  = 0 \\
	x_2^{n_2}  &- & x_0^{b_{20}}x_1^{b_{21}} &= 0  \\
	& \vdots & &\\
	x_g^{n_g} &- & x_0^{b_{g0}}x_1^{b_{g1}}\cdots x_{g-1}^{b_{g(g-1)}} & = 0,\\
 	 \end{array}
	\right.\]
where $n_i > 1$ and $b_{ij} \geq 0$ are integers that are defined in terms of $(\bar{\beta}_0,\ldots, \bar{\beta}_g)$. \\

We will first study the jet schemes $Y_m$ of $Y$ for every $m \in \n$. Because $Y_0 = Y$ and the natural morphism $\pi_m:Y_m\rightarrow Y$ induces a trivial fibration over $Y \setminus \{0\}$ with fiber $\co^m$, the interesting information is concentrated in the fibers $\pi_m^{-1}(0)$ for $m\geq 1$. From the irreducible components of these fibers, we easily find the decomposition into irreducible components of the whole jet scheme $Y_m$ for $m\geq 1$, see Theorem~\ref{ThmIrrCom} and Corollary~\ref{CorIrrComWhole}, respectively. In addition, we can associate with this decomposition a natural graph which is similar to the one in~\cite{CM},~\cite{M1} and~\cite{M3}, and which we use to encode the computation of the motivic Igusa zeta function of $Y$, see Figure~\ref{GeneralPicture}. With this point of view, we are able to compute a closed formula for the motivic zeta function in Theorem~\ref{TheoremMotFunction}: 
\begin{align*}
	Z^{mot}_Y(T) =  & \frac{1-(\Lmb-1)\Lmb^{-(g+1)} - \Lmb^{-(g+1)}T}{1-\Lmb^{-g}T} + \frac{P_0(T)}{1- \Lmb^{-\nu_1}T^{N_1}} \\
	&+ \sum_{i=1}^{g-1}\frac{P_i(T)}{(1-\Lmb^{-\nu_i}T^{N_i})(1-\Lmb^{-\nu_{i+1}}T^{N_{i+1}})} - \frac{(\Lmb-1)\Lmb^{-(\nu_g + g + 1)}(1-T)T^{N_g}}{(1-\Lmb^{-g}T)(1-\Lmb^{-\nu_g}T^{N_g})},
\end{align*}
where $P_i(T)$ for $i = 0, \ldots, g-1$ are concrete polynomials with coefficients in the ring $\z[\Lmb, \Lmb^{-1}]$, and $(N_i,\nu_i)$ for $i = 1, \ldots, g$ are couples of known positive integers with \[\frac{\nu_i}{N_i} = \frac{1}{n_i\lbeta_i} \bigg(\sum_{l=0}^i\lbeta_l - \sum_{l=1}^{i-1}n_l\lbeta_l\bigg) + (i-1)+ \sum_{l=i+1}^g \frac{1}{n_l}.\] Except for some \lq small\rq\ concrete cases, we do not see how one can obtain such a formula using a principalization, see also Remark~\ref{RemPrinc}. Furthermore, we obtain only $g+1$ candidate poles: \[\Lmb^g, \qquad \Lmb^{\frac{\nu_i}{N_i}}, \quad i=1, \ldots, g.\] Using residues and the related \emph{topological Igusa zeta function}, we prove that, contrary to formulas that one could obtain using a principalization, all these candidate poles are actual poles, see Theorem~\ref{ThmPoles}. We also get the \emph{log canonical threshold} of $Y\subset \co^{g+1}$ given by $\frac{\nu_1}{N_1} = \sum_{l=0}^g \frac{1}{n_l}.$ Note that the number of poles of the motivic zeta function of $Y$ is equal to the number of poles of the motivic zeta function of the plane branch $\C$. This implies that the motivic zeta function associated with the special fiber of the family $\eta: (\chi,0) \rightarrow (\co,0)$ has the same number of poles as the motivic zeta function associated with the generic fiber. This is a fascinating result as the induced family on the level of jet schemes is not flat. More precisely, let $S := (\co,0)$ and consider, for every $m \in \n$, the \emph{relative $m$th jet scheme} $((\chi,0)/S)_m$ of $\eta:(\chi,0) \rightarrow S$ with the natural morphism $\eta_m:((\chi,0)/S)_m \rightarrow S$, whose fibers are isomorphic to the $m$th jet schemes of the fibers of $\eta$. Then, although the family $\eta$ is equisingular (in particular, flat), we show in Theorem~\ref{TheoremNotFlat} that the family $\eta_m$ is not flat for $m$ large enough. We would like to point out that, in the hypersurface case, an equisingular family of hypersurfaces does induce a flat family on the jet schemes (with their reduced structures)~\cite[Theorem 3.4]{LA}. \\

The poles of the motivic Igusa zeta function associated with a complex polynomial $f \in \co[x_1, \ldots, x_n]$ are the subject of an intriguing open problem, the so-called \emph{monodromy conjecture}, which relates number theoretical invariants and topological invariants of $f$. Roughly speaking, it predicts a relation between the poles of the motivic zeta function and the action of the monodromy of $f$, seen as a function $f:\co^n\rightarrow \co$, on the cohomology of its Milnor fiber at some point $x \in X\subset \co^n.$ For an ideal $I$, one can state the \emph{generalized monodromy conjecture} in which Verdier monodromy replaces the classical monodromy. To date, both the classical and the generalized conjecture have only been proven in full generality for polynomials and ideals in two variables, see~\cite{L} and~\cite{VV}, respectively. In higher dimension, there are various partial results in the hypersurface case (we refer to the introduction of~\cite{BV} for a list of references), while in the non-hypersurface case, the most general result so far is a proof for monomial ideals~\cite{HMY}. The results in this article make the first part of this conjecture for monomial curves of the above type very explicit; the study of their monodromy part and proof of the monodromy conjecture can be found in~\cite{MVV}. In other words, these two articles together solve the conjecture for an interesting class of binomial ideals in arbitrary dimension. \\

The article is organized as follows. We assume $\n$ to be the set of non-negative integers. We begin in Section~\ref{SpaceCurve} with introducing the curves $Y \subset \co^{g+1}$ in which we are interested. Section~\ref{JetMotivic} consists of a brief discussion of the jet schemes and motivic zeta function associated with an affine variety. In Section~\ref{Jets}, we determine the irreducible components of the jet schemes $Y_m$ for $m\in \n$ and show that the induced family on the jet schemes is not flat for most $m$. Based on the structure of $Y_m$, we compute the motivic zeta function of $Y$, find its $g+1$ candidate poles, and provide some examples in Section~\ref{Motivic}. Finally, in Section~\ref{PolesMotivic}, we prove that all candidate poles are actual poles. 

\section{Space monomial curves with plane semigroups}\label{SpaceCurve}

In this section, we introduce the type of singularities that we will consider in this article. They arise as (equisingular) deformations of germs of irreducible plane curves. We begin with an irreducible series $f\in \co[[x_0,x_1]]$ in two variables over the complex numbers satisfying $f(0)=0$. We denote by $\C:=\{f=0\}\subset (\co^2,0)$ the germ at the origin of the curve defined by $f.$ We can assume, modulo a change of variables, that the curve $\{x_0=0\}$ is transversal to $\C$ and that the curve $\{x_1=0\}$ has maximal contact (among smooth curves) with $\C.$ To $\C,$ one can relate a valuation \[\nu_\C: \frac{\co[[x_0,x_1]]}{(f)} \setminus \{0\} \longrightarrow \n: h \mapsto \dim_\co \frac{\co[[x_0,x_1]]}{(f,h)}.\] We denote by $\Gamma(\C)$ the \emph{semigroup associated with} $\nu_\C$: \[\Gamma(\C):=\left\{\nu_\C(h)~\Big\vert~ h \in \frac{\co[[x_0,x_1]]}{(f)} \setminus \{0\}\right\}\subset \n.\] Then, $\Gamma(\C)$ is a finitely generated semigroup with which we can associate the following data~\cite[Chapter II]{Z}: 
\begin{enumerate}
	\item the unique minimal system of generators $(\bar{\beta}_0,\ldots,\bar{\beta}_g)$ of $\Gamma(\C)$ with ${\lbeta_0 < \cdots < \lbeta_g}$ and $\gcd(\lbeta_0, \ldots, \lbeta_g) = 1$ (gcd being the greatest common divisor);
	\item the integers $e_i:=\gcd(\bar{\beta}_0,\ldots,\bar{\beta}_i)$ for $i=0,\ldots,g$, where $e_0 = \lbeta_0, e_g=1$ and $e_0 > \cdots > e_g$; and
	\item the integers $n_i:=\frac{e_{i-1}}{e_i}\geq 2$ for $i=1,\ldots,g.$
\end{enumerate} 
It can be shown that the integer $n_i\bar{\beta}_i$ for $i=1,\ldots,g$ belongs to the semigroup generated by $\bar{\beta}_0,\ldots,\bar{\beta}_{i-1}$, see for instance~\cite{A} or~\cite[Lemma 2.2.1]{T1}. Hence, for $i = 1, \ldots, g,$ there exists a unique system of non-negative integers $\{b_{ij}\}_{ 0 \leq j < i}$ such that \[b_{ij}<n_j ~~\text{for}~~j\not=0 ~~~\text{and}~~~ n_i\bar{\beta}_i=b_{i0}\bar{\beta}_0+\cdots +b_{i(i-1)}\bar{\beta}_{i-1};\] the uniqueness follows from the inequalities $b_{ij}<n_j$. For notational reasons, we introduce $n_0:=b_{10}$. Additionally, one can show that $n_i\lbeta_i < \lbeta_{i+1}$ for all $i = 1, \ldots, g-1$. It is also worth noting that $e_i = n_{i+1}\cdots n_g$ for $i = 0, \ldots, g-1$, that $n_0 > n_1 \geq 2$, and that $n_0 = \frac{\lbeta_1}{e_1}$ and $n_1 = \frac{\lbeta_0}{e_1}$ are coprime. Furthermore, one can choose a \emph{system of approximate roots} or a \emph{minimal generating sequence} $(x_0, \ldots, x_g)$ of $\nu_{\C}$, where $x_i \in \co[[x_0,x_1]]$ such that $\nu_{\C}(x_i)=\bar{\beta}_i$ for $i=0,\ldots,g$, see for example~\cite{AM},~\cite{M2},~\cite{S} or~\cite{T1}. For $i=0$ and $i=1$, the condition $\nu_{\C}(x_i)=\bar{\beta}_i$ is equivalent to the assumptions that we put above on the variables $x_0$ and $x_1$, respectively. These elements satisfy identities of the form \[vx_{i+1}=x_i^{n_i}-c_ix_0^{b_{i0}}\cdots x_{i-1}^{b_{i(i-1)}}- \sum_{\gamma=(\gamma_0,\ldots,\gamma_i)} c_{i,\gamma }vx_0^{\gamma_0}\cdots x_i^{\gamma_i}, \qquad  i = 0, \ldots, g,\] where $v=1, x_{g+1}=0, c_i \in \co\setminus \{0\},  c_{i,\gamma }\in \co,$ $0\leq \gamma_j <n_j$ for $1\leq j \leq i$, and $\sum_{j=0}^i \gamma_j\bar{\beta}_{j}>n_i\bar{\beta}_{i}.$ \\

The above equations with $v = 1$ allow us to embed $\C$ as a complete intersection in $(\co^{g+1},0)$ with coordinates $x_0,\ldots,x_g$. Making $v$ vary in $(\co,0)$ defines a family $(\chi,0)\subset (\co^{g+1}\times \co,0)$ of germs of curves, which is \emph{equisingular} for instance in the sense that all branches in the family have the same semigroup $\Gamma(C)$. We denote by $\eta:(\chi,0)\rightarrow (\co,0)$ the restriction to $(\chi,0)$ of the projection on the second factor $(\co^{g+1}\times \co,0)\rightarrow (\co,0).$ The special fiber $Y:=\eta^{-1}(0)$ is the curve which is of interest to us and is defined by the equations 
\begin{equation} \label{EquationsY}
	\left\{ 
	\begin{array}{r c l l}
	f_1:= x_1^{n_1} & - & c_1x_0^{n_0}  &  = 0 \\
	f_2:= x_2^{n_2}  &- & c_2x_0^{b_{20}}x_1^{b_{21}} &= 0  \\
	& \vdots & &\\
	f_g := x_g^{n_g} &- & c_gx_0^{b_{g0}}x_1^{b_{g1}}\cdots x_{g-1}^{b_{g(g-1)}} & = 0.\\
    \end{array}
 	\right.
\end{equation}

After a change of variables in the coordinates $x_0,\ldots,x_g,$ we can assume that every $c_i$ for $i=1,\ldots,g$ is equal to $1;$ the coefficients $c_i$ are important to see that any irreducible plane curve is a (equisingular) deformation of a curve of type $Y.$ For simplicity, throughout this article, we will consider $c_i=1$ for $i=1,\ldots,g.$ 

\begin{rem}
It is worth mentioning that the above embedding of $\C$ in $(\co^{g+1},0)$ as a complete intersection is Newton non-degenerate in the sense of~\cite{AGS} and~\cite{Te1}. Such an embedding always exists for a singularity in characteristic $0$~\cite{Te2}, and is conjectured to exist in positive characteristic~\cite{T2}.
\end{rem}

The curve $Y$ is called the \emph{monomial curve associated with} $\C$ because it is the image in $(\co^{g+1},0)$ of the monomial map $M:(\co,0) \rightarrow (\co^{g+1},0)$ given by \[M(t)=(t^{\bar{\beta}_0},t^{\bar{\beta}_1},\ldots,t^{\bar{\beta}_g}).\] In particular, $Y$ is an irreducible (germ of a) curve with its semigroup equal to the \lq plane\rq\ semigroup $\Gamma(\C)$, see~\cite{T1} for these and other properties of $Y$. Finally, note that, even though $Y$ has been defined as a deformation of a germ, we can consider the global curve in $\co^{g+1}$ defined by the above polynomial (actually, binomial) equations of $Y$. This is still an irreducible curve, and from now on, we will denote by $Y \subset \co^{g+1}$ this global curve and refer to it as a \emph{(space) monomial curve}.

\section{Jet schemes and motivic Igusa zeta function}\label{JetMotivic}

This section provides a short introduction to the jet schemes and the motivic Igusa zeta function associated with an affine variety. By a \emph{(complex) variety}, we mean a reduced, separated scheme of finite type over $\co$, which is not necessarily irreducible. Let $X=V(I)\subset \co^{g+1}$ be an affine variety defined by an ideal $I=(f_1,\ldots,f_r)\subset \co[x_0,\ldots,x_g]$.\\

For every $m \in \mathbb{N}$, the \emph{$m$th jet scheme} of $X$ is the $\co$-scheme $X_m$ whose $\co$-points are \[ X_m(\co)=\{\text{Spec}(\co[t]/(t^{m+1}))\longrightarrow X\}.\] It immediately follows that $X_0=X.$ For general $m$, one can derive the defining equations of $X_m$ in its natural ambient space $\co^{(g+1)(m+1)}$ as follows. Let $x_i^{(j)}$ for $i=0,\ldots,g$ and $j=0,\ldots,m$ be the coordinates in $\co^{(g+1)(m+1)}$. We will denote by $\underline{x}^{(j)}$ the $(g+1)$-tuple $(x_0^{(j)},\ldots,x_g^{(j)})$ and by $\underline{x}(t)$ the element 
\begin{align*}
	\underline{x}(t)&:=\underline{x}^{(0)}+\underline{x}^{(1)}t+\cdots +\underline{x}^{(m)}t^m \\
	& :=(x_0^{(0)}+x_0^{(1)}t+\cdots +x_0^{(m)}t^m,\ldots,x_g^{(0)}+x_g^{(1)}t+\cdots +x_g^{(m)}t^m)
\end{align*}
in $\co[\underline{x}^{(j)};j=0,\ldots,m][t]/(t^{m+1})$. For $k=1,\ldots,r$ and $l=0,\ldots,m,$ let $F_k^{(l)}\in \co[\underline{x}^{(j)};$ $j=0,\ldots,l]$ be the elements which satisfy the identity 
\begin{equation} \label{IdentityF}
	f_k(\underline{x}(t))=F_k^{(0)}+F_k^{(1)}t+\cdots+F_k^{(m)}t^m~\text{mod}~(t^{m+1}). 
\end{equation}
Then, we have \[X_m=\text{Spec}\frac{\co[\underline{x}^{(j)};j=0,\ldots,m]}{(F_k^{(l)};k=1,\ldots,r;~l=0,\ldots,m)}.\]  For $m,p \in \n$ with $m\geq p,$ there is a natural map $\pi_{m,p}:X_m\rightarrow X_p$ induced by the truncation map $\co[t]/(t^{m+1})\rightarrow \co[t]/(t^{p+1}).$ We will put $\pi_m$ for $\pi_{m,0}.$ Note that for $m,p,q \in \n$ with $m\geq p\geq q,$ we have $\pi_{p,q}\circ\pi_{m,p}=\pi_{m,q}.$ \\

In order to define the motivic zeta function associated with $X$, we first give a brief introduction to the Grothendieck ring of complex varieties and fix some notation. Let $\text{Var}_{\co}$ be the category of complex varieties. The \emph{Grothendieck group} $K_0(\text{Var}_{\co})$ is the abelian group generated by the symbols $[V]$ for $V \in \text{Var}_{\co}$ with the following two relations: $[V] = [W]$ for isomorphic $V$ and $W$, and $[V] = [W] + [V \setminus W]$ for $W$ closed in $V$. By using the multiplication $[V]\cdot [W] := [V \times W]$, the Grothendieck group becomes a commutative ring with $1 := [\text{Spec}~\co]$ as unit element, and we still denote the \emph{Grothendieck ring} by $K_0(\text{Var}_{\co})$. We write $\Lmb := [\co]$ for the class of the affine line and $\mathcal{M}_{\co} := K_0(\text{Var}_{\co})[\Lmb^{-1}]$ for the ring obtained by inverting $\Lmb$. 

\begin{rem}\label{RemGrothRing}
For a constructible subset $W$ of a variety $V$ (i.e., $W$ is a finite union of locally closed subvarieties of $V$), we can define its class in $K_0(\text{Var}_{\co})$ as follows. First, we can always write $W$ as a finite \emph{disjoint} union $W_1 \sqcup \cdots \sqcup W_r$ of locally closed subvarieties of $V$. Then, one can show that $[W] := \sum_{i=1}^r [W_i]$ is well-defined as element in $K_0(\text{Var}_{\co})$. In particular, this definition implies for a locally trivial fibration $p:V \rightarrow B$ with fiber $F$ that $[V] = [B]\cdot[F]$.
\end{rem}

Note that for every $m \in \n$, a point $\gamma \in \text{Spec}~\co[\underline{x}^{(j)}; j = 0, \ldots, m]$ corresponds to a jet $\gamma(t) = (\gamma_0(t), \ldots, \gamma_g(t)) \in (E[t]/(t^{m+1}) )^{g+1}$ for some field extension $E$ of $\co$. We will often also denote this jet by $\gamma := \gamma(t)$. Hence, we can define \[ord_t (f_k(\gamma)) := ord_t(f_k(\gamma(t))),\] for $k=1, \ldots, g$, and \[\mathcal{X}_m  := \Big\{\gamma \in \text{Spec}~\co[\underline{x}^{(j)}; j = 0, \ldots, m] ~\Big\vert~ \min_{k=1, \ldots, r} ord_t(f_k(\gamma))=m\Big\}.\] For each $m$, the set $\mathcal{X}_m$ is a locally closed subvariety of $\text{Spec}~\co[\underline{x}^{(j)}; j = 0, \ldots, m]$, and it thus defines a class $[\mathcal{X}_m]$ in the Grothendieck ring. \\

With this notation, the \emph{(global) motivic Igusa zeta function} associated with the variety $X$ (or with the ideal $I$) is the formal power series \[Z^{mot}_X(T) := \Lmb^{-(g+1)}\sum_{m\geq 0}[\mathcal{X}_m](\Lmb^{-(g+1)}T)^m \in \mathcal{M}_{\co}[[T]].\] There is also a local version where $\mathcal{X}_m$ is replaced by $\mathcal{X}_{m,0}$ consisting of those $\gamma \in \mathcal{X}_m$ with $x_i^{(0)} = 0$ for $i = 0, \ldots, g$ or, equivalently, with associated jet $\gamma(t)$ having the origin $0$ as center (i.e., $\gamma(0)=0$). For $X$ defined by one polynomial (i.e., $I = (f)$), the motivic zeta function was introduced and shown to be rational by Denef and Loeser in~\cite{DL2}. The definition for general ideals can be found in~\cite{VZ}. \\ 

To find the motivic zeta function $Z^{mot}_Y(T)$ associated with a space monomial curve $Y \subset \co^{g+1}$, we will not compute the above series directly. Instead, we will compute the Poincar\'e series \[J_Y(T) := \sum_{m \geq 0}[Y_m](\Lmb^{-(g+1)}T)^{m+1},\] where $[Y_m] \subset \co^{(g+1)(m+1)}$ is the $m$th jet scheme of $Y$. This is well-defined because of the fact that $[Y_m] = [(Y_m)_{red}]$ in $K_0(\text{Var}_{\co})$. Using the relations $[\mathcal{Y}_0] = \Lmb^{g+1}- [Y_0]$ and $[\mathcal{Y}_m] =  \Lmb^{g+1}[Y_{m-1}]-[Y_m]$ for $m \geq 1$, it is not hard to see that the Poincar\'e series is related to $Z^{mot}_Y(T)$ by \[Z^{mot}_Y(T) = 1-\frac{1-T}{T}J_Y(T).\]

\begin{rem}
One often considers the more natural Poincar\'e series $\sum_{m \geq 0} [Y_m]T^m$. We choose to work with the above series $J_Y(T)$ because the factor $\Lmb^{-(g+1)(m+1)}$ implies, in some sense, that we need to look for the codimension of $Y_m$ in $\co^{(g+1)(m+1)}$.
\end{rem}

\section{Jet schemes of space monomial curves whose semigroup is plane} \label{Jets}

In this section, we will study the jet schemes $Y_m$ of a space monomial curve $Y\subset \co^{g+1}$ defined in Section~\ref{SpaceCurve}. The information is mainly concentrated in the fibers $\pi_m^{-1}(0)$ of $\pi_m:Y_m\rightarrow Y$ for $m \geq 1$; indeed, $Y_0 = Y$ and the restriction of $\pi_m$ to $\pi_m^{-1}(Y\setminus \{0\})$ for $m\geq 1$ is a trivial fibration whose fibers are isomorphic to $\co^{m}$. Furthermore, since such a monomial curve for $g = 1$ is a plane curve $Y \subset \co^2$ with one Puiseux pair for which the structure of $\pi^{-1}_m(0)$ has been studied in~\cite[Corollary 4.4]{M1}, we will be concerned with the case $g \geq 2$. To determine the irreducible components of the fibers $\pi^{-1}_m(0)$ with their unique reduced subscheme structure, we first consider the reduced structure $\pi_m^{-1}(0)_{red}$. In the next proposition, we see that $\pi_m^{-1}(0)_{red}$ for $1 \leq m\leq n_0n_1$ is irreducible and rather easy to understand, where we denote the integer part of a rational number $\frac{a}{b}$ by $[\frac{a}{b}]$.

\begin{prop}\label{PropStructureBasis}
\begin{enumerate}[wide, labelindent=0pt]
 	\item For $m\in \n$ satisfying $0<m< n_0n_1$, we have \[\pi_m^{-1}(0)_{red}=\text{Spec} \frac{\co[\underline{x}^{(j)};j=0,\ldots,m]}{(x_i^{(0)},\ldots,x_i^{([\frac{m}{n_i}])};i=0,\ldots,g)}.\]
 	\item The fiber $\pi_{n_0n_1}^{-1}(0)_{red}$ is given by \[\text{Spec} \frac{\co[\underline{x}^{(j)};j=0,\ldots,n_0n_1]}{(x_0^{(0)},\ldots,x_0^{(n_1-1)}, x_1^{(0)}, \ldots, x_1^{(n_0-1)},{x_1^{(n_0)}}^{n_1}-{x_0^{(n_1)}}^{n_0},x_i^{(0)},\ldots,x_i^{([\frac{n_0n_1}{n_i}])};i=2,\ldots,g)}.\]
\end{enumerate}
\end{prop}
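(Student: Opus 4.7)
The plan is to prove equality of reduced schemes by a double inclusion. Write $Z_m$ for the scheme appearing on the right-hand side in each part. First, I would note that $Z_m$ is itself reduced: in part (1) it is an affine space, and in part (2) the only non-linear generator $(x_1^{(n_0)})^{n_1}-(x_0^{(n_1)})^{n_0}$ is irreducible because $\gcd(n_0,n_1)=1$.

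For the inclusion $Z_m\subseteq \pi_m^{-1}(0)_{\mathrm{red}}$, I would substitute a jet $\gamma(t)=(x_0(t),\ldots,x_g(t))$ satisfying the $Z_m$-equations into each $f_k$. The defining vanishings force $\mathrm{ord}_t\,x_i(t)\ge [m/n_i]+1$, with the boundary exception $\mathrm{ord}_t\,x_0=n_1$, $\mathrm{ord}_t\,x_1=n_0$ in part (2), where the extra binomial ties the leading coefficients. This gives $\mathrm{ord}\,x_i^{n_i}>m$ at once. The non-trivial check is that each monomial $x_0^{b_{i0}}\cdots x_{i-1}^{b_{i,i-1}}$ also has order strictly greater than $m$; for this I would establish the combinatorial lemma
\[\sum_{j=0}^{i-1}\frac{b_{ij}}{n_j}\ge 1 \qquad (i=1,\ldots,g),\]
with strict inequality for $i\ge 2$. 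Writing the sum as $\sum (b_{ij}\bar\beta_j)/(n_j\bar\beta_j)$ and minimising by concentrating all mass on the index $j$ maximising $n_j\bar\beta_j$, one obtains the lower bound $(n_i\bar\beta_i)/(n_{i-1}\bar\beta_{i-1})$; this ratio equals $1$ precisely when $i=1$ (from $n_0\bar\beta_0=n_1\bar\beta_1$) and exceeds $1$ for $i\ge 2$ (from $n_{i-1}\bar\beta_{i-1}<\bar\beta_i\le n_i\bar\beta_i$). Finally, in part (2) the coefficient of $f_1$ at $t^{n_0n_1}$ vanishes precisely by $(x_1^{(n_0)})^{n_1}=(x_0^{(n_1)})^{n_0}$.

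For the reverse inclusion $\pi_m^{-1}(0)_{\mathrm{red}}\subseteq Z_m$, I would argue by induction on $i$ that every $\mathbb{C}$-point $\gamma$ of $\pi_m^{-1}(0)$ has $\alpha_i:=\mathrm{ord}_t\,x_i(t)$ in the range prescribed by $Z_m$. The base case $i\in\{0,1\}$ exploits $x_1^{n_1}\equiv x_0^{n_0}\pmod{t^{m+1}}$: either both sides vanish modulo $t^{m+1}$, giving the bounds directly, or their lowest-order terms match, forcing $n_0\alpha_0=n_1\alpha_1$; coprimality of $n_0$ and $n_1$ then yields $\alpha_0\ge n_1$ and $\alpha_1\ge n_0$, hence $n_0\alpha_0\ge n_0n_1$. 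In part (1) the hypothesis $m<n_0n_1$ upgrades this to $\alpha_0\ge[m/n_0]+1$ and $\alpha_1\ge[m/n_1]+1$; in part (2) the boundary case $\alpha_0=n_1,\alpha_1=n_0$ is actually attained, and matching the $t^{n_0n_1}$-coefficients of $x_1^{n_1}$ and $x_0^{n_0}$ yields exactly the binomial relation. For the inductive step $i\ge 2$, the bounds on $\alpha_0,\ldots,\alpha_{i-1}$ together with the strict version of the combinatorial lemma give $\mathrm{ord}(x_0^{b_{i0}}\cdots x_{i-1}^{b_{i,i-1}})>m$, so $\mathrm{ord}\,x_i^{n_i}>m$ and $\alpha_i\ge[m/n_i]+1$.

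I expect the main obstacle to be the bookkeeping in part (2). In the boundary case one only has $\alpha_j\ge m/n_j$ for $j=0,1$, not the strict integer bound, so one must use the \emph{strict} inequality $\sum b_{ij}/n_j>1$ for $i\ge 2$ to nevertheless recover the full bound $\alpha_i\ge[m/n_i]+1$ at all higher indices. Proving this strict form of the combinatorial lemma, and tracing carefully through the delicate base-case analysis at the threshold $m=n_0n_1$, is the technical heart of the argument.
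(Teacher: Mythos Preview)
Your proposal is correct, and the overall architecture---double inclusion, case split on whether the leading terms of $x_0^{n_0}$ and $x_1^{n_1}$ survive modulo $t^{m+1}$, and an order estimate on the monomial part of each $f_i$---matches the paper. The difference lies in the combinatorial lemma you use for that estimate. The paper proves the sharper fact $b_{i0}>n_0$ for $i\ge 2$ (its Lemma~\ref{LemmaStructureBasis}); with this in hand one bounds $\mathrm{ord}_t\big(\gamma_0^{b_{i0}}\cdots\gamma_{i-1}^{b_{i(i-1)}}\big)\ge (n_0+1)\big([m/n_0]+1\big)\ge m+1$ using only the $x_0$-factor, so no induction on $i$ is required. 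Your lemma $\sum_{j<i} b_{ij}/n_j>1$ is strictly weaker as a hypothesis but still sufficient; its proof via the monotone sequence $n_j\bar\beta_j$ is arguably cleaner than the paper's contradiction argument for $b_{i0}>n_0$, at the cost of needing the inductive bounds on all $\alpha_0,\dots,\alpha_{i-1}$ rather than just $\alpha_0$. The paper also outsources the base case $i\in\{0,1\}$ in part~(1) to \cite[Prop.~4.1]{M1}, whereas you redo it directly from the coprimality of $n_0,n_1$; both are fine. Your identification of the delicate point---that at $m=n_0n_1$ one only has $\alpha_0\ge m/n_0$, $\alpha_1\ge m/n_1$ with possible equality, so the \emph{strict} inequality for $i\ge 2$ is essential---is exactly right.
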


The following lemma is used in the proof of Proposition~\ref{PropStructureBasis}. 

\begin{lem}\label{LemmaStructureBasis}
For $i=2,\ldots,g,$ we have $b_{i0}>n_0.$
\end{lem}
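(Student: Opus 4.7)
The plan is to start from the syzygy relation $n_i\lbeta_i = b_{i0}\lbeta_0 + \sum_{j=1}^{i-1}b_{ij}\lbeta_j$ and to extract a lower bound on $b_{i0}\lbeta_0$ by bounding the tail sum from above. The two ingredients that make this work are both already stated in Section~\ref{SpaceCurve}: the inequalities $b_{ij} < n_j$ for $j \geq 1$ (from uniqueness of the expansion of $n_i\lbeta_i$), and the strict inequalities $n_j\lbeta_j < \lbeta_{j+1}$ for $j = 1, \ldots, g-1$.

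First I would observe that these two inputs telescope. Since $(n_j - 1)\lbeta_j = n_j\lbeta_j - \lbeta_j < \lbeta_{j+1} - \lbeta_j$ whenever $1 \leq j \leq g-1$, summing over $j = 1, \ldots, i-1$ gives
$$\sum_{j=1}^{i-1} b_{ij}\lbeta_j \;\leq\; \sum_{j=1}^{i-1}(n_j - 1)\lbeta_j \;<\; \lbeta_i - \lbeta_1.$$
Plugging this into the syzygy relation and using $n_i \geq 2$ yields the intermediate estimate
$$b_{i0}\lbeta_0 \;>\; (n_i - 1)\lbeta_i + \lbeta_1 \;\geq\; \lbeta_i.$$

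The final step is to compare $\lbeta_i$ with $n_0\lbeta_0$. From $\lbeta_0 = n_1 e_1$ and $\lbeta_1 = n_0 e_1$ one reads off the key identity $n_1\lbeta_1 = n_0\lbeta_0$. Combined with the strict inequality $\lbeta_2 > n_1\lbeta_1$ and the monotonicity $\lbeta_i \geq \lbeta_2$ (valid precisely because $i \geq 2$), this gives $\lbeta_i \geq \lbeta_2 > n_0\lbeta_0$. Chaining with the previous step produces $b_{i0}\lbeta_0 > n_0\lbeta_0$, and dividing by $\lbeta_0 > 0$ gives $b_{i0} > n_0$, as required.

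There is no real obstacle in this argument; the only point requiring a moment of care is to ensure the telescoping inequality is strict, which is automatic as soon as $i \geq 2$ because the sum then contains at least one term bounded by a strict inequality $n_j\lbeta_j < \lbeta_{j+1}$. One could also imagine trying to derive the bound from the monomial parametrization $M(t) = (t^{\lbeta_0}, \ldots, t^{\lbeta_g})$, but since the relation $n_i\lbeta_i = \sum_j b_{ij}\lbeta_j$ is exactly what the parametrization encodes, the numerical argument above is the most direct route.
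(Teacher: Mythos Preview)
Your proof is correct and slightly more direct than the paper's. The paper argues by contradiction: assuming $b_{i0}\leq n_0$, it bounds $\sum_{j=0}^{i-1}b_{ij}\lbeta_j$ above by $n_0\lbeta_0+\sum_{j=1}^{i-1}(n_j-1)\lbeta_j$, and then shows this is strictly less than $n_i\lbeta_i$ by a ``repeated halving'' chain
\[
n_i\lbeta_i > n_{i-1}\lbeta_{i-1}+n_{i-1}\lbeta_{i-1} > n_{i-1}\lbeta_{i-1}+n_{i-2}\lbeta_{i-2}+n_{i-2}\lbeta_{i-2} > \cdots,
\]
using $n_l\geq 2$ at each step. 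You instead isolate $b_{i0}\lbeta_0$ directly, bound the tail by the telescoping $\sum_{j=1}^{i-1}(n_j-1)\lbeta_j<\lbeta_i-\lbeta_1$, and then finish with the single observation $\lbeta_i\geq\lbeta_2>n_1\lbeta_1=n_0\lbeta_0$. Both routes use exactly the same three inputs from Section~\ref{SpaceCurve} ($b_{ij}<n_j$, $n_j\lbeta_j<\lbeta_{j+1}$, $n_j\geq 2$); your telescoping packaging is a bit shorter and avoids the contradiction wrapper, while the paper's chain makes the role of $n_l\geq 2$ more visibly iterative.
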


\begin{proof}
Fix $i \in \{2, \ldots, g\}$. We prove the inequality in the lemma by contradiction; assume that $b_{i0}\leq n_0.$ On the one hand, we have \[b_{i0}\bar{\beta}_0+b_{i1}\bar{\beta}_1+\cdots+b_{i(i-1)}\bar{\beta}_{i-1}\leq n_0\bar{\beta}_0+n_1\bar{\beta}_1+\cdots+n_{i-1}\bar{\beta}_{i-1}-(\bar{\beta}_1 +\cdots+\bar{\beta}_{i-1}),\]
where we used that $b_{ij}\leq n_j-1$ for $j=1,\ldots,i-1$. On the other hand, by repeatedly using that $\bar{\beta}_l>n_{l-1}\bar{\beta}_{l-1}$ and that $n_l\geq 2$, we find that 
\begin{align*}
	n_i\bar{\beta}_i & > n_i(n_{i-1}\bar{\beta}_{i-1}) \\
	&\geq n_{i-1}\bar{\beta}_{i-1}+n_{i-1}\bar{\beta}_{i-1} \\
	&> n_{i-1}\bar{\beta}_{i-1}+n_{i-1}(n_{i-2}\bar{\beta}_{i-2}) \\
	&\geq n_{i-1}\bar{\beta}_{i-1}+n_{i-2}\bar{\beta}_{i-2}+n_{i-2}\bar{\beta}_{i-2} \\
	& ~~ \vdots \\
	& \geq n_{i-1}\bar{\beta}_{i-1}+n_{i-2}\bar{\beta}_{i-2}+\cdots+n_{2}\bar{\beta}_{2}+n_{1}\bar{\beta}_{1} + n_{1}\bar{\beta}_{1} \\
	& = n_{i-1}\bar{\beta}_{i-1}+n_{i-2}\bar{\beta}_{i-2}+\cdots+n_{2}\bar{\beta}_{2}+n_{1}\bar{\beta}_{1}+n_0\bar{\beta}_0,
\end{align*}
where the last equality follows from $n_0\bar{\beta}_0=b_{10}\bar{\beta}_0=n_1\bar{\beta}_1.$  The two families of inequalities give
\begin{align*}
	b_{i0}\bar{\beta}_0+b_{i1}\bar{\beta}_1+\cdots+b_{i(i-1)}\bar{\beta}_{i-1} & \leq n_0\bar{\beta}_0+n_1\bar{\beta}_1+\cdots+n_{i-1}\bar{\beta}_{i-1}-(\bar{\beta}_1 +\cdots+\bar{\beta}_{i-1}) \\
	&< n_0\bar{\beta}_0+n_1\bar{\beta}_1+\cdots+n_{i-1}\bar{\beta}_{i-1} \\
	&< n_i\bar{\beta}_i.
\end{align*}
This contradicts the equality $b_{i0}\bar{\beta}_0+b_{i1}\bar{\beta}_1+\cdots+b_{i(i-1)}\bar{\beta}_{i-1}=n_i\bar{\beta}_i$ from Section~\ref{SpaceCurve}.
\end{proof}
 
\begin{proof} [Proof of proposition~\ref{PropStructureBasis}] 
We begin with proving the first part; assume that ${0<m<n_0n_1}$. Recall that $f_1,\ldots,f_g$ are the defining equations of $Y$ given in~\eqref{EquationsY}, and that a closed point $\gamma \in \text{Spec}~\co[\underline{x}^{(j)};j=0,\ldots,m]$ corresponds to a jet that we also call $\gamma=\gamma(t)=(\gamma_0(t),\ldots,\gamma_g(t))$ with \[\gamma_i(t)=\sum\limits_{l=0}^mx_i^{(l)}t^l,\] where $x_i^{(l)}$ are the coordinates of $\gamma.$ With this notation, $\gamma \in \pi_m^{-1}(0)_{red} $ if and only if the coordinates $x_i^{(0)}$ for $i=0,\ldots,g$ are zero (i.e., the center of $\gamma$ is the origin) and $\text{ord}_t(f_k(\gamma))\geq m+1$ for $k=1,\ldots,g.$ From Proposition 4.1 in~\cite{M1} applied to the curve given by the equation $x_1^{n_1} -  x_0^{n_0}=0,$ we find that \[x_i^{(l)} = 0~, \qquad i=0,1~;~l=0,\ldots,\Big[\frac{m}{n_i}\Big].\] Taking into consideration that $m<n_0n_1$, one can see that these conditions are the only conditions coming from the equation $f_1 = x_1^{n_1} -  x_0^{n_0}.$ In particular, assuming that the center of $\gamma$ is the origin, the condition $\text{ord}_t(f_1(\gamma))\geq m+1$ implies that $\text{ord}_t(\gamma_0)\geq [\frac{m}{n_0}]+1.$ We will prove that $\text{ord}_t(\gamma_0)\geq [\frac{m}{n_0}]+1$ in turn implies for every $i=2,\ldots,g$ that $\text{ord}_t(f_i(\gamma))\geq m+1$ is equivalent to $\text{ord}_t(\gamma_i)\geq [\frac{m}{n_i}]+1.$ Recall that $f_i=x_i^{n_i}-x_0^{b_{i0}}\cdots x_{i-1}^{b_{i(i-1)}}.$ Since, by Lemma~\ref{LemmaStructureBasis}, we have $b_{i0}>n_0$ and since $\text{ord}_t(\gamma_0)\geq [\frac{m}{n_0}]+1,$ we obtain 
\begin{align*}
	\text{ord}_t(x_0^{b_{i0}}\cdots x_{i-1}^{b_{i(i-1)}}(\gamma)) = \text{ord}_t(\gamma_0^{b_{i0}}\cdots \gamma_{i-1}^{b_{i(i-1)}}) &\geq \text{ord}_t(\gamma_0^{b_{i0}}) \\
	& \geq\text{ord}_t(\gamma_0^{n_0+1}) \\
	& \geq (n_0+1)\Big(\Big[\frac{m}{n_0}\Big]+1\Big) \\
	& \geq m+1.
\end{align*}
Hence, $\text{ord}_t(f_i(\gamma))\geq m+1$ if and only if $\text{ord}_t(x_i^{n_i}(\gamma))=\text{ord}_t(\gamma_i^{n_i})\geq m+1.$ As the latter is equivalent to $\text{ord}_t(\gamma_i)\geq [\frac{m}{n_i}]+1$, this ends the proof of the first part. \\

We now prove the second part of the proposition. Let $\gamma\in \text{Spec} ~\co[\underline{x}^{(j)};j=0,\ldots,n_0n_1]$ be again associated with a $(g+1)$-tuple $\gamma=\gamma(t).$ It follows from the first part that $\gamma\in \pi_{n_0n_1}^{-1}(0)_{red}$ implies that \[x_i^{(l)} = 0 ~, \qquad i=0,\ldots,g~;l=0,\ldots,\Big[\frac{n_0n_1-1}{n_i}\Big].\] Noticing that $f_1$ is weighted homogeneous of degree $n_0n_1$ if we give $x_0$ the weight $n_1$ and $x_1$ the weight $n_0$, we can write 
\begin{align*}
 	f_1(\gamma) & \equiv f_1(t^{n_1}(x_0^{(n_1)}+x_0^{(n_1+1)}t+\cdots+x_0^{(n_0n_1)}t^{n_0n_1-n_1}), \\
 	& \phantom{{}=f_1(}t^{n_0}(x_1^{(n_0)}+x_1^{(n_0+1)}t+\cdots+x_1^{(n_0n_1)}t^{n_0n_1-n_0}))\\
 	& \equiv t^{n_0n_1}f_1(x_0^{(n_1)}+x_0^{(n_1+1)}t+\cdots+x_0^{(n_0n_1)}t^{n_0n_1-n_1}, \\
 	& \phantom{{} = t^{n_0n_1}f_1(} x_1^{(n_0)}+x_1^{(n_0+1)}t+\cdots+x_1^{(n_0n_1)}t^{n_0n_1-n_0})\\
 	& \equiv t^{n_0n_1}f_1(x_0^{(n_1)},x_1^{(n_0)}) \\
 	& \equiv t^{n_0n_1}({x_1^{(n_0)}}^{n_1}-{x_0^{(n_1)}}^{n_0})~~~\text{mod}~~~(t^{n_0n_1+1}).
\end{align*}
Thus, modulo that $\gamma$ is centered at the origin and that $\text{ord}_t(f_1(\gamma))\geq n_0n_1,$ the condition $\text{ord}_t(f_1(\gamma))\geq n_0n_1+1$ is equivalent to ${x_1^{(n_0)}}^{n_1}-{x_0^{(n_1)}}^{n_0}=0.$ For $i = 2, \ldots, g$, again modulo that $\gamma$ is centered at the origin and that $\text{ord}_t(f_i(\gamma))\geq n_0n_1$, one can now see as in the first part of the proof that $\text{ord}_t(f_i(\gamma))\geq n_0n_1+1$ if and only if $x_i^{[\frac{n_0n_1}{n_i}]}=0.$ This proves the second part of the proposition.
\end{proof}

The same reasoning as in the proof of Proposition~\ref{PropStructureBasis} gives us the following corollary.

\begin{cor}\label{CorStructureBasis}
 Let $l\in \n$. For $m \in \n$ such that $ln_0n_1<m< (l+1)n_0n_1$, we have \[\pi_{m,ln_0n_1}^{-1}(\{x_0^{(0)}=x_0^{(1)}=\cdots=x_0^{(ln_1)}=0\})_{red}=\text{Spec} \frac{\co[\underline{x}^{(j)};j=0,\ldots,m]}{(x_i^{(0)},\ldots, x_i^{([\frac{m}{n_i}])};i=0,\ldots,g)}.\] The ideal defining the embedding of $\pi_{(l+1)n_0n_1,ln_0n_1}^{-1}(\{x_0^{(0)}=x_0^{(1)}=\cdots=x_0^{(ln_1)}=0\})_{red}$ in the affine space $\text{Spec}~ \co[\underline{x}^{(j)};j=0,\ldots,(l+1)n_0n_1]$ is generated by \[x_0^{(0)},\ldots,x_0^{((l+1)n_1-1)}, x_1^{(0)},\ldots,x_1^{((l+1)n_0-1)}, {x_1^{((l+1)n_0)}}^{n_1}-{x_0^{((l+1)n_1)}}^{n_0},\] \[x_i^{(0)},\ldots,x_i^{([\frac{(l+1)n_0n_1}{n_i}])}; i=2,\ldots,g.\]
\end{cor}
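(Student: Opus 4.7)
The plan is to proceed in the same spirit as the proof of Proposition~\ref{PropStructureBasis}, with weighted homogeneity of $f_1$ and Lemma~\ref{LemmaStructureBasis} for the higher $f_i$ doing the bulk of the work. Let $\gamma=(\gamma_0(t),\ldots,\gamma_g(t))$ with $\gamma_i(t)=\sum_{j=0}^m x_i^{(j)}t^j$ be a closed point of the scheme we wish to describe, so $\gamma\in Y_m$ and $\text{ord}_t\gamma_0\geq ln_1+1$ by hypothesis.

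First I would \emph{bootstrap} the weak hypothesis $\text{ord}_t\gamma_0\geq ln_1+1$ to the stronger bound $\text{ord}_t\gamma_0\geq[m/n_0]+1$ (and subsequently $\text{ord}_t\gamma_1\geq[m/n_1]+1$) in the range $ln_0n_1<m<(l+1)n_0n_1$. Argue by contradiction: if $\text{ord}_t\gamma_0\leq[m/n_0]$ then $\text{ord}_t\gamma_0^{n_0}\leq n_0[m/n_0]\leq m$, so the relation $\gamma_1^{n_1}\equiv\gamma_0^{n_0}\pmod{t^{m+1}}$ forces the leading terms to cancel, giving $n_0\cdot\text{ord}_t\gamma_0=n_1\cdot\text{ord}_t\gamma_1$. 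By coprimality of $n_0,n_1$ this yields $n_1\mid\text{ord}_t\gamma_0$, whence $\text{ord}_t\gamma_0\geq(l+1)n_1>[m/n_0]$ (since $[m/n_0]\leq(l+1)n_1-1$ for $m<(l+1)n_0n_1$), a contradiction. From $\text{ord}_t\gamma_0\geq[m/n_0]+1$ we immediately get $\text{ord}_t\gamma_0^{n_0}\geq n_0([m/n_0]+1)\geq m+1$, and hence $\text{ord}_t\gamma_1^{n_1}\geq m+1$, i.e.\ $\text{ord}_t\gamma_1\geq[m/n_1]+1$.

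Next, for the higher equations $f_i$ with $i\geq 2$, I would reuse the estimate from the proof of Proposition~\ref{PropStructureBasis} verbatim: Lemma~\ref{LemmaStructureBasis} ($b_{i0}>n_0$) together with the just-obtained vanishing of $\gamma_0$ gives
\[\text{ord}_t\bigl(\gamma_0^{b_{i0}}\cdots\gamma_{i-1}^{b_{i(i-1)}}\bigr)\geq(n_0+1)\bigl([m/n_0]+1\bigr)\geq m+1,\]
so that the condition $\text{ord}_t f_i(\gamma)\geq m+1$ collapses to $\text{ord}_t\gamma_i^{n_i}\geq m+1$, that is $\text{ord}_t\gamma_i\geq[m/n_i]+1$. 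This establishes the first statement of the corollary.

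For the boundary case $m=(l+1)n_0n_1$, the same bootstrap argument now forces $\text{ord}_t\gamma_0\geq(l+1)n_1$ and $\text{ord}_t\gamma_1\geq(l+1)n_0$. Using that $f_1$ is weighted homogeneous of weight $n_0n_1$ with weights $(n_1,n_0)$, I would substitute $\gamma_0=t^{(l+1)n_1}\widetilde\gamma_0(t)$, $\gamma_1=t^{(l+1)n_0}\widetilde\gamma_1(t)$ to obtain
\[f_1(\gamma)\equiv t^{(l+1)n_0n_1}\bigl(\widetilde\gamma_1(t)^{n_1}-\widetilde\gamma_0(t)^{n_0}\bigr)\pmod{t^{m+1}},\]
so that $\text{ord}_t f_1(\gamma)\geq m+1$ reduces to the constant-term equation $(x_1^{((l+1)n_0)})^{n_1}-(x_0^{((l+1)n_1)})^{n_0}=0$, exactly as in the proof of Proposition~\ref{PropStructureBasis}(2). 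The cross-term estimate above applied with $[m/n_0]+1$ replaced by $(l+1)n_1$ gives the remaining conditions $\text{ord}_t\gamma_i\geq[(l+1)n_0n_1/n_i]+1$ for $i\geq 2$. The main obstacle is this initial bootstrap step; once the coprimality of $n_0,n_1$ combined with the bound $m<(l+1)n_0n_1$ is exploited as above, the rest is a routine adaptation of the proof of Proposition~\ref{PropStructureBasis}.
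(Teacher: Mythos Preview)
Your proposal is correct and follows the same overall architecture as the paper: treat $f_1$ first, then dispose of the higher $f_i$ via Lemma~\ref{LemmaStructureBasis}, and handle the boundary $m=(l+1)n_0n_1$ by weighted homogeneity exactly as in Proposition~\ref{PropStructureBasis}(2). The paper gives no details for the corollary beyond ``the same reasoning as in the proof of Proposition~\ref{PropStructureBasis}'', and in that proof the $f_1$-step is outsourced to \cite[Prop.~4.1]{M1}. Your explicit bootstrap via coprimality of $n_0,n_1$ supplies precisely what that reference would deliver: if $\text{ord}_t\gamma_0\le[m/n_0]$ then $n_0\,\text{ord}_t\gamma_0=n_1\,\text{ord}_t\gamma_1$, forcing $n_1\mid\text{ord}_t\gamma_0$ and hence $\text{ord}_t\gamma_0\ge(l+1)n_1>[m/n_0]$. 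So your argument is a self-contained version of what the paper leaves implicit.

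One small point at the boundary: your sentence ``the same bootstrap argument now forces $\text{ord}_t\gamma_1\ge(l+1)n_0$'' is slightly loose, since you have no a~priori lower bound on $\text{ord}_t\gamma_1$ to feed into a symmetric bootstrap. The clean fix is to observe that once $\text{ord}_t\gamma_0\ge(l+1)n_1$ is established, $\text{ord}_t\gamma_0^{n_0}\ge m$ and hence $\text{ord}_t\gamma_1^{n_1}\ge m=(l+1)n_0n_1$, giving $\text{ord}_t\gamma_1\ge(l+1)n_0$ directly. With that adjustment your boundary argument is complete and matches the paper's weighted-homogeneity computation.
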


From Proposition~\ref{PropStructureBasis}, we know that $\pi_{m}^{-1}(0)_{red}$ for $0 < m< n_0n_1$ is irreducible and isomorphic to an affine space. We also know that $\pi_{n_0n_1}^{-1}(0)_{red}$ is the product of an affine space and a hypersurface defined by the equation ${x_1^{(n_0)}}^{n_1}-{x_0^{(n_1)}}^{n_0}=0.$ We can stratify $\pi_{n_0n_1}^{-1}(0)_{red}\subset \co^{(g+1)(n_0n_1+1)}$ as follows: \[\pi_{n_0n_1}^{-1}(0)_{red}=(\pi_{n_0n_1}^{-1}(0)_{red}\cap \{x_0^{(n_1)}=0\}) \sqcup (\pi_{n_0n_1}^{-1}(0)_{red}\cap \{x_0^{(n_1)}\not=0\}).\] For $n_0n_1<m\leq 2n_0n_1,$ Corollary~\ref{CorStructureBasis} shows that $\pi_{m,n_0n_1}^{-1}(\pi_{n_0n_1}^{-1}(0)_{red}\cap \{x_0^{(n_1)}=0\}) = \pi_{m,n_0n_1}^{-1}(\{x_0^{(0)}=x_0^{(1)}=\cdots=x_0^{(n_1)}=0\})$ is irreducible and has a rather simple geometry. Therefore, noticing that $\pi_m=\pi_{n_0n_1}\circ\pi_{m,n_0n_1}$, we need to study the inverse image under $\pi_{m,n_0n_1}$ of $\pi_{n_0n_1}^{-1}(0)_{red}\cap \{x_0^{(n_1)})\not=0\}$ to obtain a stratification of $\pi_{m}^{-1}(0)_{red}$ in two strata of which we understand the geometry. For $2n_0n_1 < m$, Corollary~\ref{CorStructureBasis} does not immediately give us an easy description of $\pi_{m,n_0n_1}^{-1}(\{x_0^{(0)}=x_0^{(1)}=\cdots=x_0^{(n_1)}=0\})$. However, because $\pi_{m,n_0n_1}=\pi_{2n_0n_1,n_0n_1}\circ\pi_{m,2n_0n_1}$, understanding this inverse image boils down to understanding the inverse image under $\pi_{m,2n_0n_1}$ of $\pi_{2n_0n_1,n_0n_1}^{-1}(\{x_0^{(0)}=x_0^{(1)}=\cdots=x_0^{(n_1)}=0\})$, which we again stratify in two strata corresponding to $x_0^{(2n_0)} = 0 $ and $x_0^{(2n_0)} \neq 0$. For general $m \geq 1$, the stratification of $\pi_{(k+1)n_0n_1,kn_0n_1}^{-1}(\{x_0^{(0)}=x_0^{(1)}=\cdots=x_0^{(kn_1)}=0\})_{red}$ for $k\geq 0$ as 
\begin{align*}
	&(\pi_{(k+1)n_0n_1,kn_0n_1}^{-1}(\{x_0^{(0)}=x_0^{(1)}=\cdots=x_0^{(kn_1)}=0\})_{red}\cap \{x_0^{((k+1)n_1)}=0\}) \\
	&\sqcup (\pi_{(k+1)n_0n_1,kn_0n_1}^{-1}(\{x_0^{(0)}=x_0^{(1)}=\cdots=x_0^{(kn_1)}=0\})_{red}\cap \{x_0^{((k+1)n_1)}\not=0\})
\end{align*}
    yields the following stratification of $\pi_{m}^{-1}(0)_{red}:$ let $l\in \n$ such that $ln_0n_1<m\leq(l+1)n_0n_1,$ then
\begin{equation}\label{Stratification}
    \pi^{-1}_m(0)_{red} = \Big(\bigsqcup_{k=1}^l D_{m,k}\Big) \sqcup B_m,
\end{equation}
where 
\begin{alignat*}{2}
	&D_{m,k} &&:= \pi_{m,kn_0n_1}^{-1}(\{x_0^{(0)}=x_0^{(1)}=\cdots=x_0^{(kn_1-1)}=0\}\cap \{x_0^{(kn_1)}\not=0\})_{red}, \\
	&B_m &&:= \pi_{m,ln_0n_1}^{-1}(\{x_0^{(0)}=x_0^{(1)}=\cdots=x_0^{(ln_1)}=0\})_{red}.
\end{alignat*}
This stratification will allow us to determine the irreducible components of $Y_m$ and will be crucial for our computation of the motivic zeta function associated with $Y$. It is important to notice, as we will see later, that some of the above strata may be empty. Furthermore, $B_m$ is a closed irreducible subvariety of $\pi_m^{-1}(0)_{red}$ of which Corollary~\ref{CorStructureBasis} provides the geometric structure. In particular, we know its codimension in $\mathbb{C}^{(g+1)(m+1)}$.

\begin{cor}\label{CorDimBm}
Let $l \in \n$. For $m\in\mathbb{N}$ such that $ln_0n_1<m<(l+1)n_0n_1,$ the codimension of $B_m$ in $\mathbb{C}^{(g+1)(m+1)}$ is equal to \[g+1+\sum_{i=0}^{g} \Big[\frac{m}{n_i}\Big].\] For $m=(l+1)n_0n_1,$ the codimension of $B_m$ in $\mathbb{C}^{(g+1)(m+1)}$ is equal to \[g+ (l+1)(n_0+n_1)+\sum_{i=2}^{g} \Big[\frac{(l+1)n_0n_1}{n_i}\Big].\]
\end{cor}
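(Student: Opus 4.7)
The plan is to read off both codimensions directly from the explicit presentations of $B_m$ given by Corollary~\ref{CorStructureBasis}, by checking that in each case the stated generators cut out a subvariety of the expected codimension, and then counting.

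First I would handle the case $ln_0n_1<m<(l+1)n_0n_1$. By Corollary~\ref{CorStructureBasis}, applied to the truncation $\pi_{m,ln_0n_1}$, the reduced fiber $B_m$ is the vanishing locus inside $\mathbb{C}^{(g+1)(m+1)}$ of the ideal generated by $x_i^{(0)},\ldots,x_i^{([m/n_i])}$ for $i=0,\ldots,g$. These generators are pairwise distinct coordinate functions on the ambient affine space, so together they form a regular sequence of linear forms. Hence $B_m$ is a linear subspace of codimension equal to the total number of generators, namely $\sum_{i=0}^g\bigl([m/n_i]+1\bigr)=(g+1)+\sum_{i=0}^g [m/n_i]$, as claimed.

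For the boundary case $m=(l+1)n_0n_1$, Corollary~\ref{CorStructureBasis} again provides an explicit defining ideal. It consists of three kinds of generators: the $(l+1)n_1$ coordinates $x_0^{(0)},\ldots,x_0^{((l+1)n_1-1)}$, the $(l+1)n_0$ coordinates $x_1^{(0)},\ldots,x_1^{((l+1)n_0-1)}$, the $[(l+1)n_0n_1/n_i]+1$ coordinates $x_i^{(0)},\ldots,x_i^{([(l+1)n_0n_1/n_i])}$ for each $i=2,\ldots,g$, together with the single binomial $\bigl(x_1^{((l+1)n_0)}\bigr)^{n_1}-\bigl(x_0^{((l+1)n_1)}\bigr)^{n_0}$. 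All linear generators involve pairwise distinct coordinates, and the binomial involves two further coordinates $x_0^{((l+1)n_1)}$ and $x_1^{((l+1)n_0)}$ which do not appear in any of the linear generators. Since $\gcd(n_0,n_1)=1$ (as recalled in Section~\ref{SpaceCurve}), the binomial $Y^{n_1}-X^{n_0}$ is irreducible in $\mathbb{C}[X,Y]$ and therefore cuts out an irreducible hypersurface of codimension exactly one in the $\mathbb{C}^2$ spanned by those two coordinates. Consequently, $B_m$ is a product of a linear subspace in the remaining coordinates with this irreducible binomial hypersurface, so its codimension equals the sum of the codimensions of the two factors.

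Summing, the codimension is
\[
(l+1)n_1 + (l+1)n_0 + 1 + \sum_{i=2}^{g}\Bigl([{\textstyle\frac{(l+1)n_0n_1}{n_i}}]+1\Bigr) = g+(l+1)(n_0+n_1)+\sum_{i=2}^{g}\Bigl[\frac{(l+1)n_0n_1}{n_i}\Bigr],
\]
matching the claimed formula. The only non-bookkeeping point is the irreducibility of the binomial, but this is immediate from $\gcd(n_0,n_1)=1$; aside from this, the argument is purely a matter of counting, so I do not anticipate a serious obstacle.
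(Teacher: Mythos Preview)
Your proposal is correct and follows exactly the approach the paper intends: the corollary is stated without proof in the paper, as an immediate consequence of the explicit presentations in Corollary~\ref{CorStructureBasis}, and your argument simply fills in the counting details. One small remark: you do not actually need the irreducibility of the binomial $Y^{n_1}-X^{n_0}$ to conclude it has codimension~$1$ --- any nonzero nonunit polynomial in a polynomial ring defines a hypersurface of pure codimension~$1$ --- so that step is slightly more than necessary, but it is not wrong.
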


Still, we need to understand the geometry of the strata $D_{m,k}$ for $k = 1, \ldots, l$, which are locally closed subvarieties of $\pi_m^{-1}(0)_{red}$. We begin with introducing some useful notations. Firstly, for $k \geq 1$ and $m \geq kn_0n_1$, let $\C_{m,k}:= \overline{D_{m,k}}$ be the Zariski closure of $D_{m,k}$ in $\pi_m^{-1}(0)_{red}.$ Secondly, for $k \geq 1$, let $j(k) \in \n$ be defined by 
 	\[j(k) := \left\{ 
 	\begin{array}{cl}
 	2 & \text{ if } n_2 \nmid k \\
 	\text{max} _{l \in \n} \{n_2\cdots n_{l-1} \mid k\} & \text{ otherwise.}
 	\end{array} 
 	\right. \]
 Note that $2 \leq j(k) \leq g+1$. For $1\leq i<j(k)$ and for $m\in \n$ satisfying  \[\frac{kn_i\bar{\beta}_i}{e_1}\leq m<\frac{kn_{i+1}\bar{\beta}_{i+1}}{e_1},\] where, by convention, $\bar{\beta}_{g+1}:=+\infty$, we define 
\begin{equation}\label{cik(m)}
 	c_{i,k}(m):= k(n_0+n_1)+\sum_{l=2}^i\frac{k\bar{\beta}_l}{e_1}+ \sum_{l=1}^i\Big(m-\frac{kn_l\bar{\beta}_l}{e_1}+1\Big)+\sum_{l=i+1}^g\Big(\Big[\frac{m}{n_l}\Big]+1\Big).
\end{equation}
 Finally, let $Y^i$ for $i=1,\ldots,g$ be the complete intersection curve defined in $\co^{i+1}$ by the first $i$ equations $f_1,\ldots,f_i$ of the $g$ defining equations~\eqref{EquationsY} of $Y$. Note that $Y^g=Y$ and that $Y^i \setminus \{0\} \simeq \co \setminus \{0\}$ for all $i = 1,\ldots, g$. 

\begin{prop}\label{PropDmk} Let $k \geq 1$ and $1\leq i < j(k)$. For $m\in \n$ with $\frac{kn_i\bar{\beta}_i}{e_1}\leq m<\frac{kn_{i+1}\bar{\beta}_{i+1}}{e_1},$ the stratum $D_{m,k}$ is isomorphic to \[(Y^i\setminus \{0\})~\times~ \co^{(g+1)(m+1)-c_{i,k}(m)-1}\simeq (\co\setminus \{0\}) ~\times~ \co^{(g+1)(m+1)-c_{i,k}(m)-1}.\]
In particular, $\C_{m,k}$ is irreducible and its codimension in $\co^{(g+1)(m+1)}$ is equal to $c_{i,k}(m).$ 
\noindent For $m\geq \frac{kn_{j(k)}\bar{\beta}_{j(k)}}{e_1},$ we have that $D_{m,k}=\emptyset.$
\end{prop}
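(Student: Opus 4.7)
The plan is to prove the three claims of the proposition simultaneously by induction on $i$, exploiting the layered structure of the equations~\eqref{EquationsY} to force the orders $\text{ord}_t(\gamma_l) = \frac{k\lbeta_l}{e_1}$ one variable at a time. Throughout, a jet $\gamma \in D_{m,k}$ satisfies $\text{ord}_t(\gamma_0) = kn_1 = \frac{k\lbeta_0}{e_1}$ by the very definition of $D_{m,k}$, and we will use the relations $\lbeta_0 = n_1 e_1$, $\lbeta_1 = n_0 e_1$ freely.

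For the base case $i = 1$, the equation $f_1(\gamma) = \gamma_1^{n_1} - \gamma_0^{n_0} \equiv 0 \pmod{t^{m+1}}$, together with $m \geq kn_0n_1$, forces $\text{ord}_t(\gamma_1) = kn_0 = \frac{k\lbeta_1}{e_1}$. Writing $\gamma_0 = t^{kn_1}\tilde a(t)$ and $\gamma_1 = t^{kn_0}\tilde b(t)$ with $\tilde a(0),\tilde b(0)\neq 0$, the equation becomes $\tilde b^{n_1} \equiv \tilde a^{n_0} \pmod{t^{m-kn_0n_1+1}}$. The leading-coefficient equation $b_0^{n_1} = a_0^{n_0}$ cuts out $Y^1\setminus\{0\}$, and for each higher order $l\geq 1$ the relation has the form $n_1 b_0^{n_1-1} b_l = (\text{polynomial in earlier } a_*, b_*)$; since $n_1 b_0^{n_1-1}\neq 0$, Hensel's lemma determines $b_l$ uniquely. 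The coefficients $a_1,\ldots,a_{m-kn_1}$ and $b_{m-kn_0n_1+1},\ldots,b_{m-kn_0}$ stay free. For $l \geq 2$, the monomial part of $f_l$ has $t$-order at least $\frac{kn_l\lbeta_l}{e_1} > m$ (using $n_l\lbeta_l \geq n_2\lbeta_2$ and the regime bound), so $f_l\equiv 0 \pmod{t^{m+1}}$ collapses to $\gamma_l^{n_l}\equiv 0$, giving $x_l^{(j)} = 0$ for $j = 0,\ldots,[\frac{m}{n_l}]$.

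For the inductive step, assume $\text{ord}_t(\gamma_l) = \frac{k\lbeta_l}{e_1}$ is established for $l < i$, and let $\frac{kn_i\lbeta_i}{e_1}\leq m < \frac{kn_{i+1}\lbeta_{i+1}}{e_1}$. The semigroup relation $n_i\lbeta_i = \sum_{l<i}b_{i,l}\lbeta_l$ forces $\text{ord}_t\big(\prod_{l<i}\gamma_l^{b_{i,l}}\big) = \frac{kn_i\lbeta_i}{e_1}\leq m$, so $f_i(\gamma) \equiv 0 \pmod{t^{m+1}}$ forces $\text{ord}_t(\gamma_i) = \frac{k\lbeta_i}{e_1}$. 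This integer order is attainable precisely when $\frac{k\lbeta_i}{e_1}\in\n$, which (as follows from the definition of $j(k)$, i.e.\ the divisibility condition $n_2\cdots n_i\mid k$) holds iff $i < j(k)$; if $i = j(k)$ the constraint is inconsistent and $D_{m,k} = \emptyset$, proving the third claim. When $i < j(k)$, writing $\gamma_i = t^{k\lbeta_i/e_1}\tilde c(t)$ turns $f_i = 0$ into another Hensel-type equation: its leading-coefficient equation supplies the next defining equation of $Y^i$, and higher-order coefficients of $\tilde c$ are determined uniquely by the lower-order data. The same bound on monomial $t$-orders applied to $f_l$ for $l > i$ (using $m < \frac{kn_{i+1}\lbeta_{i+1}}{e_1}\leq \frac{kn_l\lbeta_l}{e_1}$) again yields $x_l^{(j)} = 0$ for $j = 0,\ldots,[\frac{m}{n_l}]$.

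Assembling everything, the leading coefficients $\big(x_0^{(kn_1)}, x_1^{(kn_0)},\ldots,x_i^{(k\lbeta_i/e_1)}\big)$ trace out $Y^i\setminus\{0\}\simeq \co\setminus\{0\}$ (via the monomial parametrization $u\mapsto(u^{\lbeta_0/e_i},\ldots,u^{\lbeta_i/e_i})$, coprime by construction), while all other surviving coordinates form an affine space. A direct tally of (i) the $\frac{k\lbeta_l}{e_1}$ forced vanishings for each $l\leq i$, (ii) the $m - \frac{kn_l\lbeta_l}{e_1} + 1$ Hensel-determined higher coefficients for each $l = 1,\ldots,i$, and (iii) the $[\frac{m}{n_l}] + 1$ vanishings for each $l = i+1,\ldots,g$, adds up to exactly the codimension $c_{i,k}(m)$ of~\eqref{cik(m)}; irreducibility of $\C_{m,k}$ is inherited from that of $Y^i\setminus\{0\}$. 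I expect the main obstacle to be the inductive step: one must verify rigorously that, after substituting the lower-order solutions, the equation $f_i = 0$ truly reduces at leading order to the defining equation of $Y^i$ (and not a non-trivial deformation of it), and that the divisibility obstruction at $i = j(k)$ matches exactly the non-integrality of $\frac{k\lbeta_i}{e_1}$. Once this structural input is in place, the combinatorial identity yielding $c_{i,k}(m)$ is routine, relying only on repeated application of $n_l\lbeta_l = \sum_{l'<l}b_{l,l'}\lbeta_{l'}$.
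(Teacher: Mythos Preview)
Your overall strategy---induction on $i$, forcing the orders $\text{ord}_t(\gamma_l)=\frac{k\lbeta_l}{e_1}$ one at a time via weighted homogeneity and a Hensel/triangular argument---is exactly the paper's approach. However, there is one real gap.

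In both the base case and the inductive step you assert that for $l>i$ ``the monomial part of $f_l$ has $t$-order at least $\frac{kn_l\lbeta_l}{e_1}$''. That bound would require $\text{ord}_t(\gamma_r)\geq \frac{k\lbeta_r}{e_1}$ for \emph{all} $r<l$, but for $r=i+1,\ldots,l-1$ you have established nothing about $\gamma_r$ yet; these are precisely the variables whose orders you are trying to pin down in the same step. As written the argument is circular. The paper fills this with Lemma~\ref{LemmaDmk}: for every $j$ with $i+1\leq j\leq g$ one has
\[
b_{j0}\lbeta_0+\cdots+b_{ji}\lbeta_i \;\geq\; n_{i+1}\lbeta_{i+1},
\]
so the contribution of the \emph{known} factors $\gamma_0,\ldots,\gamma_i$ alone already gives the monomial in $f_l$ order $\geq \frac{kn_{i+1}\lbeta_{i+1}}{e_1}>m$; the unknown factors only help. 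This inequality is not a formal consequence of the semigroup relation $n_l\lbeta_l=\sum_{r<l}b_{l,r}\lbeta_r$; it uses $b_{l,r}<n_r$ and the chain $n_r\lbeta_r<\lbeta_{r+1}$ and is the substantive input you are missing.

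Your emptiness argument at $i=j(k)$ via non-integrality of $\frac{k\lbeta_i}{e_1}$ is essentially the paper's argument that $n_i\nmid \frac{kn_i\lbeta_i}{e_1}$, but note that the equivalence ``$\frac{k\lbeta_i}{e_1}\in\n \Leftrightarrow i<j(k)$'' is only valid once $n_2\cdots n_{i-1}\mid k$ has been secured by the induction (then one uses $\gcd(\lbeta_i/e_i,n_i)=1$); it is not a bare statement about $k$. The rest of your outline---the count yielding $c_{i,k}(m)$ and the identification of the leading coefficients with $Y^i\setminus\{0\}$---is in line with the paper.
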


Before giving a proof of this proposition, we show the next lemma.

\begin{lem}\label{LemmaDmk}
	Let $i,j\in \n$ be such that $i+1\leq j \leq g.$ We have \[b_{j0}\bar{\beta}_0+\cdots+b_{ji}\bar{\beta}_i\geq n_{i+1}\bar{\beta}_{i+1},\] and the inequality is strict if $i+1 < j.$ 
\end{lem}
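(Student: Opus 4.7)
The plan is to exploit the key identity from Section~\ref{SpaceCurve} that $n_j\lbeta_j = b_{j0}\lbeta_0 + \cdots + b_{j(j-1)}\lbeta_{j-1}$, combined with the inequalities $b_{jl} < n_l$ for $l \geq 1$ and the growth inequalities $n_l\lbeta_l < \lbeta_{l+1}$ and $n_l \geq 2$ that were recalled right after the definition of the $n_i$. I would separate the argument into the boundary case $j = i+1$ (where the inequality reduces to an equality, so the $\geq$ in the lemma is exactly the defining relation) and the case $j > i+1$ (where strict inequality must be established).

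For the boundary case $j=i+1$, the sum on the left is $b_{(i+1)0}\lbeta_0+\cdots+b_{(i+1)i}\lbeta_i$, which by definition is $n_{i+1}\lbeta_{i+1}$. Equality holds, which is consistent with the weak inequality of the lemma.

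For the case $j > i+1$, I would rewrite
\[b_{j0}\lbeta_0+\cdots+b_{ji}\lbeta_i \;=\; n_j\lbeta_j - \bigl(b_{j(i+1)}\lbeta_{i+1}+\cdots+b_{j(j-1)}\lbeta_{j-1}\bigr),\]
so the strict inequality to prove becomes
\[b_{j(i+1)}\lbeta_{i+1}+\cdots+b_{j(j-1)}\lbeta_{j-1} \;<\; n_j\lbeta_j - n_{i+1}\lbeta_{i+1}.\]
The left side is bounded above by $n_{i+1}\lbeta_{i+1}+\cdots+n_{j-1}\lbeta_{j-1}$ since $b_{jl}<n_l$ for $l\geq 1$, so it suffices to establish
\[n_j\lbeta_j \;>\; n_{j-1}\lbeta_{j-1}+n_{j-2}\lbeta_{j-2}+\cdots+n_{i+1}\lbeta_{i+1}+n_{i+1}\lbeta_{i+1}.\]
This is exactly the type of telescoping estimate carried out in the proof of Lemma~\ref{LemmaStructureBasis}: iterating $\lbeta_{l+1}>n_l\lbeta_l$ together with $n_{l+1}\geq 2$ gives $n_{l+1}\lbeta_{l+1}>n_l\lbeta_l+n_l\lbeta_l$, which peeled off repeatedly from $l = j-1$ down to $l=i+1$ yields the required chain of inequalities.

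Combining these two steps gives the desired strict inequality for $j>i+1$ and the equality for $j=i+1$, completing the proof. The only delicate point is to ensure the telescoping stops at the correct index (producing the duplicated term $n_{i+1}\lbeta_{i+1}$ which supplies the slack needed after absorbing the $-n_{i+1}\lbeta_{i+1}$); this mirrors exactly the bookkeeping already done in Lemma~\ref{LemmaStructureBasis}, so no genuinely new obstacle is expected.
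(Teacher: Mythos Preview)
Your proposal is correct and follows essentially the same approach as the paper: split into $j=i+1$ (equality by the defining relation) and $j>i+1$, rewrite the left side as $n_j\lbeta_j$ minus the tail $\sum_{l>i} b_{jl}\lbeta_l$, bound that tail via $b_{jl}<n_l$, and finish with the telescoping estimate from Lemma~\ref{LemmaStructureBasis}. The paper's proof is line-for-line the same argument.
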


\begin{proof}
For $i+1 = j,$ the inequality is an equality and there is nothing to prove. Assume that $i+1<j.$ On the one hand, we have \[b_{j0}\bar{\beta}_0+\cdots+b_{ji}\bar{\beta}_i = n_j\bar{\beta}_j-b_{j(i+1)}\bar{\beta}_{i+1}-\cdots-b_{j(j-1)}\bar{\beta}_{j-1} \geq n_j\bar{\beta}_j-n_{i+1}\bar{\beta}_{i+1}-\cdots-n_{j-1}\bar{\beta}_{j-1}.\]
The inequality follows from the fact that $b_{jl}<n_l$ for $l=1,\ldots,j-1.$ On the other hand, as in the proof of Lemma~\ref{LemmaStructureBasis}, we have \[n_j\bar{\beta}_j >n_{j-1}\bar{\beta}_{j-1}+n_{j-2}\bar{\beta}_{j-2}+\cdots+n_{i+1}\bar{\beta}_{i+1}+n_{i+1}\bar{\beta}_{i+1}.\]
The two series of inequalities give the strict inequality in the lemma.
\end{proof}

\begin{proof}[Proof of Proposition~\ref{PropDmk}] 
We still denote by $f_1,\ldots,f_g$ the defining equations of $Y$ and by $F_h^{(l)}$ for $h=1,\ldots,g$ and $l\in \n$ the polynomials defined from $f_h$ by the identity~\eqref{IdentityF} in Section~\ref{JetMotivic}. We prove that for $m\in \n$ with $\frac{kn_i\bar{\beta}_i}{e_1}\leq m<\frac{kn_{i+1}\bar{\beta}_{i+1}}{e_1}$ for some $1\leq i < j(k)$, the ideal defining the embedding of $D_{m,k}$ in $\text{Spec} ~\co[\underline{x}^{(j)};j=0,\ldots,m]_{x_0^{(kn_1)}}$ is generated by \[x_r^{(0)},\ldots,x_r^{(\frac{k\bar{\beta}_r}{e_1}-1)},~\mathcal F_h^{(l_h)}, ~x_s^{(0)},\ldots,x_s^{([\frac{m}{n_s}])};\] \[r = 0, \ldots, i;~ h=1,\ldots,i;~l_h=0, \ldots, m - \frac{kn_h\bar{\beta}_h}{e_1};~s=i+1,\ldots,g,\] where $\mathcal F_h^{(l_h)} := F_h^{(l_h)}\Big(x_0^{(\frac{k\lbeta_0}{e_1})}, \ldots, x_h^{(\frac{k\lbeta_h}{e_1})},\ldots,x_0^{(\frac{k\lbeta_0}{e_1} + l_h)}, \ldots,x_h^{(\frac{k\lbeta_h }{e_1}+l_h)}\Big).$ More precisely, for $h=1,\ldots,i,$ \[\mathcal F_h^{(0)} = {x_h^{(\frac{k\bar{\beta}_h}{e_1})}}^{n_h}-{x_0^{(\frac{k\bar{\beta}_0}{e_1})}}^{b_{h0}}\cdots {x_{h-1}^{(\frac{k\bar{\beta}_{h-1}}{e_1})}}^{b_{h(h-1)}}\] and for $l=1,\ldots,m-\frac{kn_h\bar{\beta}_h}{e_1},$ \[\mathcal F_h^{(l)} = \alpha_l{x_h^{(\frac{k\bar{\beta}_h}{e_1})}}^{n_h-1}x_h^{(\frac{k\bar{\beta}_h}{e_1}+l)}-H_l\Big(x_0^{(\frac{k\bar{\beta}_0}{e_1})},\ldots,x_h^{(\frac{k\bar{\beta}_h}{e_1})},\ldots,x_0^{(\frac{k\bar{\beta}_0}{e_1}+l)},\ldots,x_{h-1}^{(\frac{k\bar{\beta}_{h-1}}{e_1}+l)}, x_h^{(\frac{k\bar{\beta}_h}{e_1}+l-1)}\Big)\] for some $\alpha_l\in \co \setminus \{0\}$ and $H_l$ a polynomial. \\

The proof is by induction on $i.$ We begin with the case $i=1;$  let $kn_0n_1 \leq m < \frac{kn_2\lbeta_2}{e_1}$. As in Proposition~\ref{PropStructureBasis}, a closed point $\gamma \in \text{Spec} ~\co[\underline{x}^{(j)};j=0,\ldots,m]$ corresponds to a jet that we also call $\gamma =\gamma(t)=(\gamma_0(t),\ldots,\gamma_g(t))$ with \[\gamma_i(t)= \sum\limits_{l=0}^mx_i^{(l)}t^l,\] where $x_i^{(l)}$ are the coordinates of $\gamma.$ The condition that $\gamma \in D_{m,k}$ is equivalent to the conditions $x_0^{(0)}=\cdots=x_0^{(kn_1-1)}=0,~x_0^{(kn_1)}\not=0$, and  $ord_t(f_s(\gamma))\geq m+1$ for $s=1,\ldots,g.$ From a little argument using Corollary~\ref{CorStructureBasis}, one can see that this implies the equalities $x_1^{(0)}=\cdots=x_1^{(kn_0-1)}= 0$ and ${x_1^{(kn_0)}}^{n_1}-{x_0^{(kn_1)}}^{n_0}=0.$ Since $x_0^{(kn_1)}\not=0,$ the last equation tells us that $x_1^{(kn_0)}\not=0.$ \\

Let us first examine the condition $ord_t(f_1(\gamma))\geq m+1.$ We have 
\begin{align*}
	f_1(\gamma)  \equiv f_1\bigg(\sum\limits_{l=kn_1}^mx_0^{(l)}t^l,\sum\limits_{l=kn_0}^mx_1^{(l)}t^l\bigg)& \equiv t^{kn_0n_1}f_1\bigg(\sum\limits_{l=kn_1}^mx_0^{(l)}t^{l-kn_1},\sum\limits_{l=kn_0}^m x_1^{(l)}t^{l-kn_0}\bigg)\\
	& \equiv t^{kn_0n_1}\sum_{l=0}^{m-kn_0n_1}\mathcal{F}_1^{(l)}t^l ~~\text{mod}~~(t^{m+1}),
\end{align*} 
where $\mathcal{F}_1^{(l)}:= F_1^{(l)}\big(x_0^{(kn_1)},x_1^{(kn_0)},\ldots,x_0^{(kn_1+l)},x_1^{(kn_0+l)}\big).$ The second equality follows from the weighted-homogeneity of $f_1$ with weights $n_1$ and $n_0$ for $x_0$ and $x_1$, respectively. Hence, the condition that $ord_t(f_1(\gamma))\geq m+1$ is equivalent to the annihilation of $\mathcal{F}_1^{(l)}$ for ${l=0,\ldots,m-kn_0n_1}$ with the condition that $x_0^{(kn_1)}\not=0.$ These are the defining equations of the jet schemes of the regular part of the curve defined by $x_1^{{(kn_0)}^{n_1}}-x_0^{{(kn_1)}^{n_0}}=0$. Clearly, $\mathcal F_1^0 = x_1^{{(kn_0)}^{n_1}}-x_0^{{(kn_1)}^{n_0}}$, and for $l = 1, \ldots, m-kn_0n_1$, they are of the form \[\mathcal{F}_1^{(l)}=\alpha_l {x_1^{(kn_0)}}^{n_1-1}x_1^{(kn_0+l)}-H_l\Big(x_0^{(kn_1)},x_1^{(kn_0)},\ldots,x_0^{(kn_1+l)},x_1^{(kn_0+l-1)}\Big),\] where $\alpha_l \in \co \setminus \{0\}$ and $H_l$ is a polynomial. Because $x_1^{(kn_0)}\not=0$, we can divide each $\mathcal{F}_1^{(l)}$ for $l = 1, \ldots, m - kn_0n_1$ by ${x_1^{(kn_0)}}^{n_1-1}$ to see that $\mathcal{F}_1^{(l)}$ is linear in $x_1^{(kn_0+l)}$, and that the expression of $x_1^{(kn_0+l)}$ does not depend on the variables $x_1^{(h)}$ for $h > kn_0+l.$ In other words, the system of equations given by $\mathcal{F}_1^{(l)}$ for $l=0,\ldots,m-kn_0n_1$ is triangular in the variables $x_1^{(kn_0)},\ldots, x_1^{(kn_0+m-kn_0n_1)}.$  \\

Let us now examine the conditions $ord_t(f_s(\gamma))\geq m+1$ for $s=2,\ldots,g$. We already know that $ord_t(\gamma_0)=kn_1=\frac{k\bar{\beta}_0}{e_1}$ and $ord_t(\gamma_1)=kn_0=\frac{k\bar{\beta}_1}{e_1}.$ Therefore, \[ord_t(x_0^{b_{s0}}\cdots x_{s-1}^{b_{s(s-1)}}(\gamma))\geq ord_t(\gamma_0^{b_{s0}}\gamma_1^{b_{s1}})=b_{s0}\frac{k\bar{\beta}_0}{e_1}+b_{s1}\frac{k\bar{\beta}_1}{e_1}\geq \frac{kn_2\bar{\beta}_2}{e_1}\geq m+1,\] where the last two inequalities follow from Lemma~\ref{LemmaDmk} and  our assumption on $m$, respectively. Since $f_s=x_s^{n_s}-x_0^{b_{s0}}\cdots x_{s-1}^{b_{s(s-1)}},$ it follows that $ord_t(f_s(\gamma))\geq m+1$ is equivalent to $ord_t(x_s^{n_s}(\gamma)) = ord_t(\gamma_s^{n_s})\geq m+1$, which is in turn equivalent to $x_s^{(0)} = \cdots = x_s^{([\frac{m}{n_s}])} = 0$. \\

To recapitulate, the embedding of $D_{m,k}$ in $\text{Spec} ~\co[\underline{x}^{(j)};j=0,\ldots,m]_{x_0^{(kn_1)}}$ is defined by the ideal \[\Big(x_r^{(0)},\ldots,x_r^{(\frac{kn_0n_1}{n_r}-1)},~\mathcal{F}_1^{(l)},~ x_s^{(0)},\ldots,x_s^{([\frac{m}{n_s}])};r = 0,1;~l=0,\ldots,m-kn_0n_1;~s=2,\ldots,g\Big),\] which is exactly the same as we claimed. The codimension of $D_{m,k}$ is equal to $c_{1,k}(m);$ indeed, the above ideal is a complete intersection because the system of equations given by $\mathcal{F}_1^{(l)}$ for $l=0,\ldots,m-kn_0n_1$ is triangular in the variables $x_1^{(kn_0)},\ldots,$ $x_1^{(kn_0+m-kn_0n_1)}.$ As $\mathcal F_1^0 = x_1^{{(kn_0)}^{n_1}}-x_0^{{(kn_1)}^{n_0}}$, we clearly also have \[D_{m,k}\simeq (Y^1\setminus \{0\})\times \co^{(g+1)(m+1)-c_{i,k}(m)-1}.\] 
 
We now proceed in the induction and assume that the description of the ideal which is given at the beginning of this proof is true for $i-1.$ We have to treat two cases.\\

\textbf{The case where $i < j(k)$.} We need to prove that the description for the ideal of $D_{m,k}$ is also true if $\frac{kn_i\lbeta_i}{e_1} \leq m < \frac{kn_{i+1}\lbeta_{i+1}}{e_1}$. Let $\gamma \in D_{m,k}$ be identified with its corresponding jet $\gamma(t)$. Because \[\pi_{m,kn_0n_1} = \pi_{m,\frac{kn_i\lbeta_i}{e_1}-1} \circ \pi_{\frac{kn_i\lbeta_i}{e_1}-1, kn_0n_1},\] we have \[D_{m,k} = \pi^{-1}_{m,\frac{kn_i\lbeta_i}{e_1}-1}\Big(D_{\frac{kn_i\lbeta_i}{e_1}-1,k}\Big).\] From the induction hypothesis, it follows that the coordinates of $\gamma$ satisfy, among others, the equations \[x_r^{(0)} = \cdots = x_r^{(\frac{k\bar{\beta}_r} {e_1}-1)} ={x_h^{(\frac{k\bar{\beta}_h}{e_1})}}^{n_h}-{x_0^{(\frac{k\bar{\beta}_0}{e_1})}}^{b_{h0}}\cdots {x_{h-1}^{(\frac{k\bar{\beta}_{h-1}}{e_1})}}^{b_{h(h-1)}} = 0;\] \[r = 0, \ldots, i-1;~ h=1,\ldots,i-1.\] Since $x_0^{(\frac{k\lbeta_0}{e_1})} \neq 0$, the equation for $h=1$ gives us that $x_1^{(\frac{k\bar{\beta}_1}{e_1})}\not=0.$ Then, the equation for $h = 2$ gives that $x_2^{(\frac{k\bar{\beta}_2}{e_1})}\not=0.$ We can repeat this to conclude that \[x_r^{(\frac{k\bar{\beta}_r}{e_1})}\not=0;~~~r=0,\ldots,i-1.\] Together with the other equations, this implies that $ord_t(\gamma_r)=\frac{k\bar{\beta}_r}{e_1}$ for $r=0,\ldots,i-1$. The induction hypothesis also tells us that $ord_t(\gamma_i) \geq \frac{k\lbeta_i}{e_1}$. We now investigate, modulo the defining ideal of $D_{\frac{kn_i\bar{\beta}_i}{e_1}-1,k},$ the condition $ord_t(f_i(\gamma))\geq m+1.$ \\

Note that the equation $f_i$ is weighted homogeneous of degree $\frac{n_i\lbeta_i}{e_1}$ if we give $x_r$ the weight $\frac{\bar{\beta}_r}{e_1}$ for $r=0,\ldots,i$. Therefore, \[f_i(\gamma)\equiv f_i\bigg(\sum\limits_{l=\frac{k\bar{\beta}_0}{e_1}}^mx_0^{(l)}t^l,\sum\limits_{l=\frac{k\bar{\beta}_1}{e_1}}^mx_1^{(l)}t^l,\ldots,\sum\limits_{l=\frac{k\bar{\beta}_i}{e_1}}^mx_i^{(l)}t^l\bigg) \equiv t^{\frac{kn_i\bar{\beta}_i}{e_1}}\sum_{l=0}^{m-\frac{kn_i\lbeta_i}{e_1}}\mathcal{F}_i^{(l)}t^l ~~~\text{mod}~~(t^{m+1}),\]
 where $\mathcal{F}_i^{(l)}:= F_i^{(l)}\Big(x_0^{(\frac{k\bar{\beta}_0}{e_1})},\ldots,x_i^{(\frac{k\bar{\beta}_i}{e_1})},\ldots, x_0^{(\frac{k\bar{\beta}_0}{e_1}+l)},\ldots,x_i^{(\frac{k\bar{\beta}_i}{e_1}+l)}\Big)$. More precisely, \[\mathcal{F}_i^{(0)}= {x_i^{(\frac{k\bar{\beta}_i}{e_1})}}^{n_i}-{x_0^{(\frac{k\bar{\beta}_0}{e_1})}}^{b_{i0}}\cdots {x_{i-1}^{(\frac{k\bar{\beta}_{i-1}}{e_1})}}^{b_{i(i-1)}},\] and, for $l=1,\ldots,m-\frac{kn_i\bar{\beta}_i}{e_1},$ \[\mathcal{F}_i^{(l)}= \alpha_l{x_i^{(\frac{k\bar{\beta}_i}{e_1})}}^{n_i-1}x_i^{(\frac{k\bar{\beta}_i}{e_1}+l)}-H_l\Big(x_0^{(\frac{k\bar{\beta}_0}{e_1})},\ldots, x_i^{(\frac{k\bar{\beta}_i}{e_1})},\ldots,x_0^{(\frac{k\bar{\beta}_0}{e_1}+l)},\ldots,x_{i-1}^{(\frac{k\bar{\beta}_{i-1}}{e_1}+l)}, x_i^{(\frac{k\bar{\beta}_i}{e_1}+l-1)}\Big)\] for some $\alpha_l \in \co\setminus \{0\}$ and a polynomial $H_l$. The condition $ord_t(f_i(\gamma))\geq m+1$ is thus given by the annihilation of $\mathcal{F}_i^{(l)}$ for $l=0,\ldots, m-\frac{kn_i\bar{\beta}_i}{e_1}.$ Because $x_r^{(\frac{k\bar{\beta}_r}{e_1})}\not=0$ for $r=0,\ldots,i-1$, the equation $\mathcal{F}_i^{(0)}=0$ gives that $ x_i^{(\frac{k\bar{\beta}_i}{e_1})}\not=0.$ Dividing ${x_i^{(\frac{k\bar{\beta}_i}{e_1})}}^{n_i-1}$ in $\mathcal{F}_i^{(l)}$ for $l\geq 1$, we again see that the system of equations is triangular in the variables $x_i^{(\frac{k\bar{\beta}_i}{e_1})}, \ldots, x_i^{(\frac{k\bar{\beta}_i}{e_1} + m - \frac{kn_i\bar{\beta}_i}{e_1})}$, and both the description of the ideal and the statement of the proposition follow.\\
 
\textbf{The case where $i = j(k)$.} 
In this case, it is enough to prove that $D_{\frac{kn_i\bar{\beta_i}}{e_1},k}=\emptyset.$ Suppose there exists an element $\gamma \in D_{\frac{kn_i\bar{\beta_i}}{e_1},k}$ and identify $\gamma$ once more with its jet $\gamma(t)$. On the one hand, like in the previous case, the induction hypothesis implies that \[x_r^{(\frac{k\bar{\beta}_r}{e_1})}\not=0;~~~r=0,\ldots,i-1.\] Therefore, \[ord_t(x_0^{b_{i0}}\cdots x_{i-1}^{b_{i(i-1)}}(\gamma))=\frac{kn_i\bar{\beta}_i}{e_1}.\] On the other hand, from the assumption $i = j(k)$ and the fact that $e_i =n_{i+1}\cdots n_g$, we have $\frac{kn_i\bar{\beta}_i}{e_1}= \frac{k'\bar{\beta}_i}{e_i}$ for some $k' \geq 1$ which is not a multiple of $n_i$. Since $n_i=\frac{e_{i-1}}{e_i}$ and  $e_i=gcd(e_{i-1},\bar{\beta}_i)$, we also know that $n_i$ and $\frac{\bar{\beta}_i}{e_i}$ are coprime. It follows that $n_i$ does not divide $\frac{kn_i\bar{\beta}_i}{e_1}=\frac{k'\bar{\beta}_i}{e_i}.$ As $f_i=x_i^{n_i}-x_0^{b_{i0}}\cdots x_{i-1}^{b_{i(i-1)}}$ and $ord_t(\gamma_i) \geq \frac{k\lbeta_i}{e_1}$ by the induction hypothesis, we can conclude that $ord_t(f_i(\gamma))=\frac{kn_i\bar{\beta}_i}{e_1}$. This contradicts that $\gamma \in D_{\frac{kn_i\bar{\beta}_i}{e_1},k}$. In other words, $D_{\frac{kn_i\bar{\beta}_i}{e_1},k}=\emptyset.$
   \end{proof}

We are now able to give the decomposition of $\pi_m^{-1}(0)_{red}$ into irreducible components.
   
\begin{theo}\label{ThmIrrCom}
Consider $m\geq 1$. Let $l\in \n$ be such that $ln_0n_1<m\leq(l+1)n_0n_1$ and let 
\begin{alignat*}{2}
	&D_{m,k} &&:= \pi_{m,kn_0n_1}^{-1}(\{x_0^{(0)}=x_0^{(1)}=\cdots=x_0^{(kn_1-1)}=0\}\cap \{x_0^{(kn_1)}\not=0\})_{red}, \quad k = 1, \ldots, l, \\
	&B_m &&:= \pi_{m,ln_0n_1}^{-1}(\{x_0^{(0)}=x_0^{(1)}=\cdots=x_0^{(ln_1)}=0\})_{red}.
\end{alignat*} 
The irreducible components of $\pi_{m}^{-1}(0)_{red}$ are $\C_{m,k} := \overline{D_{m,k}}$ for $k=1,\ldots,l$ such that ${m < \frac{kn_{j(k)}\bar{\beta}_{j(k)}}{e_1}}$ and $B_m.$ Furthermore, $B_m$ is a component of maximal dimension. 
\end{theo}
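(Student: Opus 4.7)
My plan is to combine the stratification \eqref{Stratification} with Proposition~\ref{PropDmk} and Corollary~\ref{CorStructureBasis} to display $\pi_m^{-1}(0)_{red}$ as a finite union of irreducible closed subvarieties, and then rule out strict inclusions among them. The stratification already gives $\pi_m^{-1}(0)_{red}=B_m\sqcup\bigsqcup_{k=1}^l D_{m,k}$; Proposition~\ref{PropDmk} says that $D_{m,k}$ is non-empty precisely when $m<kn_{j(k)}\lbeta_{j(k)}/e_1$ and is then isomorphic to $(\co\setminus\{0\})\times\co^N$, hence irreducible with irreducible closure $\C_{m,k}$; and Corollary~\ref{CorStructureBasis} writes $B_m$ as an affine space times a binomial hypersurface of the form $x^{n_1}-y^{n_0}=0$, which is irreducible since $\gcd(n_0,n_1)=1$. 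Thus $B_m$ is also irreducible, and
\[
\pi_m^{-1}(0)_{red}=B_m\cup\bigcup_{\substack{1\le k\le l\\ m<kn_{j(k)}\lbeta_{j(k)}/e_1}}\C_{m,k}
\]
is a finite cover by irreducible closed subvarieties whose irreducible components are the maximal members.

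To rule out the strict inclusions I would argue as follows. For any admissible $k$, the stratum $D_{m,k}$ is Zariski-dense in $\C_{m,k}$ and $D_{m,k}\cap B_m=\emptyset$, since $D_{m,k}$ requires $x_0^{(kn_1)}\neq 0$ while $B_m$ imposes $x_0^{(kn_1)}=0$ (because $kn_1\le ln_1$); hence $\C_{m,k}\not\subseteq B_m$. For two admissible indices $k<k'$, the vanishing $x_0^{(kn_1)}=0$ is a closed condition that holds identically on $D_{m,k'}$ (since $kn_1<k'n_1$) and therefore extends to $\C_{m,k'}$, whereas $x_0^{(kn_1)}$ is invertible on the dense open $D_{m,k}$ of $\C_{m,k}$; this gives $\C_{m,k}\not\subseteq\C_{m,k'}$. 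The remaining exclusions $\C_{m,k'}\not\subseteq\C_{m,k}$ (for $k'>k$) and $B_m\not\subseteq\C_{m,k}$, as well as the final assertion that $B_m$ has maximal dimension, will follow from comparisons of dimensions.

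The main obstacle of the proof is therefore this numerical codimension comparison. Using the explicit formula \eqref{cik(m)} for $\mathrm{codim}\,\C_{m,k}=c_{i,k}(m)$ (with $i$ determined by $k$ and $m$) and the formulas for $\mathrm{codim}\,B_m$ from Corollary~\ref{CorDimBm}, I would establish $\mathrm{codim}\,B_m\le c_{i,k}(m)$ for every valid $(i,k)$, and $c_{i',k'}(m)<c_{i,k}(m)$ whenever $k'>k$; combined with irreducibility, the strict inequality yields $\C_{m,k'}\not\subseteq\C_{m,k}$ by dimension. Both inequalities reduce after cancellations to arithmetic estimates involving floor functions $[m/n_l]$ and partial sums of $kn_l\lbeta_l/e_1$, which I would handle using the semigroup relations $n_l\lbeta_l<\lbeta_{l+1}$ from Section~\ref{SpaceCurve}, together with $\lbeta_0/e_1=n_1$ and $\lbeta_1/e_1=n_0$, in the same spirit as Lemmas~\ref{LemmaStructureBasis} and~\ref{LemmaDmk}. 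For certain values of $m$, one finds equality $c_{i,k}(m)=\mathrm{codim}\,B_m$, which is consistent with the statement asserting only that $B_m$ is \emph{a} component of maximal dimension rather than the unique one.
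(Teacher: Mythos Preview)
Your proposal follows essentially the same route as the paper: the stratification~\eqref{Stratification}, irreducibility of $B_m$ and of the $\C_{m,k}$, the $x_0^{(kn_1)}$--argument for the non-inclusions $\C_{m,k}\not\subseteq B_m$ and $\C_{m,k}\not\subseteq\C_{m,k'}$ ($k<k'$), and then a codimension comparison for the reverse directions.

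One small point deserves care. You assert the \emph{strict} inequality $c_{i',k'}(m)<c_{i,k}(m)$ for $k'>k$, but the paper only establishes (and only needs) the non-strict inequality $c_{k-1}(m)\ge c_k(m)$; strictness can indeed fail (for example $\mathrm{codim}\,\C_{ln_0n_1+1,l}=\mathrm{codim}\,B_{ln_0n_1+1}$, and similar coincidences may occur between consecutive $\C$'s). The paper's fix is the one you almost wrote down: the non-inclusion $\C_{m,k}\not\subseteq\C_{m,k'}$ already forces $\C_{m,k}\neq\C_{m,k'}$, and then irreducibility together with $\dim\C_{m,k'}\ge\dim\C_{m,k}$ yields $\C_{m,k'}\not\subseteq\C_{m,k}$. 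The same reasoning handles $B_m\not\subseteq\C_{m,k}$ in the equal-codimension case you flagged at the end. With that adjustment your sketch is complete and matches the paper.
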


\begin{proof} 
If $l = 0$, then $\pi_{m}^{-1}(0)_{red} = B_m$ is irreducible, and there is nothing to prove. For $l \geq 1$, the stratification~\eqref{Stratification} and Proposition~\ref{PropDmk} tell us that \[\pi_{m}^{-1}(0)_{red}=\Big(\bigcup_{k=1}^l\C_{m,k}\Big) \cup B_m\] is a decomposition in closed irreducible subvarieties, and that the extra condition on $k$ comes from the fact that $\C_{m,k}=\emptyset$ for $m \geq \frac{k n_{j(k)}\bar{\beta}_{j(k)}}{e_1}$. We still need to prove that there are no inclusions between the closed sets in this union. We already have the following two non-inclusions (assuming that $\C_{m,k}, \C_{m,k'} \neq \emptyset$): 
\begin{enumerate}
	\item $\C_{m,k} \not\subset \C_{m,k'}$ for $k<k'\leq l,$ because $\C_{m,k'}\subset \{x_0^{(kn_1)}=0\}$ but $\C_{m,k}\not\subset \{x_0^{(kn_1)}=0\}$ by the definition of these closed sets; and
	\item $\C_{m,k} \not\subset B_m$ for all $k\leq l$ because $B_m\subset \{x_0^{(ln_1)}=0\}$ but $\C_{m,k}\not\subset \{x_0^{(ln_1)}=0\},$ again by definition of these sets.
\end{enumerate}
It remains to show that there are no inclusions in the other directions. This will follow from the following inequalities in codimensions, considered in $\co^{(g+1)(m+1)}$ (again assuming that $\C_{m,k}, \C_{m,k'} \neq \emptyset$):
\begin{enumerate}
	\item $\text{codim}(\C_{m,k})\geq\text{codim}(\C_{m,k'})$ for $k<k'\leq l$, and
	\item $\text{codim}(\C_{m,k})\geq \text{codim}(B_m)$ for all $k\leq l$.
\end{enumerate}
Indeed, the above non-inclusions tell us in particular that all these closed sets are not equal. Because they are also irreducible, the inequalities in codimensions imply that there are no inclusions in the other directions. \\  

We begin with remarking that the inequalities $\lbeta_2 > n_1\lbeta_1$ and $n_2 \geq 2$ imply that 
\begin{equation} \label{Ineq}
	\frac{hn_2\lbeta_2}{e_1} > (h+1)n_0n_1
\end{equation}
for all $h \geq 1$. In particular, we have $ln_0n_1 < m \leq (l+1)n_0n_1 < \frac{ln_2\lbeta_2}{e_1}$ such that $\C_{m,l} \neq \emptyset$ with $\text{codim}(\C_{m,l}) = c_{1,l}(m)$. One can now verify with the formulas of the codimension that $\text{codim}(\C_{m,l}) \geq \text{codim}(B_m)$, but this can also been seen from the following short argument. First, it is easy to check that $\C_{ln_0n_1+1,l}$ and $B_{ln_0n_1+1}$ have the same codimension (note that $\C_{ln_0n_1+1,l}\neq \emptyset$ of codimension $c_{1,l}(ln_0n_1 + 1)$). Second, for $n\in \n$ satisfying $ln_0n_1+2 \leq n \leq (l+1)n_0n_1 < \frac{ln_2\bar{\beta}_2}{e_1}$, it follows from Corollary~\ref{CorStructureBasis} that the equation $F_1^{(n)}$ contributes to the codimension of $B_n$ if and only if $n_0$ or $n_1$ divides $n$ (i.e., there is an extra variable $x_0^{(\frac{n}{n_0})}$ or $x_1^{(\frac{n}{n_1})}$ equal to $0$), while from the proof of Proposition~\ref{PropDmk}, $F_1^{(n)}$ always contributes to the codimension of $\C_{n,l}$. Additionally, the equations $F_j^{(n)}$ for $j=2,\ldots,g$ contribute to the codimension of $B_n$ if and only if they contribute to the codimension of $\C_{n,l}.$ Therefore, we have the inequality $\text{codim}(\C_{n,l})\geq \text{codim}(B_n)$ for $ln_0n_1 < n \leq (l+1)n_0n_1 < \frac{ln_2\bar{\beta}_2}{e_1}$, and in particular for $n=m$. \\

If $l=1$, we are done; we assume from now on that $l > 1$. For $k = 1, \ldots, l$, we define \[c_{k}(m) := k(n_0+n_1)+\sum_{l=2}^i\frac{k\bar{\beta}_l}{e_1}+ \sum_{l=1}^i\Big(m-\frac{kn_l\bar{\beta}_l}{e_1}+1\Big)+\sum_{l=i+1}^g\Big(\Big[\frac{m}{n_l}\Big]+1\Big),\] where $i \in \{1 , \ldots, g\}$ such that $\frac{kn_i\beta_i}{e_1} \leq m < \frac{kn_{i+1}\lbeta_{i+1}}{e_1}$. We can always find such a unique integer $i$ because $\lbeta_{g+1} = +\infty$ by convention. Furthermore, we know by Proposition~\ref{PropDmk} that $\text{codim}(\C_{m,k}) = c_{i,k}(m) = c_k(m)$ if $\C_{m,k} \neq \emptyset$ or ,equivalently, if $i < j(k)$. Consider now a fixed $k \in \{2, \ldots, l\}$. We will show that \[c_{k-1}(m) \geq c_k(m).\] This leads to the series of inequalities \[c_1(m) \geq c_2(m)\geq \cdots \geq c_l(m) = \text{codim}(\C_{m,l}) \geq \text{codim}(B_m),\] from which the above-mentioned inequalities in the codimension follow, and hence, the non-inclusions that we wanted to prove. In particular, our proof implies that $B_m$ is a component of maximal dimension, being of smallest codimension. \\

We first compare $c_{k-1}(kn_0n_1+1)$ and $c_k(kn_0n_1+1)$. From inequality~\eqref{Ineq} for $h = k-1$, it follows that $i = 1$ in $c_{k-1}(kn_0n_1)$, and that $i = 1$ in $c_{k-1}(kn_0n_1+1)$ if $kn_0n_1 + 1 < \frac{(k-1)n_2\lbeta_2}{e_1}$ or $i=2$ in $c_{k-1}(kn_0n_1+1)$ if $kn_0n_1 + 1 = \frac{(k-1)n_2\lbeta_2}{e_1}$. Using this, one can check that \[c_{k-1}(kn_0n_1+1) - c_{k-1}(kn_0n_1) \geq \text{codim}(B_{kn_0n_1+1}) - \text{codim}(B_{kn_0n_1}).\] With a same reasoning as before, one can see that $c_{k-1}(kn_0n_1) = \text{codim}(\C_{kn_0n_1,k-1})\geq\text{codim}(B_{kn_0n_1}).$ Together with $c_{k}(kn_0n_1+1) = \text{codim}(\C_{kn_0n_1+1,k}) = \text{codim}(B_{kn_0n_1+1})$, we obtain that \[c_{k-1}(kn_0n_1+1) \geq  c_k(kn_0n_1+1).\] Now, note that 
\begin{enumerate}
	\item the value of $c_k(n)$ increases when $n$ varies in the interval $\Big[\frac{kn_i\bar{\beta}_i}{e_1},\frac{kn_{i+1}\bar{\beta}_{i+1}}{e_1}\Big)\cap \mathbb{N}$ and it grows faster when $n$ varies in $\Big[\frac{kn_i\bar{\beta}_i}{e_1},\frac{kn_{i+1}\bar{\beta}_{i+1}}{e_1}\Big)\cap \mathbb{N}$ for greater $i$;
	\item the growing of $c_{k-1}(n)$ and $c_k(n)$ is completely the same when $n$ varies in the interval $\Big[\frac{(k-1)n_i\bar{\beta}_i}{e_1},\frac{(k-1)n_{i+1}\bar{\beta}_{i+1}}{e_1}\Big)\cap \mathbb{N}$ and $\Big[\frac{kn_i\bar{\beta}_i}{e_1},\frac{kn_{i+1}\bar{\beta}_{i+1}}{e_1}\Big)\cap \mathbb{N}$, respectively;
	\item the length of the interval $\Big[\frac{(k-1)n_i\bar{\beta}_i}{e_1},\frac{(k-1)n_{i+1}\bar{\beta}_{i+1}}{e_1}\Big)\cap \mathbb{N}$ is smaller than the length of $\Big[\frac{kn_i\bar{\beta}_i}{e_1},\frac{kn_{i+1}\bar{\beta}_{i+1}}{e_1}\big)\cap \mathbb{N}$.  
\end{enumerate}
 For these reasons, $c_{k-1}(n)$ grows faster with $n\geq kn_0n_1+1$ than $c_k(n).$ As $c_{k-1}(kn_0n_1+1) \geq  c_k(kn_0n_1+1)$, we can indeed deduce that $c_{k-1}(m) \geq c_k(m).$
\end{proof}

By comparing with Corollary 4.4 in~\cite{M1}, one can see that this theorem is a generalization of the case for $g=1$. More precisely, if $g = 1$, we can also stratify $\pi_m^{-1}(0)_{red}$ for all $m\in \n$ with $l \in \n$ such that $ln_0n_1 < m \leq (l+1)n_0n_1$ by 
\begin{equation} \label{StratificationForg=1}
 	\pi^{-1}_m(0)_{red} = \Big(\bigsqcup_{k=1}^l D_{m,k}\Big) \sqcup B_m,
\end{equation} where 
\begin{alignat*}{2}
	&D_{m,k} &&= \pi_{m,kn_0n_1}^{-1}(\{x_0^{(0)}=x_0^{(1)}=\cdots=x_0^{(kn_1-1)}=0\}\cap \{x_0^{(kn_1)}\not=0\})_{red}, \\
	&B_m &&= \pi_{m,ln_0n_1}^{-1}(\{x_0^{(0)}=x_0^{(1)}=\cdots=x_0^{(ln_1)}=0\})_{red}.
\end{alignat*}
The irreducible components are $C_{m,k} := \overline{D_{m,k}}$ for all $k = 1, \ldots, l$ and $B_m$. There is no extra condition on $k$ as all $D_{m,k}$ are non-empty. In other words, we can define $j(k) := 2$ for all $k \geq 1$. It is also easy to check that the codimension of $C_{m,k}$ is given by the formula of $c_{1,k}(m)$ in~\eqref{cik(m)} and that the codimension of $B_m$ is the same as in Corollary~\ref{CorDimBm}. In particular, $B_m$ is still a component of maximal dimension if $g = 1$. \\

As an easy corollary, we find the decomposition into irreducible components of $Y_m$ for all $g\geq 1$ and $m \geq 1$.

\begin{cor}\label{CorIrrComWhole}
Consider a space monomial curve $Y \subset \co^{g+1}$ defined by the equations~\eqref{EquationsY}. Consider $m\geq 1$, let $l\in \n$ be such that $ln_0n_1<m\leq(l+1)n_0n_1$ and let 
\begin{alignat*}{2}
	&D_{m,k} &&:= \pi_{m,kn_0n_1}^{-1}(\{x_0^{(0)}=x_0^{(1)}=\cdots=x_0^{(kn_1-1)}=0\}\cap \{x_0^{(kn_1)}\not=0\})_{red}, \quad k = 1, \ldots, l, \\
	&B_m &&:= \pi_{m,ln_0n_1}^{-1}(\{x_0^{(0)}=x_0^{(1)}=\cdots=x_0^{(ln_1)}=0\})_{red}.
\end{alignat*} 
The irreducible components of $Y_m$ are $\overline{\pi_m^{-1}(Y\setminus \{0\})}$, $\C_{m,k} := \overline{D_{m,k}}$ for $k=1,\ldots,l$ such that ${m < \frac{kn_{j(k)}\bar{\beta}_{j(k)}}{e_1}}$ and $B_m$. Furthermore, $B_m$ is a component of maximal dimension.
\end{cor}

\begin{proof}
Because $Y \setminus\{0\} \simeq \co \setminus\{0\}$ and $\pi_m$ restricted to $\pi^{-1}_m(Y \setminus \{0\})$ is a trivial fibration with fiber isomorphic to $\co^m$, we know that $\overline{\pi_m^{-1}(Y\setminus \{0\})}$ is irreducible of codimension $g(m+1)$ in $\co^{(g+1)(m+1)}$. As $\overline{\pi_m^{-1}(Y\setminus \{0\})}$ is trivially not contained in any of the components of $\pi_m^{-1}(0)$, it is now enough to prove the following inequalities in codimensions, considered in $\co^{(g+1)(m+1)}$ (assuming that $C_{m,k} \neq \emptyset$):
\begin{enumerate}
	\item $\text{codim}(\C_{m,k})\leq g(m+1)$ for $k\leq l$, and
	\item $\text{codim}(B_m)\leq g(m+1)$.
\end{enumerate}
In fact, we will show that strict inequality always holds in (1), while equality in (2) is possible if $g = 1$. \\

Recall from Proposition~\ref{PropDmk} that the codimension of $C_{m,k}$ (if non-empty) is given by $c_{i,k}(m)$ for $i \in \{1,\ldots, j(k)-1\}$ such that $\frac{kn_i\bar{\beta}_i}{e_1}\leq m<\frac{kn_{i+1}\bar{\beta}_{i+1}}{e_1}$. Because $n_l \geq 2$, this can be bounded from above by 
\begin{align*}
 c_{i,k}(m) &\leq k(n_0+n_1)+\sum_{l=2}^i\frac{k\bar{\beta}_l}{e_1}+ \sum_{l=1}^i\Big(m-\frac{kn_l\bar{\beta}_l}{e_1}+1\Big)+ (g-i)\Big(\frac{m}{2} + 1\Big) \\
 &= \frac{k}{e_1}\Big(\lbeta_0 - \sum_{l=1}^i(n_l-1)\lbeta_l \Big) + (g+i)\frac{m}{2} + g,
\end{align*}
where we also used that $n_0 = \frac{\lbeta_1}{e_1}$ and $n_1 = \frac{\lbeta_0}{e_1}$. Because $n_0>1$ and $n_1 > 1$ are coprime, we know that \[\lbeta_0 - (n_1-1)\lbeta_1  =  n_1\lbeta_1\Big(\frac{1}{n_0}  + \frac{1}{n_1} - 1\Big) < 0.\] It follows that $c_{i,k}(m) < (g+i)\frac{m}{2} + g$, which implies that $c_{i,k}(m) < g(m+1)$ as $i \leq g$. This proves the strict inequalities in (1). \\

If there exists a non-empty $C_{m,k}$, then strict inequality in (2) follows immediately from the fact that $\text{codim}(C_{m,k}) \geq \text{codim}(B_m)$, which was shown in Theorem~\ref{ThmIrrCom}. Otherwise, recall the codimension of $B_m$ determined in Corollary~\ref{CorDimBm} and let us first consider the case where $g = 1$. Because every $C_{m,k}$ is non-empty for $k = 1,\ldots,l$ if $l > 0$, it remains to study the case where $l = 0$. Then, $0 < m \leq n_0n_1$ and one can, for example with an easy induction argument, show that \[2 + \Big[\frac{m}{n_0}\Big] + \Big[\frac{m}{n_1}\Big] \leq m + 1,\] which implies the (possibly non-strict) inequality in (2). To conclude the proof for $g \geq 2$, it suffices to show that \[g + 1 + \sum_{i = 0}^g\Big[\frac{m}{n_i}\Big] < g(m+1).\] Since $n_i \geq 2$, this is trivially true for $m = 1$. For $m \geq 2$, we can use that $n_i \geq 2$ and $n_0 > n_1$ to find that
\begin{equation}\label{IneqIntPart}
 	\sum_{i = 0}^g\Big[\frac{m}{n_i}\Big] < (g+1)\frac{m}{2},
 \end{equation}
from which it is easy to see that the above inequality indeed holds.
\end{proof}

\begin{rem}\label{RemDimLct}
Because $B_m$ is a component of maximal dimension (equivalently, of minimal codimension) of $Y_m$ for $m\geq 1$, we can apply Mustata's formula~\cite[Corollary 3.4]{Mu1} to obtain the \emph{log canonical threshold} of the pair $(\mathbb{C}^{g+1},Y)$: \[lct(Y,\mathbb{C}^{g+1})= \min_{m \geq 0}\frac{\text{codim}(Y_m,\mathbb{C}^{(g+1)(m+1)})}{m+1}=\min\left(g,\min_{m\geq 1}\frac{\text{codim}(B_m,\mathbb{C}^{(g+1)(m+1)})}{m+1}\right).\] We will obtain its value in Corollary~\ref{lct} from the well-known fact that $-lct(Y,\mathbb{C}^{g+1})$ is the largest pole of the topological Igusa zeta function, see for instance~\cite[Theorem 3.5]{NX}. 
\end{rem} 

In Section~\ref{SpaceCurve}, we have defined the family $\eta:(\chi,0) \rightarrow (\co,0)$ whose special fiber is (the germ of) $Y$ and whose generic fiber is a plane branch, and which is equisingular as all curves have the same semigroup. This (flat) family is also equisingular in the sense that we have a simultaneous embedded resolution of all its fibers, see~\cite[Theorem 6.1]{GT} and~\cite[Theorem 33]{LMR}. A third type of equisingularity criterion is to ask if this family induces a flat family on the level of jet schemes in the following way. Put $S := (\co,0)$. For every $m \in \n$, we can consider the \emph{relative $m$th jet scheme} $((\chi,0)/S)_m$ of $\eta:(\chi,0) \rightarrow S$. These kinds of jet schemes can be defined using Hasse-Schmidt derivations and generalize the jet schemes $X_m = (X/\co)_m$ that we introduced for a complex affine variety in Section~\ref{JetMotivic}. See for example~\cite{V} for an introduction to relative jet schemes. This yields a natural morphism $\eta_m:((\chi,0)/S)_m \rightarrow S$ whose fibers are isomorphic to the $m$th jet schemes of the fibers of $\eta$, see~\cite[Proposition 5.6]{V}. In other words, this induces a family on the level of the $m$th jet schemes and we can investigate whether this is a flat family. It follows from~\cite[Corollary 4.13]{M1} that this family (with the reduced structure on its fibers) is flat outside the special fiber for every $g \geq 1$ and $m \in \n$. We will show below that the whole family $\eta_m:((\chi,0)/S)_m \rightarrow S$ is not flat for $g\geq 2$ and most values of $m \in \n$. This can be compared with the result in~\cite[Theorem 3.4]{LA} which states that a family of hypersurfaces admitting a simultaneous embedded resolution of singularities does induce a flat family on the level of jet schemes (with their reduced structures) for every $m$: in our case, while the generic fibers are hypersurfaces, the embedding dimension of $Y$ is $g.$  

\begin{theo}\label{TheoremNotFlat} For $g \geq 3$ and $m\geq 1,$  the family $\eta_m:((\chi,0)/S)_m \rightarrow S$  is not flat. For $g=2$ and $m$ big enough, the family  $\eta_m:((\chi,0)/S)_m \rightarrow S$ is not flat.
\end{theo}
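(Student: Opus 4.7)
The plan is to exhibit an irreducible component of the germ $(\chi_m,0)$ that lies entirely inside the special fiber $\{v_0=0\}$. Such a component forces $v_0$ to be a zero-divisor of $\mathcal{O}_{\chi_m,0}$, and hence $\eta\circ\phi_m$ to fail to be flat over the smooth one-dimensional base $(\mathbb{C},0)$.

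The natural candidate is $\phi_m^{-1}(0)\subset\chi_m$, the scheme of jets of $\chi$ centered at the singular origin $0\in\chi$; by definition $\phi_m^{-1}(0)\subset\{x_0^{(0)}=\cdots=x_g^{(0)}=0\}\cap\{v_0=0\}$, so it already sits in the special fiber. To estimate its dimension, observe that each defining equation $G_i$ of $\chi$ (the $v$-deformation of $f_i$) has every monomial of total degree at least two in the ambient variables $x_0,\ldots,x_g,v$; hence the jet equations $G_i^{(0)}$ and $G_i^{(1)}$ vanish identically on $\{x_j^{(0)}=v_0=0\}$, and only the at most $g(m-1)$ equations $G_i^{(l)}$ with $2\leq l\leq m$ may be effective. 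This yields
\[
 \dim\phi_m^{-1}(0)\;\geq\;(g+2)m-g(m-1)\;=\;2m+g.
\]
On the other hand, since $\chi$ is irreducible of dimension $2$, its main jet component $\overline{\phi_m^{-1}(\chi\setminus\{0\})}$ has dimension exactly $2m+2$.

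For $g\geq 3$, the inequality $2m+g>2m+2$ shows that $\phi_m^{-1}(0)$ cannot be contained in the main component, so it spans a separate irreducible component of $(\chi_m,0)$. To argue that this component does not leak into $\{v_0\neq 0\}$, one inspects the singular locus of $\chi$, which sits inside $\{x_0=\cdots=x_g=0\}$: the fiber $\phi_m^{-1}(p)$ over any point $p\neq 0$ of this locus has dimension at most $2m+1$, because the partial derivatives $\partial_{x_{i+1}}G_i$ are nonzero at such $p$ for $i<g$, promoting $m-1$ effective equations to $m$. Consequently an irreducible component of $\chi_m$ containing $\phi_m^{-1}(0)$ but extending to such a $p$ would have dimension at most $1+(2m+1)=2m+2<2m+g$, contradicting the bound above. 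Hence the component is contained in $\{v_0=0\}$ and the family is not flat.

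For $g=2$ the crude bound $2m+g=2m+2$ no longer strictly exceeds $2m+2$, so the overshoot is not automatic. A more careful count is required: one exploits the cascade of equations arising from the quadratic initial forms $(x_i^{(1)})^{n_i}$ and $-v_1 x_{i+1}^{(1)}$ of $G_i^{(2)}$, observing that once the cascade forces $x_g^{(1)}=0$, then $x_{g-1}^{(1)}=0$, and so on, many equations $G_i^{(l)}$ become identically trivial. Tracking the number of savings, one obtains a strict improvement $\dim\phi_m^{-1}(0)>2m+2$ as soon as $m$ exceeds an explicit threshold depending on the $n_i$'s, and the $g\geq 3$ argument then applies verbatim.

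The main technical obstacle is the upper bound on the dimension of irreducible components of $\chi_m$ through the origin, needed to rule out an extension of the containing component outside $\{v_0=0\}$. This requires identifying the singular locus of $\chi$ (essentially the $v$-axis) and carefully bounding jet fiber dimensions along it. For $g=2$ the dimensional margin is so tight that the cascade improvement is only decisive once $m$ is large enough, accounting for the ``$m$ big enough'' hypothesis in the statement.
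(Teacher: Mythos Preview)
Your argument contains a genuine gap at the step where you claim $\dim\phi_m^{-1}(p)\le 2m+1$ for $p=(0,\ldots,0,v)$ with $v\ne 0$. Counting ``effective'' jet equations only bounds the dimension from \emph{below}, never from above: on an ambient space of dimension $(g+2)m$ cut out by $gm$ equations one gets $\dim\ge 2m$, and observing that one of them (namely $G_g^{(1)}$) is identically zero raises this to $\dim\ge 2m+1$; it does not give $\le 2m+1$. In fact the bound is false. Near such a $p$ the equations $G_1,\ldots,G_{g-1}$ are smooth (their Jacobian in $x_2,\ldots,x_g$ is triangular with $-v$ on the diagonal), so locally $\chi$ is isomorphic to a hypersurface $\{\tilde G=0\}\subset\mathbb{C}^3_{x_0,x_1,w}$ whose slices $w=\mathrm{const}$ are all analytically the plane branch $\mathcal{C}$. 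Hence $\phi_m^{-1}(p)$ contains $(\pi_m^{\mathcal{C}})^{-1}(0)\times\mathbb{C}^m_{w}$, whose dimension is $m+\dim(\pi_m^{\mathcal{C}})^{-1}(0)\sim m+2m-\big[\tfrac{m}{\bar\beta_0}\big]-\big[\tfrac{m}{\bar\beta_1}\big]$, which already exceeds $2m+1$ for $m$ large. With that upper bound gone, you cannot rule out that the large component through $\phi_m^{-1}(0)$ extends across $\{v_0\ne 0\}$, and the non-flatness conclusion does not follow. The $g=2$ case is only sketched and inherits the same issue.

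The paper's proof avoids this entirely by working with the relative family whose fibre over $v$ is literally $(\chi_v)_m$: the special fibre is $Y_m$ and the generic fibre is isomorphic to the jet scheme of the plane branch $\mathcal{C}$. Both dimensions are then known \emph{exactly}---the former from Corollary~\ref{CorDimBm} (the component $B_m$ has maximal dimension), the latter from~\cite[Corollary~4.10]{M1}---and a direct comparison of the two explicit codimension formulas yields the strict inequality (with three separate cases $g\ge 4$, $g=3$, $g=2$). No component-chasing in the total space is needed, and the delicate upper bound you require on generic-fibre jet dimensions is replaced by a precise equality imported from the earlier structure theorems.
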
 

\begin{proof}
For $g \geq 2$ and $m$ satisfying the conditions in the statement, we will prove that the dimension of the special fiber of $\eta_m:((\chi,0)/S)_m \rightarrow S$ is strictly larger than the dimension of its generic fiber by showing that the codimension of its generic fiber is strictly larger than the codimension of its special fiber, both considered in $\mathbb{C}^{(g+1)(m+1)}.$ The codimension of the special fiber in $\mathbb{C}^{(g+1)(m+1)}$ is equal to the codimension of $B_m$, which we have found in Corollary~\ref{CorDimBm}. The codimension of the generic fiber in $\mathbb{C}^{2(m+1)}$ is given in Corollary 4.10 from~\cite{M1}; it is equal to \[ 2 + \Big[\frac{m}{\bar{\beta}_0}\Big] +\Big[\frac{m}{\bar{\beta}_1}\Big] \text ~~\text{ or }~~ 1+ \Big[\frac{m}{\bar{\beta}_0}\Big] +\Big[\frac{m}{\bar{\beta}_1}\Big],\] depending on some conditions on $m$. Since the jet schemes are independent of the embedding, we can compute the codimension of the generic fiber in $\mathbb{C}^{(g+1)(m+1)}$ by using the fact that the dimension is the same in both embedding spaces. More precisely, we have \[(g+1)(m+1)-(\text{codimension in } \mathbb{C}^{(g+1)(m+1)})= 2(m+1)-(\text{codimension in }\mathbb{C}^{2(m+1)}).\] We distinguish three cases. \\

\textbf{The case $g\geq 4$.}
It is enough to prove for every $m \geq 1$ that \[(g-1)(m+1)+1+\Big[\frac{m}{\bar{\beta}_0}\Big]+\Big[\frac{m}{\bar{\beta}_1}\Big] >g+1+ \sum_{i=0}^g \Big[\frac{m}{n_i}\Big], \] which is equivalent to 
\begin{equation}\label{TP}
 	(g-1)m > 1 + \sum_{i=0}^g \Big[\frac{m}{n_i}\Big] -\Big[\frac{m}{\bar{\beta}_0}\Big]-\Big[\frac{m}{\bar{\beta}_1}\Big].
 \end{equation}
Since $n_i \geq 2$, this is clearly true if $m = 1$. If $m \geq 2$, the upper bound~\eqref{IneqIntPart} in the proof of Corollary~\ref{CorIrrComWhole} yields
\begin{equation}\label{bound}
 	(g+1)\frac{m}{2}> \sum_{i=0}^g\Big[\frac{m}{n_i}\Big]-\Big[\frac{m}{\bar{\beta}_0}\Big]-\Big[\frac{m}{\bar{\beta}_1}\Big].
\end{equation}
Therefore, it is sufficient to show that $(g-1)m \geq 1 + (g+1)\frac{m}{2},$ which is true for $g\geq 4$ and $m \geq 2$.\\

\textbf{The case $g = 3$.}
For $m \geq \lbeta_0$, we can again prove the inequality~\eqref{TP}; because $[\frac{m}{\bar{\beta}_0}] \geq 1$, this follows from decreasing the upper bound in~\eqref{bound} to $2m - 1$. For $m < \lbeta_0$, the inequality~\eqref{TP} is not true in general. However, the codimension in $\mathbb{C}^{2(m+1)}$ of the generic fiber in this case is given by \[2 + \Big[\frac{m}{\bar{\beta}_0}\Big] +\Big[\frac{m}{\bar{\beta}_1}\Big]\] so that it is enough to show that \[2m > \Big[\frac{m}{n_0}\Big]+\Big[\frac{m}{n_1}\Big]+\Big[\frac{m}{n_2}\Big]+\Big[\frac{m}{n_3}\Big] -\Big[\frac{m}{\bar{\beta}_0}\Big]-\Big[\frac{m}{\bar{\beta}_1}\Big],\] which is true by~\eqref{bound}.\\

\textbf{The case $g = 2$.} 
In this case, our claim does not hold in general; it is easy to find examples in which the (co)dimension of the generic fiber and the special fiber are equal for some small $m\geq 1$. However, we will prove that for $m$ big enough, the claim does always hold. We again consider the inequality~\eqref{TP}. By using that $[\frac{m}{n}] \leq \frac{m}{n} < [\frac{m}{n}] +1$ for any positive integer $n$, it is enough to investigate \[m \geq  3 + \frac{m}{n_0}+\frac{m}{n_1}+\frac{m}{n_2} -\frac{m}{\bar{\beta}_0}-\frac{m}{\bar{\beta}_1}.\] Because $\bar{\beta}_0=n_1n_2$ and $\bar{\beta}_1=n_0n_2$ if $g = 2$, we can rewrite this as \[n_0n_1n_2m > 3n_0n_1n_2 + (n_1n_2 + n_0n_2 + n_0n_1 - n_0 - n_1)m,\] which is equivalent to $m \geq \frac{3n_0n_1n_2}{(n_2-1)(n_0n_1-n_0-n_1)}$ (note that $n_0n_1 - n_0 - n_1 > 0$ as $n_0 > n_1 \geq 2$ are coprime). Hence, for $m$ satisfying this lower bound, the codimension of the generic fiber is certainly bigger than the codimension of the special fiber.
\end{proof}

\section{Motivic zeta function of space monomial curves with plane semigroups} \label{Motivic}
	
Using the results from the previous section, we are now able to compute the series \[J_Y(T) = \sum_{m \geq 0}[Y_m](\Lmb^{-(g+1)}T)^{m+1}\] and to deduce the motivic Igusa zeta function of a space monomial curve $Y \subset \co^{g+1}$. \\
	
Assume first that $g \geq 2$. We start with recalling the stratification~\eqref{Stratification} of $\pi^{-1}_m(0)_{red}$ for $m \in \n$ and $l \in \n$ such that $ln_0n_1 < m \leq (l+1)n_0n_1$ given by \[\pi^{-1}_m(0)_{red} = \Big(\bigsqcup_{k=1}^l D_{m,k}\Big) \sqcup B_m,\] where 
\begin{alignat*}{2}
	&D_{m,k} &&= \pi_{m,kn_0n_1}^{-1}(\{x_0^{(0)}=x_0^{(1)}=\cdots=x_0^{(kn_1-1)}=0\}\cap \{x_0^{(kn_1)}\not=0\})_{red}, \\
	&B_m &&= \pi_{m,ln_0n_1}^{-1}(\{x_0^{(0)}=x_0^{(1)}=\cdots=x_0^{(ln_1)}=0\})_{red}.
\end{alignat*}
If $ln_0n_1 < m < (l+1)n_0n_1$, Corollary~\ref{CorStructureBasis} implies that $B_m \simeq \co^{(g+1)(m+1) - c(m)}$ with $c(m) := g + 1 + \sum_{i=0}^g [\frac{m}{n_i}].$ For $m = (l+1)n_0n_1$, the same corollary gives us the defining equations of $B_m$, which is not isomorphic to $\co^{(g+1)(m+1) - c(m)}$, but to the product of an affine space and the hypersurface defined by $x_1^{((l+1)n_0))^{n_1}} - x_0^{((l+1)n_1))^{n_0}} = 0$. However, the singular part of $B_{(l+1)n_0n_1}$, \[B_{(l+1)n_0n_1} \cap \{x_0^{((l+1)n_1)} = 0\},\] is isomorphic to $\co^{(g+1)((l+1)n_0n_1 + 1) - c((l+1)n_0n_1)}.$ Furthermore, it is easy to see that the regular part, \[B_{(l+1)n_0n_1} \cap \{x_0^{((l+1)n_1)} \neq 0\},\] is equal to $D_{(l+1)n_0n_1, l+1}$. Hence, if we define \[\mathcal{B}_m := \left\{\begin{array}{ll} \{0\} & \text{ if } m=0 \\ B_m \cap  \{x_0^{(ln_1)} = 0\} & \text{ if } m  = ln_0n_1 \text{ for some } l > 0 \\ B_m & \text{ if } ln_0n_1 < m < (l+1)n_0n_1 \text{ for some } l \geq 0 ,\end{array} \right. \] then each $\mathcal{B}_m \simeq \co^{(g+1)(m+1) - c(m)}$, and we find for all $m, l \in \n$ with $ln_0n_1 \leq m < (l+1)n_0n_1$ that \[\pi^{-1}_m(0)_{red} = \Big(\bigsqcup_{k=1}^l D_{m,k}\Big) \sqcup \mathcal{B}_m.\] This is a stratification in locally closed subvarieties for which $D_{m,k}=\emptyset$ if $m>\frac{kn_{j(k)}\lbeta_{j(k)}}{e_1}$. \\

These new stratifications can be visualized with a tree as in Figure~\ref{GeneralPicture}. On the vertical axis, we collect $\mathcal{B}_m$ for all $m \in \n$, but to shorten the notation, we only write $m$ instead of $\mathcal{B}_m$. This axis will be referred to as the \emph{main axis} or \emph{main branch}. For each $k \geq 1$, we construct a side branch at $kn_0n_1$ consisting of $D_{m,k}$ for all $m$ such that $kn_0n_1 \leq m < \frac{kn_{j(k)}\bar{\beta}_{j(k)}}{e_1},$ where $\lbeta_{g+1} = +\infty$. We again use a shorter notation and call this the \emph{side branch associated with $k$}. If $j(k) < g+1$, this side branch stops at $D_{m,k}$ for $m = \frac{kn_{j(k)}\lbeta_{j(k)}}{e_1} - 1$; if $j(k) = g+1$ (i.e., $n_2\cdots n_g$ divides $k$), this side branch never stops, and the part starting from $\frac{kn_g\lbeta_g}{e_1}$ is called the \emph{infinite branch associated with} $k$. In such a general picture, it is hard to give the side branches the correct length, and the tree should be interpreted as if the decomposition of $\pi^{-1}_m(0)_{red}$ for some $m\in \n$ can be reconstructed by drawing a horizontal line starting from the main axis and taking all intersections with the side branches. \\

\begin{figure}[h!] 
	\includegraphics[width=0.85\textwidth]{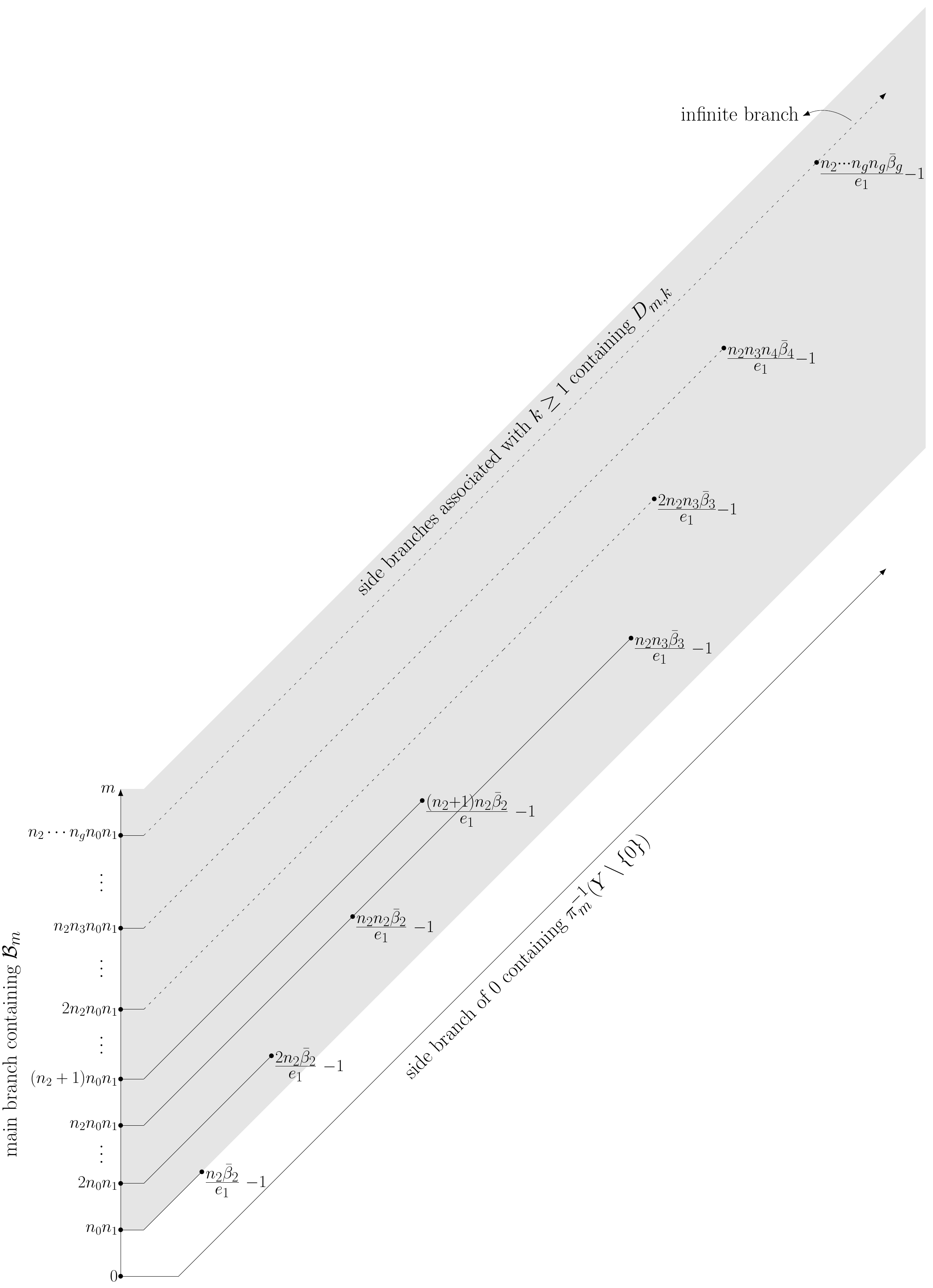}
	\caption{Visualization of the stratification of $Y_m$ for all $m \geq 0$.}
	\label{GeneralPicture}
\end{figure}

If we add a last side branch at $0$ containing $\pi^{-1}_m(Y \setminus \{0\})$ for every $m \in \n$, then the tree contains all information needed to compute $J_Y(T)$. More precisely, we have for every $m \in \n$ and $l \in \n$ satisfying $ln_0n_1 \leq m < (l+1)n_0n_1$ that \[[Y_m] =  [\pi^{-1}_m(Y \setminus \{0\})] + [\pi^{-1}_m(0)_{red}] = [\pi^{-1}_m(Y \setminus \{0\})] + [\mathcal{B}_m] + \sum_{k=1}^l[D_{m,k}].\] Hence, to sum $[Y_m]$ over all $m \geq 0$, we can first consider the side branch of $0$, the main branch, and the side branches for $k \geq 1$ separately, and then collect these totals to find the whole series $J_Y(T)$. \\

Let us first take a look at the side branch of $0$. Because $Y \setminus\{0\} \simeq \co \setminus\{0\}$ and $\pi_m$ induces a trivial fibration over $Y \setminus \{0\}$ with fiber $\co^m$, we have that $[\pi^{-1}_m(Y \setminus \{0\})] = (\Lmb-1)\Lmb^m$ by Remark~\ref{RemGrothRing}, and a simple calculation leads to the following expression. 	 
	
\begin{prop} \label{PropAboveReg}
	We have \[\sum_{m \geq 0}[\pi^{-1}_m(Y \setminus \{0\})](\Lmb^{-(g+1)}T)^{m+1} = \frac{(\Lmb-1)\Lmb^{-(g+1)}T}{1-\Lmb^{-g}T}.\]
\end{prop}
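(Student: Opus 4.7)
The plan is to invoke the explicit parametrization of $Y$ together with the known structure of jet schemes over the smooth locus in order to identify $[\pi_m^{-1}(Y\setminus\{0\})]$ with an explicit element of $\mathcal{M}_{\co}$, after which the stated identity becomes a geometric series.

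First, I would observe that the monomial map $M:\co\to\co^{g+1}$ with $M(t)=(t^{\lbeta_0},\ldots,t^{\lbeta_g})$ is a bijection onto $Y$ which restricts to an isomorphism $\co\setminus\{0\}\xrightarrow{\simeq} Y\setminus\{0\}$ (it is injective because $\gcd(\lbeta_0,\ldots,\lbeta_g)=1$, and its inverse on $Y\setminus\{0\}$ is regular since, for example, one can recover $t$ as a monomial in the coordinates). Hence $[Y\setminus\{0\}]=[\co\setminus\{0\}]=\Lmb-1$ in $K_0(\text{Var}_\co)$.

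Next, I would use that $Y\setminus\{0\}$ is smooth and one-dimensional, so the restriction of $\pi_m:Y_m\to Y$ to the preimage $\pi_m^{-1}(Y\setminus\{0\})$ is a Zariski-locally trivial fibration with fiber $\co^m$; this is the statement already recalled at the start of Section~\ref{Jets}. By Remark~\ref{RemGrothRing}, the class of the total space of such a fibration is the product of the classes of the base and the fiber, so
\[
[\pi_m^{-1}(Y\setminus\{0\})]=[Y\setminus\{0\}]\cdot[\co^m]=(\Lmb-1)\Lmb^m.
\]

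Finally, substituting this into the defining series and summing the resulting geometric series gives
\[
\sum_{m\geq 0}(\Lmb-1)\Lmb^m(\Lmb^{-(g+1)}T)^{m+1}
=(\Lmb-1)\Lmb^{-(g+1)}T\sum_{m\geq 0}(\Lmb^{-g}T)^m
=\frac{(\Lmb-1)\Lmb^{-(g+1)}T}{1-\Lmb^{-g}T},
\]
which is the claimed formula. There is no real obstacle here: every ingredient (the parametrization, the triviality over the smooth locus, and the multiplicativity of classes along locally trivial fibrations) has already been set up, and the remainder is a one-line manipulation of a geometric series.
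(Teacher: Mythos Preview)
Your proof is correct and follows exactly the same approach as the paper: identify $Y\setminus\{0\}\simeq\co\setminus\{0\}$ via the monomial parametrization, use the trivial $\co^m$-fibration over the smooth locus together with Remark~\ref{RemGrothRing} to get $[\pi_m^{-1}(Y\setminus\{0\})]=(\Lmb-1)\Lmb^m$, and then sum the geometric series. The paper gives precisely this argument in the paragraph immediately preceding the proposition.
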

	
The computations for the main axis are also easy. Let $N_1 := \lcm(n_0,\ldots, n_g)$ be the least common multiple of $n_0, \ldots, n_g$ and put $\nu_1 :=  \sum_{l=0}^g \frac{N_1}{n_l}$.

\begin{prop} \label{PropMainBranch}
	 The contribution of the main branch to $J_Y(T)$ is \[\sum_{m \geq 0}[\mathcal{B}_m](\Lmb^{-(g+1)}T)^{m+1} = \frac{\Lmb^{-(g+1)}T}{1- \Lmb^{-\nu_1}T^{N_1}}\sum_{r=0}^{N_1-1}\Lmb^{-\sum\limits_{i=0}^g [\frac{r}{n_i}]}T^r.\]
\end{prop}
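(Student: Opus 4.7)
The plan is to directly compute the sum by exploiting the explicit affine-space structure of $\mathcal{B}_m$ established via Corollary~\ref{CorStructureBasis}, combined with the periodicity of the exponent $\sum_i [\frac{m}{n_i}]$ modulo $N_1 = \lcm(n_0,\ldots,n_g)$.

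First, by the definition of $\mathcal{B}_m$ and Corollary~\ref{CorStructureBasis}, one has $\mathcal{B}_m \simeq \co^{(g+1)(m+1)-c(m)}$ with $c(m) = g+1+\sum_{i=0}^g\left[\frac{m}{n_i}\right]$ for every $m\geq 0$. Consequently,
\[
[\mathcal{B}_m](\Lmb^{-(g+1)}T)^{m+1} = \Lmb^{-c(m)}T^{m+1} = \Lmb^{-(g+1)}\Lmb^{-\sum_{i=0}^g[\frac{m}{n_i}]}T^{m+1},
\]
so that after factoring out $\Lmb^{-(g+1)}T$ the problem reduces to showing
\[
\sum_{m\geq 0}\Lmb^{-\sum_{i=0}^g[\frac{m}{n_i}]}T^m \;=\; \frac{1}{1-\Lmb^{-\nu_1}T^{N_1}}\sum_{r=0}^{N_1-1}\Lmb^{-\sum_{i=0}^g[\frac{r}{n_i}]}T^r.
\]

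Next, I would use that $n_i \mid N_1$ for each $i=0,\ldots,g$. Writing $m = qN_1+r$ with $q\geq 0$ and $0\leq r<N_1$, one has $\left[\frac{m}{n_i}\right] = \frac{qN_1}{n_i} + \left[\frac{r}{n_i}\right]$, and summing over $i$ gives
\[
\sum_{i=0}^g\left[\frac{m}{n_i}\right] = q\sum_{i=0}^g\frac{N_1}{n_i} + \sum_{i=0}^g\left[\frac{r}{n_i}\right] = q\nu_1 + \sum_{i=0}^g\left[\frac{r}{n_i}\right].
\]
Substituting this into the sum and separating the $q$- and $r$-parts factors it as a product
\[
\left(\sum_{q\geq 0}\Lmb^{-q\nu_1}T^{qN_1}\right)\left(\sum_{r=0}^{N_1-1}\Lmb^{-\sum_{i=0}^g[\frac{r}{n_i}]}T^r\right),
\]
and summing the geometric series in $q$ yields the desired identity. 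Multiplying back by $\Lmb^{-(g+1)}T$ completes the computation.

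There is essentially no obstacle: all the real work (the geometry of $\mathcal{B}_m$) has already been done in Corollary~\ref{CorStructureBasis}, and what remains is the elementary observation that $\sum_i \left[\frac{m}{n_i}\right]$ is quasi-periodic in $m$ with period $N_1$ and linear increment $\nu_1$. The only point worth noting is that the redefinition of $\mathcal{B}_m$ at the indices $m=ln_0n_1$ (removing the regular part $\{x_0^{(ln_1)}\neq 0\}$ of $B_{ln_0n_1}$, which has been moved to the side branch $D_{ln_0n_1,l}$) is precisely what makes $\mathcal{B}_m$ an affine space for every $m$, and hence what allows the uniform formula $[\mathcal{B}_m]=\Lmb^{(g+1)(m+1)-c(m)}$ to hold without exception.
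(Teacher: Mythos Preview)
Your proposal is correct and follows essentially the same approach as the paper: both reduce to computing $\sum_{m\geq 0}\Lmb^{-\tilde c(m)}T^m$ with $\tilde c(m)=\sum_i[\tfrac{m}{n_i}]$, observe that $\tilde c(m+N_1)=\tilde c(m)+\nu_1$ because each $n_i$ divides $N_1$, and then split the sum according to residues modulo $N_1$ to factor out a geometric series. Your write-up is in fact slightly cleaner than the paper's (you use distinct symbols $q,r$ for the quotient and remainder), and your closing remark about why the redefinition of $\mathcal{B}_m$ at $m=ln_0n_1$ is needed is a helpful clarification.
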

	
\begin{proof}
Because $\mathcal{B}_m \simeq \co^{(g+1)(m+1) - c(m)}$, we need to compute \[\sum\limits_{m\geq 0}\Lmb^{-c(m)}T^{m+1} = \Lmb^{-(g+1)}T\sum\limits_{m\geq 0}\Lmb^{-\tilde{c}(m)}T^m,\] where $c(m) = g+1 + \sum_{i=0}^g [\frac{m}{n_i}]$ and $\tilde{c}(m) = \sum_{i=0}^g [\frac{m}{n_i}]$. To this end, note that \[\nu_1 = \tilde{c}(m + N_1) - \tilde{c}(m)\] for all $m \in \n$. Hence, we can rewrite 	
\begin{align*}
	\sum\limits_{m\geq 0}\Lmb^{-\tilde{c}(m)}T^m 
	&= \sum_{r=0}^{N_1-1} \sum_{m\geq 0} \Lmb^{-\tilde{c}(mN_1 + r)}T^{mN_1 + r} \\
	&= \sum_{r=0}^{N_1-1} \sum_{m\geq 0} \Lmb^{-\left(m\nu_1 + \tilde{c}(r)\right)}T^{mN_1 + r} \\
	& = \frac{1}{1- \Lmb^{-\nu_1}T^{N_1}}\sum_{r=0}^{N_1-1}\Lmb^{-\tilde{c}(r)}T^r,
\end{align*}
which gives the desired expression.
\end{proof}

\begin{rem}
 In the proof of Proposition~\ref{PropMainBranch}, we found that $\nu_1 = \tilde{c}(m + N_1) - \tilde{c}(m)$ for all $m \in \n$ by looking for a positive integer $N$ such that $\tilde{c}(m)$ is \emph{linear on congruence classes modulo} $N$. That is, \[\tilde{c}(m  + N) = \tilde{c}(m) + \tilde{c}(N)\] for all $m \in \n$. In order to make $\tilde{c}(m) = \sum_{i=0}^g [\frac{m}{n_i}]$ linear on congruence classes modulo $N$ for any choice of $n_0, \ldots, n_g$, we need to impose that $n_i$ divides $N$ for all $i = 0, \ldots, g$. Clearly, $N = N_1 = \lcm(n_0, \ldots, n_g)$ is the smallest integer satisfying this condition. In fact, the \lq period\rq\ $N$ could be any common multiple of $n_0, \ldots, n_g$. This does not make any difference for the poles of the motivic zeta function because the ratio $\frac{\tilde{c}(N)}{N}$ stays the same. However, it is more natural to take the smallest period as this leads to the smallest remaining sum $\sum_{r = 0}^{N-1}\Lmb^{-\sum_{i=0}^g [\frac{r}{n_i}]}T^r$.
 \end{rem}
 	
The rest of this section will be mainly devoted to the contribution of the side branches associated with $k \geq 1$, which is by Proposition~\ref{PropDmk} given by \[\sum_{k \geq 1} \sum_{i=1}^{j(k)-1} \sum_{m=\frac{kn_i\lbeta_i}{e_1}}^{\frac{kn_{i+1}\lbeta_{i+1}}{e_1}-1}  [D_{m,k}](\Lmb^{-(g+1)}T)^{m+1} = \sum_{k \geq 1} \sum_{i=1}^{j(k)-1} \sum_{m=\frac{kn_i\lbeta_i}{e_1}}^{\frac{kn_{i+1}\lbeta_{i+1}}{e_1}-1} (\Lmb-1) \Lmb^{-(c_{i,k}(m) + 1)}T^{m+1},\] where $c_{i,k}(m)$ is defined in~\eqref{cik(m)}. Let us consider for a moment the part where $i=1$ for all $k \geq 1$: \[\sum_{k \geq 1} \sum_{m=kn_0n_1}^{\frac{kn_2\lbeta_2}{e_1}-1}  (\Lmb-1) \Lmb^{-(c_{1,k}(m) + 1)}T^{m+1}.\] Each interval $\big[kn_0n_1,\frac{kn_2\lbeta_2}{e_1}\big[\cap \n$ can be partitioned in $k$ intervals \[I_{1,k}^{(p)} := \bigg[kn_0n_1 + (p-1)\Big(\frac{n_2\lbeta_2}{e_1} - n_0n_1 \Big), kn_0n_1 + p\Big(\frac{n_2\lbeta_2}{e_1} - n_0n_1 \Big) \bigg[ \cap \n,\] $p=1, \ldots, k$, of the same length $l_1 := \frac{n_2\lbeta_2}{e_1} - n_0n_1$ as in Figure~\ref{IntervalsPicture}. Using these intervals, the part for $i=1$ can be rewritten as \[\sum_{p \geq 1}\sum_{\kappa \geq 0} \sum_{I_{1,p+\kappa}^{(p)}}(\Lmb-1)\Lmb^{-(c_{1,p+\kappa}(m)+1)}T^{m+1},\] where, from now on, a sum over all $m \in I$ for some interval $I \subset \n$ is written in a shorter way as $\sum_{I}$. We will first concentrate on computing the sum over all $\kappa$ and $m$ for $p$ fixed. In other words, we will first sum for each interval $I_{1,k}^{(p)}$ over all \emph{suitable} $k$, meaning that $I_{1,k}^{(p)}$ appears in the partition of $\big[kn_0n_1,\frac{kn_2\lbeta_2}{e_1}\big[\cap \n$. We will refer to this as \emph{vertical summation} inspired by Figure~\ref{IntervalsPicture}. Afterwards, we will sum these totals over all $p \geq 1$, called \emph{horizontal summation}.  \\
	
For $i = 2, \ldots, g-1$, we need to consider all $k \geq 1$ with $i < j(k)$ or, in other words, all multiples of $n_2\ldots n_i$. For each such $k = k' n_2 \cdots n_i$, we can partition the interval $\big[\frac{kn_i\lbeta_i}{e_1}, \frac{kn_{i+1}\lbeta_{i+1}}{e_1}\big[\cap \n$ in $k'$ intervals \[I_{i,k}^{(p)} := \bigg[\frac{kn_i\lbeta_i}{e_1} + (p-1)\Big(\frac{n_{i+1}\lbeta_{i+1}}{e_i} - \frac{n_i\lbeta_i}{e_i} \Big), \frac{kn_i\lbeta_i}{e_1} + p\Big(\frac{n_{i+1}\lbeta_{i+1}}{e_i} - \frac{n_i\lbeta_i}{e_i} \Big) \bigg[\cap \n,\] $p=1, \ldots, k'$, of length $l_i := n_2\cdots n_i\big(\frac{n_{i+1}\lbeta_{i+1}}{e_1} - \frac{n_i\lbeta_i}{e_1}\big) = \frac{n_{i+1}\lbeta_{i+1}}{e_i} - \frac{n_i\lbeta_i}{e_i}$. With this notation, the part for $i$ is equal to \[\sum_{p \geq 1}\sum_{\kappa \geq 0} \sum_{I_{i,(p+\kappa)n_2\cdots n_i}^{(p)}}(\Lmb-1)\Lmb^{-(c_{i,(p+\kappa)n_2\cdots n_i}(m)+1)}T^{m+1}.\] We will again first sum vertically (for fixed $p$) and then horizontally (over $p\geq1$). \\
	
\begin{figure}[h!] 
	\includegraphics[width=0.7\textwidth]{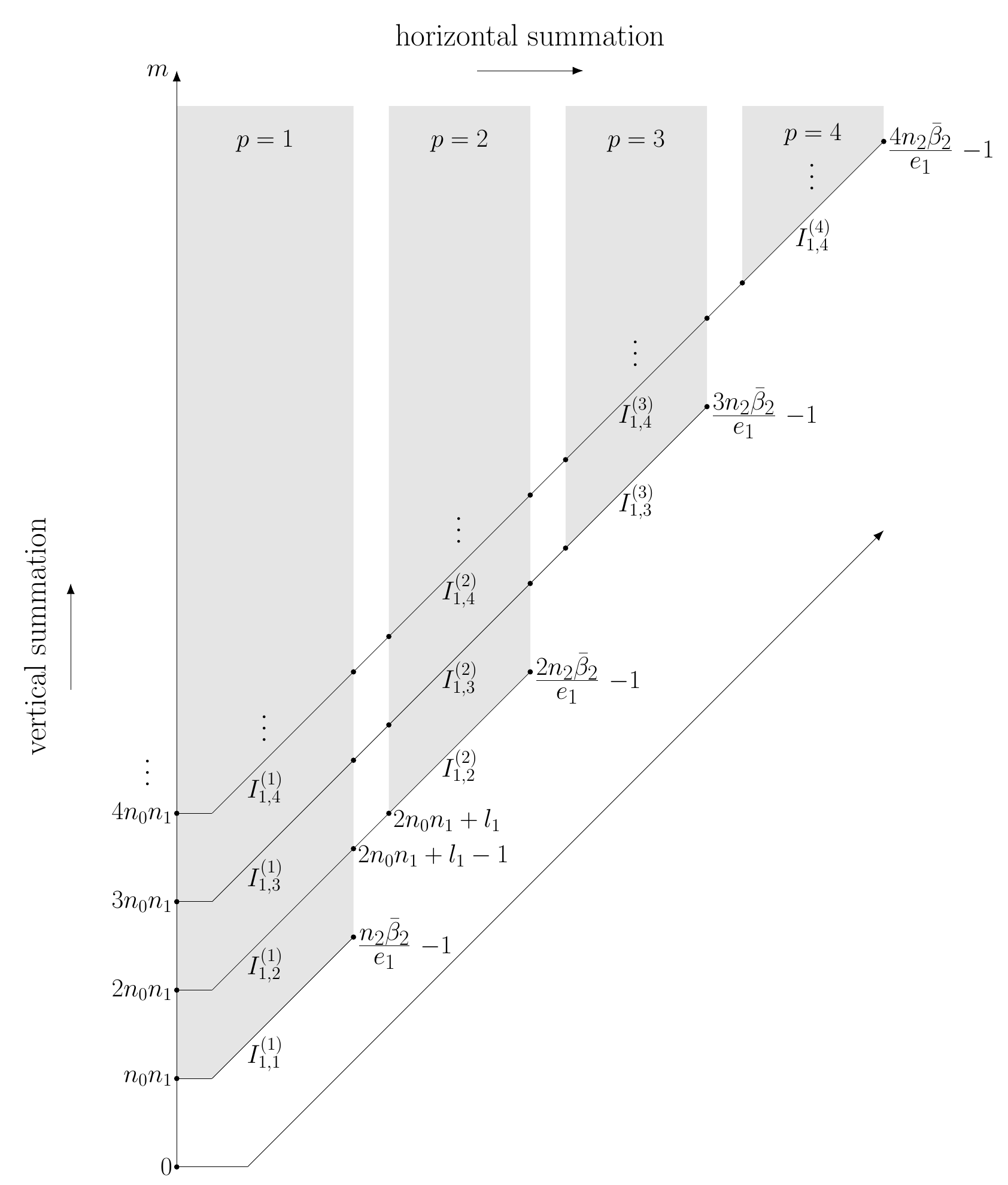}
	\caption{Visualization of the intervals $I_{1,k}^{(p)}$ for $k \geq 1$ and $p \leq k$.}
	\label{IntervalsPicture}
\end{figure}

For $i = g$, we study the infinite branches for $k = k' n_2 \cdots n_g$ and $j(k) = g+1$. With \[I_{g,k}^{(1)} := \bigg[\frac{kn_g\lbeta_g}{e_1}, + \infty \bigg[ \cap \n,\] the infinite branches lead to \[\sum_{\kappa \geq 0} \sum_{I_{g,(1+\kappa)n_2\cdots n_g}^{(1)}}(\Lmb-1)\Lmb^{-(c_{g,(1+\kappa)n_2\cdots n_g}(m)+1)}T^{m+1}.\] This consists of only one vertical sum. \\
	
To find the vertical summation, we start with a lemma. Recall that $N_1$ is the least common multiple of $n_0,\ldots,n_g$ and that $\nu_1 = \sum_{l=0}^g \frac{N_1}{n_l}$. We can generalize these numbers by introducing for $i = 1,\ldots, g$ the positive integers \[N_i := \lcm\Big(\frac{\lbeta_i}{e_i}, n_i,\ldots, n_g\Big),\] and \[\nu_i := N_i\left(\frac{1}{n_i\lbeta_i} \bigg(\sum_{l=0}^i\lbeta_l - \sum_{l=1}^{i-1}n_l\lbeta_l\bigg) + (i-1)+ \sum_{l=i+1}^g \frac{1}{n_l}\right);\] for $i=1$, we recover the earlier expressions since $n_0 = \frac{\lbeta_1}{e_1}$ and $n_1 = \frac{\lbeta_0}{e_1}$. Using the definition~\eqref{cik(m)} of $c_{i,k}(m)$, the next lemma is a straightforward calculation.
	
\begin{lem} \label{LemmaVerSum}
\begin{enumerate}[wide, labelindent=0pt]
	\item For all $k \geq 1$ and $0 \leq m < kl_1=k\big(\frac{n_2\lbeta_2}{e_1}-n_0n_1\big)$, we have \[\nu_1 = c_{1,k+\frac{N_1}{n_0n_1}}\Big(\big(k+\frac{N_1}{n_0n_1}\big)n_0n_1+m\Big) - c_{1,k}(k n_0n_1+m).\]
	\item For $i=2, \ldots, g$ and $k \geq 1$, we have for every $m\in \n$ lying in the interval $\big[0, k l_i[ = [0, k n_2\cdots n_i\big(\frac{n_{i+1}\lbeta_{i+1}}{e_1} - \frac{n_i\lbeta_i}{e_1}\big) \big[$ that \[\nu_i = c_{i,(k+\frac{e_iN_i}{n_i\lbeta_i})n_2\cdots n_i}\Big(\big(k+\frac{e_iN_i}{n_i\lbeta_i}\big)n_2 \cdots n_i\frac{n_i\lbeta_i}{e_1}+m\Big) - c_{i,k n_2\cdots n_i}\Big(k n_2 \cdots n_i\frac{n_i\lbeta_i}{e_1}+m\Big).\] 
\end{enumerate}
\end{lem}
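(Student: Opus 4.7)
The proof is a direct substitution of the definition~\eqref{cik(m)} of $c_{i,k}(m)$ on both sides of each identity, followed by simplification. Two preliminary divisibility observations make the floor-function terms collapse cleanly. First, $\gcd(n_0,n_1) = 1$ forces $n_0n_1 = \lcm(n_0,n_1) \mid N_1$, so the shift $\tfrac{N_1}{n_0n_1}$ in part (1) is a positive integer; similarly $\gcd(n_i,\tfrac{\lbeta_i}{e_i}) = 1$ gives $\tfrac{n_i\lbeta_i}{e_i} = \lcm(n_i,\tfrac{\lbeta_i}{e_i}) \mid N_i$, so the shift $\tfrac{e_iN_i}{n_i\lbeta_i}$ in part (2) is a positive integer. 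Second, since $n_l \mid N_1$ for every $l = 0, \ldots, g$ and $n_l \mid N_i$ for every $l = i, \ldots, g$, any term of the form $\bigl[\tfrac{a + bN}{n_l}\bigr]$ with $\tfrac{bN}{n_l}\in\n$ equals $\bigl[\tfrac{a}{n_l}\bigr] + \tfrac{bN}{n_l}$.

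For part (1), set $K := k + \tfrac{N_1}{n_0n_1}$. The difference $c_{1,K}(Kn_0n_1 + m) - c_{1,k}(kn_0n_1 + m)$ splits termwise into $(K-k)(n_0+n_1) = \tfrac{N_1}{n_1} + \tfrac{N_1}{n_0}$ from the $k(n_0+n_1)$ piece, $0$ from the $(m - kn_0n_1 + 1)$ piece, and $\sum_{l=2}^g \tfrac{N_1}{n_l}$ from the floor-function piece; the total is $\sum_{l=0}^g \tfrac{N_1}{n_l} = \nu_1$. For part (2), write $K := k + \tfrac{e_iN_i}{n_i\lbeta_i}$ and abbreviate $k' := kn_2\cdots n_i$, $K' := Kn_2\cdots n_i$, so that (using $n_2\cdots n_i = \tfrac{e_1}{e_i}$) one gets $K'-k' = \tfrac{e_1N_i}{n_i\lbeta_i}$ and the corresponding shift of the second argument of $c_{i,\cdot}$ equals $(K'-k')\tfrac{n_i\lbeta_i}{e_1} = N_i$. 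Splitting $c_{i,K'}(\cdot) - c_{i,k'}(\cdot)$ into its four pieces from~\eqref{cik(m)} and using $n_0+n_1 = \tfrac{\lbeta_0 + \lbeta_1}{e_1}$ produces
\[
(K'-k')\left[\tfrac{1}{e_1}\Bigl(\sum_{l=0}^i \lbeta_l - \sum_{l=1}^{i-1} n_l\lbeta_l\Bigr) - \tfrac{n_i\lbeta_i}{e_1}\right] + iN_i + \sum_{l=i+1}^g \tfrac{N_i}{n_l};
\]
substituting $K'-k' = \tfrac{e_1N_i}{n_i\lbeta_i}$ collapses this to exactly the defining expression for $\nu_i$.

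The only real obstacle is bookkeeping: tracking the four summands of~\eqref{cik(m)} and the two scales $k$ versus $k' = kn_2\cdots n_i$ without confusion. The hypothesis $0 \leq m < kl_i$ (respectively $m < kl_1$) is not used in the arithmetic itself; it simply guarantees that both shifted first arguments appearing in the statement lie in the interval on which the specific formula~\eqref{cik(m)} (with the chosen index $i$) is the correct expression for $c_{i,\cdot}(\cdot)$, so that the claimed equality makes sense as written.
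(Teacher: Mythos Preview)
Your proposal is correct and follows the same approach as the paper, which simply states that the lemma ``is a straightforward calculation'' using the definition~\eqref{cik(m)} of $c_{i,k}(m)$. Your write-up fills in precisely the termwise bookkeeping that the paper omits, including the key divisibility observations that make the floor terms split cleanly and the identification $n_2\cdots n_i = e_1/e_i$ that links the two scales; there is nothing to add.
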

	
Note that $K_1 := \frac{N_1}{n_0n_1}$ is an integer as $n_0$ and $n_1$ are coprime divisors of $N_1$. Furthermore, it is the smallest integer (for general $n_0, \ldots, n_g$) such that $K_1n_0n_1$ is divisible by $n_2,\ldots, n_g$ or, in other words, such that the sum $\sum_{l=2}^g[\frac{m}{n_l}]$ in $c_{1,k}(m)$ is linear on congruence classes modulo $K_1n_0n_1$. Similarly, every $K_i := \frac{e_iN_i}{n_i\lbeta_i}$ for $i=2, \ldots, g$ is the smallest integer (for general $n_0, \ldots, n_g$) making $\sum_{l=i+1}^g[\frac{m}{n_l}]$ linear modulo $K_in_2 \cdots n_i\frac{n_i\lbeta_i}{e_1} = K_i\frac{n_i\lbeta_i}{e_i}$. This idea was used in the proof of Proposition~\ref{PropMainBranch}, and we will continue following the approach of this proof to show the results in the next proposition. The first two results say that for each $i = 1, \ldots, g-1$ and $p \geq 1$, we only need to know the behavior on the first $K_i$ side branches associated with suitable $k$ (i.e., the partition of the side branch of $k$ contains $I_{i,k}^{(p)}$) in order to know the whole vertical summation. This motivates our choice for the smallest integers $K_i$, and it is again easy to check that this does not influence the ratios $\frac{\nu_i}{N_i}$. \\

\begin{prop} \label{PropVerSum}
\begin{enumerate} [wide, labelindent=0pt]
	\item For $i = 1$ and all $p \geq 1$, the vertical summation gives \[\frac{\Lmb-1}{1-\Lmb^{-\nu_1}T^{N_1}}\sum_{r=0}^{\frac{N_1}{n_0n_1}-1}\sum_{I_{1,p+r}^{(p)}}\Lmb^{-(c_{1,p+r}(m)+1)}T^{m+1}.\]
	\item For every $i = 2, \ldots, g-1$ and $p \geq 1$, the vertical summation gives \[\frac{\Lmb - 1}{1-\Lmb^{-\nu_i}T^{N_i}}\sum_{r=0}^{\frac{e_iN_i}{n_i\lbeta_i}-1} \sum_{I_{i,(p+r)n_2\cdots n_i}^{(p)}} \Lmb^{-(c_{i,(p+r)n_2\cdots n_i}(m) + 1)}T^{m+1}.\]
	\item For the infinite branches, we find \[\frac{(\Lmb-1)\Lmb^{-(\nu_g + g + 1)}T^{N_g + 1}}{(1-\Lmb^{-g}T)(1-\Lmb^{-\nu_g}T^{N_g})}.\]
\end{enumerate}
\end{prop}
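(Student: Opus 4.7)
The plan is to treat the three parts uniformly, in direct analogy with the proof of Proposition~\ref{PropMainBranch}: in each case, the vertical sum will split into a geometric progression whose ratio is encoded by Lemma~\ref{LemmaVerSum}. For part~(1), I would fix $p \geq 1$ and parameterize the suitable indices as $k = p + \kappa$ with $\kappa \geq 0$, then write $\kappa = qK_1 + r$ with $K_1 = N_1/(n_0n_1)$ and $0 \leq r < K_1$. Shifting $k \mapsto k + K_1$ translates the interval $I_{1,k}^{(p)}$ bodily by $K_1 n_0n_1 = N_1$, so iteratively $m \in I_{1,p+qK_1+r}^{(p)}$ if and only if $m - qN_1 \in I_{1,p+r}^{(p)}$. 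Iterating Lemma~\ref{LemmaVerSum}(1) then gives
\[
c_{1,p+qK_1+r}(m) \;=\; c_{1,p+r}(m - qN_1) + q\nu_1,
\]
hence
\[
\Lmb^{-(c_{1,p+qK_1+r}(m)+1)}T^{m+1} \;=\; \bigl(\Lmb^{-\nu_1}T^{N_1}\bigr)^q \,\Lmb^{-(c_{1,p+r}(m-qN_1)+1)}T^{m-qN_1+1}.
\]
Summing first over $q \geq 0$ produces the factor $(1-\Lmb^{-\nu_1}T^{N_1})^{-1}$, and the remaining sum over $r \in \{0,\ldots,K_1-1\}$ and $m \in I_{1,p+r}^{(p)}$ is exactly the claimed expression. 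Part~(2) will be a verbatim repetition with $(K_1,N_1,\nu_1)$ replaced by $(K_i,N_i,\nu_i)$ for $K_i = e_iN_i/(n_i\lbeta_i)$ and the suitable indices being $k = (p+\kappa)n_2\cdots n_i$, using now Lemma~\ref{LemmaVerSum}(2).

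Part~(3) will involve a second, nested geometric progression. I would fix $k' \geq 1$ and set $k = k'e_1$. From~\eqref{cik(m)} with $i = g$, the function $c_{g,k}(m)$ is linear in $m$ with slope $g$ on $I_{g,k}^{(1)} = [k'n_g\lbeta_g,+\infty)\cap\n$, so
\[
\sum_{m \in I_{g,k}^{(1)}} \Lmb^{-(c_{g,k}(m)+1)}T^{m+1} \;=\; \frac{\Lmb^{-(c_{g,k'e_1}(k'n_g\lbeta_g)+1)}T^{k'n_g\lbeta_g+1}}{1-\Lmb^{-g}T}.
\]
Since $e_g = 1$ forces $\gcd(n_g,\lbeta_g) = 1$, we have $N_g = n_g\lbeta_g$ and $K_g = 1$; Lemma~\ref{LemmaVerSum}(2) applied at $i=g$ with $m'=0$ therefore gives $c_{g,k'e_1}(k'n_g\lbeta_g) = c_{g,e_1}(n_g\lbeta_g) + (k'-1)\nu_g$. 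Summing the resulting geometric series over $k' \geq 1$ (with ratio $\Lmb^{-\nu_g}T^{N_g}$) will then yield
\[
V_g \;=\; \frac{(\Lmb-1)\,\Lmb^{-(c_{g,e_1}(n_g\lbeta_g)+1)}T^{N_g+1}}{(1-\Lmb^{-g}T)(1-\Lmb^{-\nu_g}T^{N_g})}.
\]

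The one computational point that is not purely formal -- and which I expect to be the main bookkeeping obstacle -- is verifying that $c_{g,e_1}(n_g\lbeta_g) = \nu_g + g$, needed for the numerator of $V_g$ to reduce to the stated $\Lmb^{-(\nu_g+g+1)}T^{N_g+1}$. Expanding both sides using~\eqref{cik(m)} and the definition of $\nu_g$ and cancelling common terms should reduce this identity to $\lbeta_0 + \lbeta_1 = e_1(n_0+n_1)$, which is immediate from $e_1 = n_2\cdots n_g$ together with $n_1 = \lbeta_0/e_1$ and $n_0 = \lbeta_1/e_1$. An analogous initial-value check is automatic in parts~(1) and~(2) because the starting term of each geometric progression is left explicit in the stated formula. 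Thus the whole argument will be driven by the same linearity-modulo-period mechanism appearing in Proposition~\ref{PropMainBranch}, applied level by level along the tree of Figure~\ref{GeneralPicture}.
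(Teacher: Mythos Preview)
Your proposal is correct and follows essentially the same approach as the paper: decompose each vertical sum via the period $K_i$ furnished by Lemma~\ref{LemmaVerSum}, extract the resulting geometric factor $(1-\Lmb^{-\nu_i}T^{N_i})^{-1}$, and for part~(3) additionally use the linearity of $c_{g,k}(m)$ in $m$ to produce the factor $(1-\Lmb^{-g}T)^{-1}$. The only cosmetic difference is the order of the two summations in part~(3) and the fact that you spell out the identity $c_{g,e_1}(n_g\lbeta_g)=\nu_g+g$ (which the paper absorbs into the single line $c_{g,n_2\cdots n_g}(n_g\lbeta_g+m)=\nu_g+g+gm$); your reduction of this identity to $\lbeta_0+\lbeta_1=e_1(n_0+n_1)$ is exactly right.
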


\begin{proof}
For the first part, we need to consider \begin{align*}
	&\sum_{\kappa \geq 0} \sum_{I_{1,p+\kappa}^{(p)}}(\Lmb-1)\Lmb^{-(c_{1,p+\kappa}(m)+1)}T^{m+1} \\
	& = \sum_{\kappa \geq 0}\sum_{m = (p-1)l_1}^{pl_1-1}(\Lmb-1)\Lmb^{-(c_{1,p+\kappa}((p+\kappa)n_0n_1 + m)+1)}T^{(p+\kappa)n_0n_1 + m + 1}
\end{align*}
As in Proposition~\ref{PropMainBranch}, we can rewrite this sum with the period $K_1 =  \frac{N_1}{n_0n_1}$ found in Lemma~\ref{LemmaVerSum} as \[\frac{\Lmb-1}{1-\Lmb^{-\nu_1}T^{N_1}}\sum_{r=0}^{\frac{N_1}{n_0n_1}-1}\sum_{m=(p-1)l_1}^{pl_1-1}\Lmb^{-(c_{1,p+r}((p+r)n_0n_1+m)+1)}T^{(p+r)n_0n_1 + m +1}.\] This is equal to the result in (1). The second part of the proposition follows from similar arguments. To prove (3), we can again use Lemma~\ref{LemmaVerSum} (note that $N_g = \frac{n_g\lbeta_g}{e_g} = n_g\lbeta_g$) for 
\begin{align*}
	&\sum_{\kappa \geq 0} \sum_{I_{g,(1+\kappa)n_2\cdots n_g}^{(1)}}(\Lmb-1)\Lmb^{-(c_{g,(1+\kappa)n_2\cdots n_g}(m)+1)}T^{m+1} \\
	& = \sum_{\kappa \geq 0} \sum_{m\geq 0} (\Lmb-1)\Lmb^{-(c_{g,(1 + \kappa)n_2\cdots n_g}((1 + \kappa)n_2\cdots n_g \frac{n_g\lbeta_g}{e_1} + m) + 1)}T^{(1 + \kappa)n_2\cdots n_g \frac{n_g\lbeta_g}{e_1} + m + 1}
\end{align*}
together with 
\begin{align*}
	\sum_{m\geq 0} \Lmb^{-(c_{g,n_2\cdots n_g}(n_2\ldots n_g \frac{n_g\lbeta_g}{e_1} + m) + 1)}T^{n_2\ldots n_g \frac{n_g\lbeta_g}{e_1} + m+1}  &= \sum_{m\geq 0} \Lmb^{-(\nu_g + g  + gm + 1)}T^{n_g\lbeta_g + m+1} \\
	&=  \frac{\Lmb^{-(\nu_g + g + 1)}T^{n_g\lbeta_g + 1}}{1-\Lmb^{-g}T}.
\end{align*}
\end{proof}

It remains to sum the first two expressions of Proposition~\ref{PropVerSum} horizontally over all $p \geq 1$. We again begin with a lemma that follows from simple computations. 
	
\begin{lem} \label{LemmaHorSum}
\begin{enumerate} [wide, labelindent=0pt]
	\item For all $k \geq 1, r\geq 0$ and $0 \leq m < (r+1)l_1 = (r+1)\big(\frac{n_2\lbeta_2}{e_1} - n_0n_1\big)$, we have 
	\begin{align*}
		\nu_2  =~& c_{1,k+ \frac{e_2N_2}{\lbeta_2} + r}\Big(\big(k + \frac{e_2N_2}{\lbeta_2} + r\big)n_0n_1 + \big(k + \frac{e_2N_2}{\lbeta_2} - 1\big)l_1 + m\Big) \\
		& - c_{1,k + r}((k+ r)n_0n_1 + (k-1)l_1 +m).
	\end{align*}
	\item  For $i = 2, \ldots, g-1$ and $k \geq 1$, we have for all $r \geq 0$ and $m \in \n$ in the interval $\big[0,(r+1)l_i[ = [0,(r+1)n_2\cdots n_i\big(\frac{n_{i+1}\lbeta_{i+1}}{e_1}-\frac{n_i\lbeta_i}{e_1}\big)\big[$ that 
	\begin{align*}
		\nu_{i+1} =~& c_{i, (k + \frac{e_{i+1}N_{i+1}}{\lbeta_{i+1}} + r)n_2\cdots n_i}\Big(\big(k + \frac{e_{i+1}N_{i+1}}{\lbeta_{i+1}}  + r\big)n_2\cdots n_i\frac{n_i\lbeta_i}{e_1} + \big(k + \frac{e_{i+1}N_{i+1}}{\lbeta_{i+1}} - 1\big)l_i + m\Big) \\ 
		& - c_{i, (k + r)n_2\cdots n_i}\Big((k + r)n_2\cdots n_i\frac{n_i\lbeta_i}{e_1} + (k - 1)l_i + m\Big).
	\end{align*}
\end{enumerate}
\end{lem}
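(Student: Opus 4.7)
The plan is to reduce both parts of the lemma to a single direct computation with the closed-form expression for $c_{i,k}(m)$. Part~(1) is essentially the $i=1$ case of Part~(2) (with the convention that the empty product $n_2\cdots n_1$ equals $1$), so the strategy I describe for Part~(2) covers both, and I will point out at the end where Part~(1) differs.

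The first step is to rewrite $c_{i,k}(m)$ in a compact form separating its $k$-linear, $m$-linear, and integer-part contributions. Using $n_0 = \lbeta_1/e_1$ and $n_1 = \lbeta_0/e_1$, formula~\eqref{cik(m)} simplifies to
\[
c_{i,k}(m) = \frac{k}{e_1}\bigg(\sum_{l=0}^i \lbeta_l - \sum_{l=1}^i n_l \lbeta_l\bigg) + im + \sum_{l=i+1}^g \bigg[\frac{m}{n_l}\bigg] + g.
\]
Setting $\Delta k := e_{i+1}N_{i+1}/\lbeta_{i+1}$, the two first slots of $c_i$ appearing in Part~(2) are $K_1 := (k+r)n_2\cdots n_i$ and $K_2 := (k+\Delta k + r)n_2\cdots n_i$, while the two second slots are $M_1 := K_1\tfrac{n_i\lbeta_i}{e_1} + (k-1)l_i + m$ and $M_2 := K_2\tfrac{n_i\lbeta_i}{e_1} + (k+\Delta k - 1)l_i + m$.

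The second step is a short bookkeeping calculation of $K_2 - K_1$ and $M_2 - M_1$. Using $n_2\cdots n_i/e_1 = 1/e_i$ (since $e_1 = n_2\cdots n_g$ and $e_i = n_{i+1}\cdots n_g$) together with $e_i = n_{i+1}e_{i+1}$, one finds
\[
M_2 - M_1 = \Delta k \cdot \frac{n_i\lbeta_i}{e_i} + \Delta k \cdot l_i = \Delta k \cdot \frac{n_{i+1}\lbeta_{i+1}}{e_i} = N_{i+1},
\]
and $(K_2 - K_1)/e_1 = \Delta k \cdot n_2\cdots n_i / e_1 = N_{i+1}/(n_{i+1}\lbeta_{i+1})$.

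The third step is to evaluate $c_{i,K_2}(M_2) - c_{i,K_1}(M_1)$ term by term using the compact formula from step one. The $k$-linear part contributes $\tfrac{N_{i+1}}{n_{i+1}\lbeta_{i+1}}\bigl(\sum_{l=0}^i \lbeta_l - \sum_{l=1}^i n_l\lbeta_l\bigr)$ and the $m$-linear part contributes $iN_{i+1}$. For the integer-part terms the key observation is that $n_l \mid N_{i+1}$ for every $l = i+1, \ldots, g$, by the definition of $N_{i+1}$ as a least common multiple; therefore $[(m + N_{i+1})/n_l] - [m/n_l] = N_{i+1}/n_l$ identically in $m$, yielding a total contribution of $\sum_{l=i+1}^g N_{i+1}/n_l$. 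Summing the three pieces and rewriting $\lbeta_{i+1}/(n_{i+1}\lbeta_{i+1}) = 1/n_{i+1}$ to absorb the extra $\lbeta_{i+1}$-term into the integer-part sum reproduces exactly $\nu_{i+1}$ as given in its definition.

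The only real subtlety, which I do not expect to be hard, is verifying that the range assumption $0 \leq m < (r+1)l_i$ forces $M_1$ and $M_2$ to lie in the intervals $\big[\tfrac{K_j n_i\lbeta_i}{e_1}, \tfrac{K_j n_{i+1}\lbeta_{i+1}}{e_1}\big)$ for $j=1,2$ respectively, so that formula~\eqref{cik(m)} applies with the indicated $i$. This follows from the chain $0 \leq (k-1)l_i + m < (k+r)l_i$ (and its analogue $0 \leq (k+\Delta k -1)l_i + m < (k+\Delta k + r)l_i$), combined with the identity $n_2\cdots n_i \cdot l_i = \tfrac{n_{i+1}\lbeta_{i+1}}{e_i} - \tfrac{n_i\lbeta_i}{e_i}$ used above. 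Part~(1) is obtained by specializing the same computation to $i=1$, $\Delta k = e_2N_2/\lbeta_2$, $l_1 = n_2\lbeta_2/e_1 - n_0n_1$, and $M_2 - M_1 = N_2$; one verifies in the same way that $n_l \mid N_2$ for all $l = 2,\ldots,g$, and everything else is identical.
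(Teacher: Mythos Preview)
Your proposal is correct and follows exactly the route the paper intends: the paper simply states that the lemma ``follows from simple computations'' without supplying them, and your compact rewriting of $c_{i,k}(m)$ together with the verification that $M_2-M_1=N_{i+1}$ and $n_l\mid N_{i+1}$ for $l\geq i+1$ is precisely the computation one has to do. Your observation about the range of $M_1,M_2$ is a welcome piece of hygiene, though strictly speaking the identity holds as a formal equality of the expression~\eqref{cik(m)} regardless of range.
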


Using similar arguments as in the proof of Proposition~\ref{PropVerSum} with this lemma, the horizontal summation leads to the next result. With the visualization of Figure~\ref{IntervalsPicture}, we can rephrase the first line as follows. In order to calculate the whole contribution of the first intervals $[kn_0n_1,\frac{kn_2\lbeta_2}{e_1}[\cap \n$ for $k \geq 1$, we only need to consider the block of intervals $I_{1,k}^{(p)}$ for $p = 1, \ldots, \frac{e_2N_2}{\lbeta_2}$ and the first $\frac{N_1}{n_0n_1}$ suitable $k$ (i.e., $k = p, \ldots, p+\frac{N_1}{n_0n_1}$). The second line can be interpreted in the same way, and the last line is the part of the infinite branches. 

\begin{prop} \label{PropHorSum}
The contribution to $J_Y(T)$ of the side branches for $k\geq 1$ is 
\begin{align*}
	&\frac{\Lmb-1}{(1-\Lmb^{-\nu_1}T^{N_1})(1-\Lmb^{-\nu_2}T^{N_2})}\sum_{r=0}^{\frac{N_1}{n_0n_1}-1}\sum_{r' = 1}^{\frac{e_2N_2}{\lbeta_2}} \sum_{I_{1,r'+r}^{(r')}}\Lmb^{-(c_{1,r'+r}(m)+1)}T^{m+1} \\
	& + \sum_{i=2}^{g-1}\frac{\Lmb - 1}{(1-\Lmb^{-\nu_i}T^{N_i})(1-\Lmb^{-\nu_{i+1}}T^{N_{i+1}})}\sum_{r=0}^{\frac{e_iN_i}{n_i\lbeta_i}-1} \sum_{r'=1}^{\frac{e_{i+1}N_{i+1}}{\lbeta_{i+1}}} \sum_{I_{i,(r'+r)n_2\cdots n_i}^{(r')}} \Lmb^{-(c_{i,(r'+r)n_2\cdots n_i}(m) + 1)}T^{m+1} \\
	& + \frac{(\Lmb-1)\Lmb^{-(\nu_g + g + 1)}T^{N_g + 1}}{(1-\Lmb^{-g}T)(1-\Lmb^{-\nu_g}T^{N_g})}.
\end{align*}
\end{prop}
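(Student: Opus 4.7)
The plan is to carry out the horizontal summation over $p \geq 1$ applied to the vertical sums produced by Proposition~\ref{PropVerSum}, using Lemma~\ref{LemmaHorSum} together with the same ``linear on congruence classes'' geometric-series trick that already appeared in Propositions~\ref{PropMainBranch} and~\ref{PropVerSum}. The contribution of the infinite branches is then appended unchanged from Proposition~\ref{PropVerSum}(3).

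Fix $i \in \{1,\ldots,g-1\}$. The vertical sum given by Proposition~\ref{PropVerSum}(1) or~(2) has the form $\frac{\Lmb-1}{1-\Lmb^{-\nu_i}T^{N_i}}\,S_i(p)$, where
\[S_i(p) := \sum_{r=0}^{K_i-1}\sum_{I_{i,(p+r)n_2\cdots n_i}^{(p)}}\Lmb^{-(c_{i,(p+r)n_2\cdots n_i}(m)+1)}T^{m+1},\]
with $K_1=\frac{N_1}{n_0n_1}$, $K_i=\frac{e_iN_i}{n_i\lbeta_i}$ for $i\geq 2$, and the convention that $n_2\cdots n_i=1$ when $i=1$. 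Lemma~\ref{LemmaHorSum}(i) states exactly that if one shifts $p \mapsto p+K_{i+1}'$ with $K_{i+1}':=\frac{e_{i+1}N_{i+1}}{\lbeta_{i+1}}$, and correspondingly shifts both the subscript of $c_{i,\cdot}$ and the value of $m$ inside the interval $I_{i,\cdot}^{(p)}$, then every summand has its $\Lmb$-exponent decreased by precisely $\nu_{i+1}$.

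Next I verify that the same shift raises the $T$-exponent by exactly $N_{i+1}$. Recalling $e_1=n_2\cdots n_g$ and $e_i=n_{i+1}e_{i+1}$, one computes
\[K_{i+1}'\Big(n_2\cdots n_i\,\tfrac{n_i\lbeta_i}{e_1}+l_i\Big) = \frac{e_{i+1}N_{i+1}}{\lbeta_{i+1}}\cdot\frac{n_{i+1}\lbeta_{i+1}}{e_i} = N_{i+1},\]
so that $S_i(p+K_{i+1}')=\Lmb^{-\nu_{i+1}}T^{N_{i+1}}S_i(p)$. Splitting $\sum_{p\geq 1}S_i(p)$ into residue classes modulo $K_{i+1}'$ then turns each class into a geometric series with common ratio $\Lmb^{-\nu_{i+1}}T^{N_{i+1}}$, yielding
\[\sum_{p\geq 1}S_i(p) = \frac{1}{1-\Lmb^{-\nu_{i+1}}T^{N_{i+1}}}\sum_{r'=1}^{K_{i+1}'}S_i(r').\]
Plugging this back and collecting the cases $i=1,\ldots,g-1$ produces the first two lines of the claimed formula; the third line is copied verbatim from Proposition~\ref{PropVerSum}(3).

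The main obstacle is the arithmetic bookkeeping in the verification above: one must check that the shift prescribed by Lemma~\ref{LemmaHorSum} matches the definition of $N_{i+1}$ on the nose, and that the indexing ranges $r\in\{0,\ldots,K_i-1\}$ coming from the vertical summation and $r'\in\{1,\ldots,K_{i+1}'\}$ coming from the horizontal summation compose into the precise double sum appearing in the statement. No new ideas beyond those already in Propositions~\ref{PropMainBranch} and~\ref{PropVerSum} are needed; the argument is an iterated application of the same periodicity/geometric-series technique.
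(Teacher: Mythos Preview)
Your proposal is correct and follows exactly the approach the paper intends: the paper's own proof is a one-line remark that the result follows ``using similar arguments as in the proof of Proposition~\ref{PropVerSum} with this lemma,'' and you have spelled out precisely that argument, including the verification that the $T$-exponent shifts by $N_{i+1}$ under $p\mapsto p+\tfrac{e_{i+1}N_{i+1}}{\lbeta_{i+1}}$. Your write-up is in fact more detailed than what the paper provides.
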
	
	
Combining all propositions of this section with the relation \[Z^{mot}_Y(T) = 1- \frac{1-T}{T}J_Y(T),\] we are now ready to give an explicit expression for the motivic zeta function of $Y \subset \co^{g+1}$. 

\begin{theo}\label{TheoremMotFunction}
Consider a space monomial curve $Y \subset \co^{g+1}$ defined by the equations~\eqref{EquationsY}. Let $N_i$ and $\nu_i$ for $i=1, \ldots, g$ be the positive integers defined as \[N_i := \lcm\Big(\frac{\lbeta_i}{e_i}, n_i,\ldots, n_g\Big),\] and \[\nu_i := N_i\left(\frac{1}{n_i\lbeta_i} \bigg(\sum_{l=0}^i\lbeta_l - \sum_{l=1}^{i-1}n_l\lbeta_l\bigg) + (i-1)+ \sum_{l=i+1}^g \frac{1}{n_l}\right).\] The motivic Igusa zeta function associated with $Y \subset \co^{g+1}$ is given by 
\begin{align*}
	Z^{mot}_Y(T) =~ & 1-(1-T)\Bigg(\frac{(\Lmb-1)\Lmb^{-(g+1)}}{1-\Lmb^{-g}T} + \frac{\Lmb^{-(g+1)}}{1- \Lmb^{-\nu_1}T^{N_1}}\sum_{r=0}^{N_1-1}\Lmb^{-\sum\limits_{i=0}^g [\frac{r}{n_i}]}T^r \\
	&+ \sum_{i=1}^{g-1}\frac{(\Lmb - 1)Z_i(T)}{(1-\Lmb^{-\nu_i}T^{N_i})(1-\Lmb^{-\nu_{i+1}}T^{N_{i+1}})} + \frac{(\Lmb-1)\Lmb^{-(\nu_g + g + 1)}T^{N_g}}{(1-\Lmb^{-g}T)(1-\Lmb^{-\nu_g}T^{N_g})}\Bigg).
\end{align*}
Here, $Z_i(T)$ for $i = 1, \ldots, g-1$ are polynomials with coefficients in $\z[\Lmb, \Lmb^{-1}]$. More precisely,
\begin{alignat*}{2}
	&Z_1(T) &&:= \sum_{r=0}^{\frac{N_1}{n_0n_1}-1}\sum_{r' = 1}^{\frac{e_2N_2}{\lbeta_2}} \sum_{I_{1,r'+r}^{(r')}}\Lmb^{-(c_{1,r'+r}(m)+1)}T^{m}, \\
	&Z_i(T) && := \sum_{r=0}^{\frac{e_iN_i}{n_i\lbeta_i}-1} \sum_{r'=1}^{\frac{e_{i+1}N_{i+1}}{\lbeta_{i+1}}} \sum_{I_{i,(r'+r)n_2\cdots n_i}^{(r')}} \Lmb^{-(c_{i,(r'+r)n_2\cdots n_i}(m) + 1)}T^{m}, \qquad i=2, \ldots, g-1,
\end{alignat*} 
where, for $i=1, \ldots, g-1$ and $k,m,p \in \n$, \[I_{i,k}^{(p)} := \bigg[\frac{kn_i\lbeta_i}{e_1} + (p-1)\Big(\frac{n_{i+1}\lbeta_{i+1}}{e_i} - \frac{n_i\lbeta_i}{e_i} \Big), \frac{kn_i\lbeta_i}{e_1} + p\Big(\frac{n_{i+1}\lbeta_{i+1}}{e_i} - \frac{n_i\lbeta_i}{e_i} \Big) \bigg[\cap \n,\] and \[c_{i,k}(m):= k(n_0+n_1)+\sum_{l=2}^i\frac{k\bar{\beta}_l}{e_1}+ \sum_{l=1}^i\Big(m-\frac{kn_l\bar{\beta}_l}{e_1}+1\Big)+\sum_{l=i+1}^g\Big(\Big[\frac{m}{n_l}\Big]+1\Big).\]
\end{theo}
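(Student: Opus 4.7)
The plan is to assemble $J_Y(T)$ from the three contributions identified in Propositions~\ref{PropAboveReg},~\ref{PropMainBranch} and~\ref{PropHorSum}, and then convert the resulting Poincar\'e series into the motivic Igusa zeta function via the identity $Z^{mot}_Y(T) = 1 - \tfrac{1-T}{T}J_Y(T)$ recalled at the end of Section~\ref{JetMotivic}.

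First I would start from the additive relation
\[ [Y_m] = [\pi_m^{-1}(Y \setminus \{0\})] + [\pi_m^{-1}(0)_{red}] \]
in $K_0(\text{Var}_{\co})$, which is valid because $[Y_m] = [(Y_m)_{red}]$, and then use the refined stratification
\[ \pi_m^{-1}(0)_{red} = \Big(\bigsqcup_{k=1}^l D_{m,k}\Big) \sqcup \mathcal{B}_m \]
for $l \in \n$ determined by $l n_0 n_1 \leq m < (l+1)n_0 n_1$, which was set up at the beginning of Section~\ref{Motivic}. Plugging this into the definition of $J_Y(T)$ gives a decomposition of $J_Y(T)$ into a regular part, a main-branch part, and a side-branch part. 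Proposition~\ref{PropAboveReg} handles the regular part; Proposition~\ref{PropMainBranch} handles the main branch (whose strata $\mathcal{B}_m$ are affine spaces with codimensions linear on congruence classes modulo $N_1$); Proposition~\ref{PropHorSum} handles the side branches indexed by $k \geq 1$. Collecting these three results produces an explicit rational expression for $J_Y(T)$ whose four summands already match those in the statement of the theorem up to a common leading factor of $T$ in each term.

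Next I would apply $Z^{mot}_Y(T) = 1 - \tfrac{1-T}{T}J_Y(T)$. Multiplication by $-\tfrac{1-T}{T}$ cancels the leading $T$ in each summand of $J_Y(T)$ and prepends the global prefactor $-(1-T)$; in particular it turns each $T^{m+1}$ inside the triple sums of Proposition~\ref{PropHorSum} into $T^m$, which is exactly the convention used for the polynomials $Z_i(T)$ stated in the theorem. Adding the resulting expression to $1$ yields the closed formula for $Z^{mot}_Y(T)$.

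The genuine technical content has already been carried out in Propositions~\ref{PropDmk} and~\ref{PropVerSum}--\ref{PropHorSum}, so I do not expect any essential obstruction at this final step. The only delicate point of the assembly is the transition at the vertices $m = \ell n_0 n_1$: there $B_m$ is not an affine space but splits as $B_m = \mathcal{B}_m \sqcup D_{\ell n_0 n_1, \ell}$ via the loci $\{x_0^{(\ell n_1)} = 0\}$ and $\{x_0^{(\ell n_1)} \neq 0\}$, and one must verify that assigning the open piece to the side branch $k = \ell$ rather than to the main branch yields a stratification by locally closed subvarieties with no double counting. This is precisely what was arranged when $\mathcal{B}_m$ was defined. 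The case $g = 1$ (in which the sum $\sum_{i=1}^{g-1}$ is empty and the side-branch term involving $Z_i$ disappears) follows in the same way from the analogous stratification~\eqref{StratificationForg=1} recalled at the end of Section~\ref{Jets}.
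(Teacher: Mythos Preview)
Your proposal is correct and follows essentially the same approach as the paper: assemble $J_Y(T)$ from the three contributions of Propositions~\ref{PropAboveReg}, \ref{PropMainBranch} and \ref{PropHorSum}, then apply the identity $Z^{mot}_Y(T) = 1 - \tfrac{1-T}{T}J_Y(T)$. The paper also treats the vertex reorganization $B_m = \mathcal{B}_m \sqcup D_{\ell n_0 n_1,\ell}$ and the $g=1$ case exactly as you outline.
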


We see that the motivic zeta function is indeed a rational function in $T$ and we can take a look at its poles. Since $\mathcal{M}_{\co}$ is not an integral domain, see for instance the appendix of~\cite{C}, one has to be careful with defining a pole. However, for example with the definition given in~\cite{RV}, one can see that a complete list of possible poles for $Z^{mot}_Y(T)$ is \[\Lmb^g, \qquad \Lmb^{\frac{\nu_i}{N_i}}, \quad i=1, \ldots, g,\] which could intuitively be expected from the above expression. In the next section, we will prove that all these candidates are actual poles, as we already notice in the following examples. \\

\begin{ex} \label{ExMotivic}
\begin{enumerate}[wide, labelindent=0pt]
	\item Consider the irreducible plane curve given by $(x_1^2-x_0^3)^2-x_0^5x_1 = 0$. Its semigroup has $(4,6,13)$ as unique minimal set of generators, and the corresponding space curve $Y_1 \subseteq \co ^3$ in three variables $(g=2$) is defined by 
	\[\left\{ 
	\begin{array}{r c l l}
		x_1^2 & - & x_0^3  &  = 0 \\
		x_2^2  &- & x_0^5x_1 &= 0.  \\
 	\end{array}
 	\right.\]
	Using Theorem~\ref{TheoremMotFunction}, one can compute that \[Z^{mot}_{Y_1}(T) = \frac{(\Lmb-1)P_1(T)}{\Lmb^{47}(1-\Lmb^{-2}T)(1-\Lmb^{-8}T^6)(1-\Lmb^{-37}T^{26})}\] where $P_1(T)$ is the polynomial
 	\begin{align*}
		& (\Lmb+1 ) {T}^{31}-{\Lmb}^{3}{T}^{30}+{\Lmb}^{3}{T}^{29}- ({\Lmb}^{6}+{\Lmb}^{5} ) {T}^{28}+ ( {\Lmb}^{6}+{\Lmb}^{5} ) {T}^{27}-2\,{\Lmb}^{8}{T}^{26} + ( -{\Lmb}^{9}+{\Lmb}^{8} ) {T}^{25} \\
		& +( {\Lmb}^{12}-{\Lmb}^{11} ) {T}^{24}+ ( -{\Lmb}^{12}+{\Lmb}^{11}) {T}^{23}+ ( {\Lmb}^{15}-{\Lmb}^{14} ) {T}^{22}+ ( - {\Lmb}^{15}+{\Lmb}^{14} ) {T}^{21}+{\Lmb}^{18}{T}^{20}\\
		& -{\Lmb}^{18}{T}^{19} -{\Lmb}^{25}{T}^{14}+{\Lmb}^{25}{T}^{13}+ ( {\Lmb}^{29}-{\Lmb}^{28} ) {T}^{12} + ( -{\Lmb}^{29}+{\Lmb}^{28} ) {T}^{11}+ ( {\Lmb}^{32}-{\Lmb}^{31} ) {T}^{10}\\
		& + ( -{\Lmb}^{32}+{\Lmb}^{31} ) {T}^{9}+( {\Lmb}^{35}-{\Lmb}^{34} ) {T}^{8} + ( -{\Lmb}^{35}+{\Lmb}^{34}) {T}^{7}- ( {\Lmb}^{38}+{\Lmb}^{37} ) {T}^{5}+{\Lmb}^{40}{T}^{4}\\
		& -{\Lmb}^{40}{T}^{3}+ ( {\Lmb}^{43}+{\Lmb}^{42} ) {T}^{2} -( {\Lmb}^{43}+{\Lmb}^{42} ) T+{\Lmb}^{46}+{\Lmb}^{45}.
	\end{align*}
	This expression has three poles, $\Lmb^2,\Lmb^{\frac{8}{6}}$ and $\Lmb^{\frac{37}{26}}$, corresponding to $g = 2, \frac{\nu_1}{N_1} = \frac{8}{6}$ and $\frac{\nu_2}{N_2} = \frac{37}{26}$, respectively. These are precisely the set of candidate poles from Theorem~\ref{TheoremMotFunction}.
	\item The polynomial $((x_1^2-x_0^3)^2 - x_0^5x_1)^2 - x_0^{10}(x_1^2-x_0^3)$ defines an irreducible plane curve whose semigroup is minimally generated by $(8,12,26,53)$, and it induces the curve $Y_2 \subseteq \co^4$ given by
	\[\left\{ 
	\begin{array}{r c l l}
		x_1^2 & - & x_0^3  &  = 0 \\
		x_2^2  &- & x_0^5x_1 &= 0 \\
		x_3^2 & - &x_0^{10}x_2 &=0.
 	\end{array}\right.\]  
	For this curve, Theorem~\ref{TheoremMotFunction} gives \[Z^{mot}_{Y_2}(T) = \frac{(\Lmb-1)P_2(T)}{\Lmb^{299}(1-\Lmb^{-3}T)(1-\Lmb^{-11}T^6)(1-\Lmb^{-50}T^{26})(1-\Lmb^{-235}T^{106})},\] for a concrete polynomial $P_2(T)$ of degree $137$ with coefficients in $\z[\Lmb]$, which occupies more than half a page. Again, all candidate poles turn out to be actual poles.  
\end{enumerate}
\end{ex}

Both results in Example~\ref{ExMotivic} were also obtained using other methods in~\cite[Chapter 7]{P}; there, the local $p$-adic zeta function of $Y_i$ was calculated in terms of a principalization of its defining ideal. From the data of the same principalization, one can deduce an expression for the global $p$-adic zeta function of $Y_i$, to which the above expression for the global motivic zeta function specializes.

\begin{rem}\label{RemPrinc}
For a general monomial curve $Y \subset \co^{g+1}$, we do not see how to construct an explicit principalization of its defining ideal in order to deduce the motivic Igusa zeta function from it. In addition, we would like to point out that the resolution constructed in~\cite[Section 5]{MVV} is also not sufficient to compute the motivic zeta function of $Y$. More precisely, in~\cite{MVV}, the problem of studying the monodromy eigenvalues of $Y$ is handled by considering $Y$ as a Cartier divisor on a generic embedding surface $S \subset \co^{g+1}$ and constructing an embedded $\mathbb Q$-resolution of $Y \subset S$. This $\mathbb Q$-resolution could be used to compute a part of the motivic zeta function of $Y$, but not its whole zeta function.
\end{rem}

The approach in this section also provides a way to compute the local motivic zeta function associated with a space monomial curve $Y \subset \co^{g+1}$. As in Section~\ref{JetMotivic}, one can check that the relations $[\mathcal{X}_{0,0}] = \emptyset$ and $[\mathcal{X}_{m,0}] = \Lmb^{g+1}[\pi^{-1}_{m-1}(0)_{red}] - [\pi^{-1}_m(0)_{red}]$ for $m \geq 1$ imply that \[\Lmb^{-(g+1)}\sum_{m\geq 0}[\mathcal{X}_{m,0}](\Lmb^{-(g+1)}T)^m= \Lmb^{-(g+1)}-\frac{1-T}{T}\sum_{m\geq0}[\pi^{-1}_m(0)_{red}](\Lmb^{-(g+1)}T)^{m+1}.\] Therefore, the local version is equal to the above expression with the first $1$ replaced by $\Lmb^{-(g+1)}$ and without the term \[-\frac{(1-T)(\Lmb-1)\Lmb^{-(g+1)}}{1-\Lmb^{-g}T},\] which comes from the side branch of $0$ consisting of $[Y_m \setminus \pi^{-1}_m(0)_{red}]$. \\

For $g=1$, we can repeat most steps of these computations: we can change the stratification~\eqref{StratificationForg=1} of $\pi^{-1}_m(0)_{red}$ in exactly the same way such that $\mathcal{B}_m \simeq \co^{(g+1)(m+1) - c(m)}$ for every $m \in \n$; we can split the calculations in a side branch of $0$ with $\pi^{-1}_m(Y\setminus \{0\})$ for all $m \geq 0$, a main branch containing $\mathcal{B}_m$ for all $m \geq 0$, and side branches at $kn_0n_1$ for all $k \geq 1$; and we get the same results for the side branch of $0$ and the main branch as in Proposition~\ref{PropAboveReg} and Proposition~\ref{PropMainBranch}, respectively. The only difference is that each $k \geq 1$ has an infinite branch consisting of $D_{m,k}$ with codimension $c_{1,k}(m)$ for all $m \geq kn_0n_1$. However, this can be treated similarly as the infinite branches for $g \geq 2$, and we obtain the same expression as in Proposition~\ref{PropVerSum}, part (3). In other words, the motivic zeta function of the plane curve $Y = V(x_1^{n_1}-x_0^{n_0}) \subset \co^2$ is given by the expression in Theorem~\ref{TheoremMotFunction} with $g=1$. 

\section{Poles of the motivic zeta function of a space monomial curve}\label{PolesMotivic}

The explicit expression for the motivic Igusa zeta function associated with a space monomial curve $Y \subset \co^{g+1}$ in Theorem~\ref{TheoremMotFunction} provides the following $g+1$ candidate poles for all $g \geq 1$: \[\Lmb^g, \qquad \Lmb^{\frac{\nu_i}{N_i}}, \quad i=1, \ldots, g.\] We will now show that all these possible poles are actual poles. \\

Instead of proving this for the motivic zeta function directly, we will work with the \emph{topological Igusa zeta function} associated with $Y$. This zeta function was first introduced by Denef and Loeser~\cite{DL1} for one polynomial $f$ in terms of an embedded resolution of $f$. Such  a resolution can also be used to express the motivic zeta function of $f$ and to show that this function specializes to the topological one, see for example~\cite{DL2}. In particular, a pole of the topological zeta function induces a pole of the motivic zeta function. For an ideal, one can obtain a similar formula in terms of a principalization of the ideal, where the topological version is again a specialization of the motivic one. The generalization to ideals by using a principalization is mentioned in~\cite{VZ}. \\

Roughly speaking, we obtain the topological zeta function $Z^{top}_Y(s)$ for $s \in \co$ by substituting $T = \Lmb^{-s}$ in $Z^{mot}_Y(T)$ and taking the limit $\Lmb \rightarrow 1$. Formally, one should first specialize to the \emph{Hodge zeta function}, using Hodge polynomials, and then to the topological zeta function. In this way, we get the following expression for the topological zeta function associated with a space monomial curve $Y \subset \co^{g+1}$: 
\begin{align*}
	Z^{top}_Y(s) = ~& \frac{\nu_1}{\nu_1 + sN_1} - \sum_{i=1}^{g-1}\frac{e_{i+1}N_iN_{i+1}}{n_i\lbeta_i\lbeta_{i+1}}(n_{i+1}\lbeta_{i+1} - n_i\lbeta_i)\frac{s}{(\nu_i + sN_i)(\nu_{i+1} +s N_{i+1})} \\
	&- \frac{s}{(g + s)(\nu_g + sN_g)}.
\end{align*}
The candidate poles of this rational function are clearly $-g$ and $-\frac{\nu_i}{N_i}$ for $i=1, \ldots, g,$ which correspond to the possible poles for the motivic zeta function. Therefore, we will prove the stronger result that each of these candidates is an actual pole of $Z^{top}_Y(s)$. 

\begin{ex} 
We consider the two curves $Y_1$ and $Y_2$ from Example~\ref{ExMotivic}. Applying the above specialization yields \[Z^{top}_{Y_1}(s) = \frac{4(47s^2 + 169s + 148)}{(2 + s)(8 + 6s)(37 + 26s)},\] and \[Z^{top}_{Y_2}(s) = \frac{2(14176s^3+103282s^2+246789s+193875)}{(3 + s)(11 + 6s)(50 + 26s)(235 + 106s)},\] from which we clearly see that all candidate poles are actual poles. Again, both these topological zeta functions were also found using a principalization in~\cite[Chapter 7]{P}.
\end{ex}

We start with remarking the following inequalities between the candidate poles.

\begin{lem} \label{LemmaRelPoles}
The candidate poles can be ordered as \[-g < -\frac{\nu_g}{N_g} < -\frac{\nu_{g-1}}{N_{g-1}} < \cdots < -\frac{\nu_1}{N_1}.\] In particular, this implies that every candidate pole is possibly an actual pole of order $1$.
\end{lem}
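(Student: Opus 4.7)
The plan is to set $\rho_i := \nu_i/N_i$ and verify the chain $\rho_1 < \rho_2 < \cdots < \rho_g < g$ by directly analyzing consecutive differences. I would first introduce the auxiliary quantity
\[S_i := \sum_{l=0}^i\lbeta_l - \sum_{l=1}^{i-1}n_l\lbeta_l = \lbeta_0 + \lbeta_i - \sum_{l=1}^{i-1}(n_l-1)\lbeta_l,\]
so that $\rho_i = (i-1) + \sum_{l=i+1}^g\frac{1}{n_l} + \frac{S_i}{n_i\lbeta_i}$, and note that $S_{i+1} - S_i = \lbeta_{i+1} - n_i\lbeta_i > 0$ by the inequality $n_i\lbeta_i < \lbeta_{i+1}$ recalled in Section~\ref{SpaceCurve}. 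Substituting this recursion into $\rho_{i+1} - \rho_i$ and simplifying should produce the compact identity
\[\rho_{i+1} - \rho_i = \frac{(n_{i+1}\lbeta_{i+1} - n_i\lbeta_i)(n_i\lbeta_i - S_i)}{n_i n_{i+1}\lbeta_i\lbeta_{i+1}}.\]

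Since $\lbeta_{i+1} > n_i\lbeta_i$ and $n_{i+1} \geq 2$, the first factor in the numerator is positive, so $\rho_{i+1} - \rho_i$ has the same sign as $n_i\lbeta_i - S_i$, and the whole chain $\rho_1 < \cdots < \rho_g$ reduces to the single claim $S_i < n_i\lbeta_i$ for every $i = 1, \ldots, g$, equivalently
\[\lbeta_0 < (n_i-1)\lbeta_i + \sum_{l=1}^{i-1}(n_l-1)\lbeta_l.\]
For $i \geq 2$ this is immediate from $\lbeta_0 < \lbeta_i \leq (n_i-1)\lbeta_i$. The boundary case $i = 1$ is the expected main obstacle: there the inequality reads $\lbeta_0 < (n_1 - 1)\lbeta_1$, which after using $n_0\lbeta_0 = n_1\lbeta_1$ becomes $n_1 < n_0(n_1-1)$; I would exploit here that $n_0 > n_1 \geq 2$ are coprime, hence $n_0 \geq 3$, making the inequality routine (for $n_1 = 2$ it says $n_0 > 2$, and for $n_1 \geq 3$ it follows from $n_0 \geq 2$).

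Once this chain is established, the outer inequality $\rho_g < g$ follows immediately from $\rho_g = (g-1) + \frac{S_g}{n_g\lbeta_g}$ combined with the $i = g$ case of the key claim. For the final assertion about multiplicities, the denominator of the expression for $Z^{mot}_Y(T)$ in Theorem~\ref{TheoremMotFunction} is a product of the factors $(1 - \Lmb^{-g}T)$ and $(1 - \Lmb^{-\nu_i}T^{N_i})$ for $i = 1, \ldots, g$; since the exponents $g, \rho_1, \ldots, \rho_g$ are pairwise distinct, each candidate value $\Lmb^g, \Lmb^{\rho_i}$ arises from exactly one such factor, giving at most a simple pole. The only genuinely arithmetic input in the whole argument is the coprimality of $(n_0, n_1)$ used in the $i=1$ step; the remainder is algebraic bookkeeping.
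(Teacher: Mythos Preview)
Your proposal is correct and is essentially the same argument as the paper's. Your factored identity $\rho_{i+1}-\rho_i = \dfrac{(n_{i+1}\lbeta_{i+1}-n_i\lbeta_i)(n_i\lbeta_i-S_i)}{n_in_{i+1}\lbeta_i\lbeta_{i+1}}$ is identical to the paper's formula $\left(\dfrac{1}{n_i\lbeta_i}-\dfrac{1}{n_{i+1}\lbeta_{i+1}}\right)\left(-\lbeta_0+\sum_{l=1}^i(n_l-1)\lbeta_l\right)$ once you unwind $n_i\lbeta_i-S_i = -\lbeta_0+\sum_{l=1}^i(n_l-1)\lbeta_l$, and both proofs hinge on the same arithmetic fact that $n_0>n_1\geq 2$ coprime forces $(n_1-1)\lbeta_1>\lbeta_0$. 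The only cosmetic difference is that for $i\geq 2$ you invoke $\lbeta_0<\lbeta_i\leq(n_i-1)\lbeta_i$ directly, whereas the paper observes that the sum $-\lbeta_0+\sum_{l=1}^i(n_l-1)\lbeta_l$ is already positive at $i=1$ and only grows thereafter.
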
 

\begin{proof} 
Let $g \geq 2$ and take $i \in \{1, \ldots, g-1\}$ fixed. The difference between $\frac{\nu_{i+1}}{N_{i+1}}$ and $\frac{\nu_i}{N_i}$ can be rewritten as 
\begin{equation} \label{DifferencePoles}
	\frac{\nu_{i+1}}{N_{i+1}} - \frac{\nu_i}{N_i}  = \bigg(\frac{1}{n_i\lbeta_i} - \frac{1}{n_{i+1}\lbeta_{i+1}}\bigg)\bigg(- \lbeta_0 + \sum_{l=1}^i(n_l-1)\lbeta_l \bigg).
\end{equation}
From the proof of Corollary~\ref{CorIrrComWhole}, we know that $-\lbeta_0 + (n_1-1)\lbeta_1 > 0$. Together with $n_i\lbeta_i < \lbeta_{i+1}$ and $n_l>1$ for all $l=1, \ldots, i$, we indeed see that $\frac{\nu_{i+1}}{N_{i+1}} - \frac{\nu_i}{N_i} > 0.$ Similarly, both the inequality $-g < -\frac{\nu_g}{N_g}$ and the case for $g = 1$ follow from the positive difference \[g - \frac{\nu_g}{N_g} = \frac{1}{n_g\lbeta_g}\bigg(- \lbeta_0 + \sum_{l=1}^g(n_l-1)\lbeta_l \bigg). \qedhere\]
\end{proof}

This relation between the candidate poles immediately yields the \emph{log canonical threshold} of the pair $(\co^{g+1},Y)$. The log canonical threshold is an important invariant in birational geometry, and we refer to~\cite{K} or~\cite{Mu2} for more about it. See also Remark~\ref{RemDimLct}. 

\begin{cor}\label{lct}
The log canonical threshold of the pair $(\co^{g+1},Y)$ is $\frac{\nu_1}{N_1} = \sum\limits_{l=0}^g \frac{1}{n_l}$.
\end{cor}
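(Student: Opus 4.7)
The plan is to reduce the statement to facts that are already (or will shortly be) established and then do a short algebraic simplification. By Remark~\ref{RemDimLct}, the log canonical threshold of the pair $(\co^{g+1}, Y)$ equals the absolute value of the largest pole of the topological Igusa zeta function $Z^{top}_Y(s)$. The set of candidate poles of $Z^{top}_Y(s)$ is $\{-g\} \cup \{-\nu_i/N_i \mid i = 1,\ldots,g\}$, and by Lemma~\ref{LemmaRelPoles} these are linearly ordered as
\[-g < -\frac{\nu_g}{N_g} < -\frac{\nu_{g-1}}{N_{g-1}} < \cdots < -\frac{\nu_1}{N_1},\]
so the largest candidate is $-\nu_1/N_1$. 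Hence, once I know that $-\nu_1/N_1$ is an \emph{actual} pole of $Z^{top}_Y(s)$, I immediately conclude $\mathrm{lct}(Y,\co^{g+1}) = \nu_1/N_1$.

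For the fact that $-\nu_1/N_1$ is an actual pole, I would invoke the forthcoming Theorem~\ref{ThmPoles}, which states exactly that every candidate in the above list is an actual pole (and the same proof works at the level of the topological zeta function, since it is the specialization of the motivic one). Note that this is consistent with the fact that all the candidates are simple (Lemma~\ref{LemmaRelPoles}), so no cancellation issues arise in identifying the largest pole from the explicit rational expression for $Z^{top}_Y(s)$ given just before this corollary.

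It remains to verify the second equality $\nu_1/N_1 = \sum_{l=0}^g 1/n_l$. From the definition
\[\frac{\nu_1}{N_1} = \frac{1}{n_1\lbeta_1}\bigl(\lbeta_0 + \lbeta_1\bigr) + \sum_{l=2}^{g} \frac{1}{n_l} = \frac{1}{n_1} + \frac{\lbeta_0}{n_1\lbeta_1} + \sum_{l=2}^{g} \frac{1}{n_l}.\]
Since $n_0 = \lbeta_1/e_1$ and $n_1 = \lbeta_0/e_1$ (recalled in Section~\ref{SpaceCurve}), we have $n_1\lbeta_1 = \lbeta_0\lbeta_1/e_1$, so $\lbeta_0/(n_1\lbeta_1) = e_1/\lbeta_1 = 1/n_0$. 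Substituting gives $\nu_1/N_1 = \sum_{l=0}^{g} 1/n_l$, which completes the proof.

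The only step which is not essentially bookkeeping is the appeal to Theorem~\ref{ThmPoles} to ensure $-\nu_1/N_1$ is genuinely a pole; everything else is just combining the ordering lemma with the dictionary between $\mathrm{lct}$ and topological poles, plus a one-line manipulation with the elementary identities $n_0 e_1 = \lbeta_1$ and $n_1 e_1 = \lbeta_0$.
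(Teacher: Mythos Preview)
Your proof is correct and follows essentially the same approach as the paper: combine the identification of $-\mathrm{lct}$ with the largest pole of $Z^{top}_Y$ (Remark~\ref{RemDimLct}, citing~\cite{NX}) with the ordering of the candidate poles from Lemma~\ref{LemmaRelPoles}, and verify the identity $\nu_1/N_1=\sum_{l=0}^g 1/n_l$ (which in the paper is in fact the original definition of $\nu_1$ given just before Proposition~\ref{PropMainBranch}). The only minor difference is organizational: the paper states the corollary \emph{before} Theorem~\ref{ThmPoles}, so rather than invoking the full theorem as you do, it implicitly relies on the easy observation (made explicit in the proof of Theorem~\ref{ThmPoles}) that the residue $R_1$ is a sum of two positive terms once Lemma~\ref{LemmaRelPoles} is known, hence $-\nu_1/N_1$ is genuinely a pole.
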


We can now state and prove the main result of this section.

\begin{theo}\label{ThmPoles}
A complete list of the poles of the topological Igusa zeta function associated with a space monomial curve $Y \subset \co^{g+1}$ is given by \[-g, \qquad -\frac{\nu_i}{N_i} = -\left(\frac{1}{n_i\lbeta_i} \bigg(\sum_{l=0}^i\lbeta_l - \sum_{l=1}^{i-1}n_l\lbeta_l\bigg) + (i-1)+ \sum_{l=i+1}^g \frac{1}{n_l}\right), \quad i=1, \ldots, g,\] and all these poles have order $1$. Consequently, the motivic Igusa zeta function associated with $Y$ has poles \[\Lmb^g, \qquad \Lmb^{\frac{\nu_i}{N_i}}, \quad i=1, \ldots, g,\] which are all poles of order $1$.
\end{theo}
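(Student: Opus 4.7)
The plan is to show that each candidate pole is an actual pole of the topological zeta function $Z^{top}_Y(s)$ via a direct residue computation, and then deduce the statement for $Z^{mot}_Y(T)$ by using the fact, noted in the excerpt, that a pole of the topological zeta function induces a pole of the motivic one. Writing $\alpha_i:=\nu_i/N_i$, Lemma~\ref{LemmaRelPoles} gives the strict ordering $\alpha_1<\alpha_2<\cdots<\alpha_g<g$, so the candidates $-g,-\alpha_g,\ldots,-\alpha_1$ are pairwise distinct and each is at worst a simple pole of $Z^{top}_Y$; it therefore suffices to check that every residue is nonzero.

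First I would rewrite the topological zeta function compactly. Setting $B_i:=e_{i+1}(n_{i+1}\lbeta_{i+1}-n_i\lbeta_i)/(n_i\lbeta_i\lbeta_{i+1})$ for $1\le i\le g-1$ and $B_g:=1/N_g$, we obtain
\[
Z^{top}_Y(s)=\frac{\alpha_1}{\alpha_1+s}-\sum_{i=1}^{g-1}B_i\,\frac{s}{(\alpha_i+s)(\alpha_{i+1}+s)}-B_g\,\frac{s}{(g+s)(\alpha_g+s)}.
\]
Only the terms sharing the pole in question contribute to each residue, and the central tool is a refined form of the difference formula~\eqref{DifferencePoles} already derived in the proof of Lemma~\ref{LemmaRelPoles}: putting $D_i:=-\lbeta_0+\sum_{l=1}^i(n_l-1)\lbeta_l$ (strictly positive for $i\ge 1$, by the same argument as in that proof), one has
\[
\alpha_{i+1}-\alpha_i=\frac{n_{i+1}\lbeta_{i+1}-n_i\lbeta_i}{n_i\lbeta_i\cdot n_{i+1}\lbeta_{i+1}}\,D_i,
\]
and, combined with $e_i=n_{i+1}e_{i+1}$, this gives the clean identity $B_i/(\alpha_{i+1}-\alpha_i)=e_i/D_i$. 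An analogous short calculation, using $e_g=1$ and $\gcd(n_g,\lbeta_g)=e_g$ so that $N_g=n_g\lbeta_g$, yields $g-\alpha_g=D_g/(n_g\lbeta_g)$, hence $B_g/(g-\alpha_g)=1/D_g$.

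Substituting these into each residue produces closed-form expressions. At $-g$ only the last term contributes and the residue is $-g/(N_g(g-\alpha_g))$, which is nonzero. At $-\alpha_1$ the first summand contributes $\alpha_1$ and the $i=1$ term of the sum contributes $e_1\alpha_1/D_1$, totalling $\alpha_1(D_1+e_1)/D_1=\alpha_1(n_0-1)(n_1-1)/(n_0n_1-n_0-n_1)$, which is strictly positive (using $D_1=e_1(n_0n_1-n_0-n_1)$). For each remaining pole $-\alpha_i$ with $2\le i\le g$ (where at $i=g$ we use the formula $B_g/(g-\alpha_g)=1/D_g$ with the convention $e_g=1$), the two contributing terms have opposite signs and the residue simplifies to $\alpha_i e_i(D_{i-1}-n_iD_i)/(D_{i-1}D_i)$. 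The identity $D_i=D_{i-1}+(n_i-1)\lbeta_i$ then gives $D_{i-1}-n_iD_i=-(n_i-1)(D_{i-1}+n_i\lbeta_i)$, which is strictly negative since $n_i\ge 2$ and $D_{i-1},\lbeta_i>0$.

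The main obstacle is precisely this sign cancellation at the internal poles $-\alpha_i$ for $2\le i\le g$: two terms of $Z^{top}_Y$ simultaneously have a simple pole there and their residues have opposite signs, so non-vanishing of their sum is not automatic and rests on the algebraic identity $D_{i-1}-n_iD_i=-(n_i-1)(D_{i-1}+n_i\lbeta_i)$. Once this is established, all residues are manifestly nonzero, each candidate is an actual simple pole of $Z^{top}_Y(s)$, and the conclusion for $Z^{mot}_Y(T)$ follows by the specialization principle.
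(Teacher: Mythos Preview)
Your argument is correct and follows the same overall strategy as the paper: both proofs compute the residue of $Z^{top}_Y(s)$ at each candidate pole using the difference formula~\eqref{DifferencePoles} and verify it is nonzero, then transfer the conclusion to $Z^{mot}_Y(T)$ via specialization. The execution differs only in packaging: the paper writes out each residue $R_i$, reduces the desired sign to an explicit inequality, and checks it directly, whereas you introduce the abbreviation $D_i=-\lbeta_0+\sum_{l=1}^i(n_l-1)\lbeta_l$, derive the clean identity $B_i/(\alpha_{i+1}-\alpha_i)=e_i/D_i$, and obtain a closed-form residue $\alpha_i e_i(D_{i-1}-n_iD_i)/(D_{i-1}D_i)$ whose sign is read off from $D_{i-1}-n_iD_i=-(n_i-1)(D_{i-1}+n_i\lbeta_i)$. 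This is somewhat tidier than the paper's inequality chase, but the underlying computation is the same. One small omission: your treatment of $R_1$ tacitly assumes $g\ge 2$ (so that a term with $\alpha_2$ exists); for $g=1$ the second contribution to the residue at $-\alpha_1$ comes from the $B_g$-term instead, but the argument that both summands are positive goes through identically.
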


\begin{proof}
We will show that the residue of each candidate pole is non-zero. For every $g \geq 1$, this is trivial for the residue of the smallest pole, $-g$, given by \[\frac{g}{\nu_g - gN_g}.\] To investigate the remaining residues, we denote the residue corresponding to $-\frac{\nu_i}{N_i}$ by $R_i$ for $i = 1, \ldots, g$, and we list them, up to a factor of $\frac{\nu_i}{N_i^2}$, in the next table.
{\tabulinesep=1.3mm
\begin{longtabu}{| >{\centering}p{.1\textwidth} | p{.75\textwidth} |}  
	\hline
	$g$ & residues \\
	\hline \hline
	$g = 1$ & $R_1= N_1 + \frac{1}{g-\frac{\nu_1}{N_1}}$ \\
	\hline
	\multirow{3}{*}{$g \geq 2$} &$R_1 = N_1 + \frac{e_2N_1}{n_1\lbeta_1\lbeta_2}(n_2\lbeta_2- n_1\lbeta_1)\frac{1}{\frac{\nu_2}{N_2}-\frac{\nu_1}{N_1}}$  \\
	& $R_{i, i = 2, \ldots, g-1} = \frac{e_iN_i}{n_{i-1}\lbeta_{i-1}\lbeta_i}(n_i\lbeta_i - n_{i-1}\lbeta_{i-1})\frac{1}{\frac{\nu_{i-1}}{N_{i-1}}-\frac{\nu_i}{N_i}}$ \\ 
	& \hspace{68pt} + $\frac{e_{i+1}N_i}{n_i\lbeta_i\lbeta_{i+1}}(n_{i+1}\lbeta_{i+1} - n_i\lbeta_i)\frac{1}{\frac{\nu_{i+1}}{N_{i+1}}-\frac{\nu_i}{N_i}}$,  \\ 
	& $R_g = \frac{N_g}{n_{g-1}\lbeta_{g-1}\lbeta_g}(n_g\lbeta_g - n_{g-1}\lbeta_{g-1})\frac{1}{\frac{\nu_{g-1}}{N_{g-1}}-\frac{\nu_g}{N_g}} + \frac{1}{g-\frac{\nu_g}{N_g}}$ \\
	\hline 
\end{longtabu}}
From the relation between the candidate poles in Lemma~\ref{LemmaRelPoles}, it immediately follows for all $g$ that $R_1 > 0$, being the sum of two positive numbers. We claim that all the other residues for $g\geq 2$ are strictly negative, which is less trivial as they consist of a negative and a positive part. We take a look at $R_i$ for $i = 2, \ldots, g-1$ and $g \geq 3$; the last residue $R_g$ for $g \geq 2$ can be treated in a similar way. The inequality $R_i < 0$ is equivalent to \[\frac{e_i}{n_{i-1}\lbeta_{i-1}}(n_i\lbeta_i - n_{i-1}\lbeta_{i-1})\Big(\frac{\nu_{i+1}}{N_{i+1}}-\frac{\nu_i}{N_i}\Big) > \frac{e_{i+1}}{n_i\lbeta_{i+1}}(n_{i+1}\lbeta_{i+1} - n_i\lbeta_i)\Big(\frac{\nu_i}{N_i} - \frac{\nu_{i-1}}{N_{i-1}}\Big).\] Using formula~\eqref{DifferencePoles} from the proof of Lemma~\ref{LemmaRelPoles}, this can be rewritten as
\begin{align*} 
	&\frac{e_i}{n_{i-1}\lbeta_{i-1}}(n_i\lbeta_i - n_{i-1}\lbeta_{i-1})\Big(\frac{1}{n_i\lbeta_i} - \frac{1}{n_{i+1}\lbeta_{i+1}}\Big)\Big(-\lbeta_0 + \sum_{l=1}^i(n_l - 1)\lbeta_l\Big) \\
	& > \frac{e_{i+1}}{n_i\lbeta_{i+1}}(n_{i+1}\lbeta_{i+1} - n_i\lbeta_i)\Big(\frac{1}{n_{i-1}\lbeta_{i-1}} - \frac{1}{n_i\lbeta_i}\Big)\Big(-\lbeta_0 + \sum_{l=1}^{i-1}(n_l - 1)\lbeta_l\Big).
\end{align*} 
Finally, multiplying both sides by $\frac{n_{i-1}n_in_{i+1}\lbeta_{i-1}\lbeta_i\lbeta_{i+1}}{e_i} = \frac{n_{i-1}n_i\lbeta_{i-1}\lbeta_i\lbeta_{i+1}}{e_{i+1}}  $ gives the condition
\begin{align*}
	&(n_i\lbeta_i - n_{i-1}\lbeta_{i-1})(n_{i+1}\lbeta_{i+1} - n_i\lbeta_i )\Big(-\lbeta_0 + \sum_{l=1}^i(n_l - 1)\lbeta_l\Big) \\ 
	& >  \frac{1}{n_i}(n_{i+1}\lbeta_{i+1} - n_i\lbeta_i)(n_i\lbeta_i - n_{i-1}\lbeta_{i-1} )\Big(-\lbeta_0 + \sum_{l=1}^{i-1}(n_l - 1)\lbeta_l\Big),
\end{align*}
	which is easily seen to hold. 
\end{proof}

\begin{rem}
Applying the same specialization to the local version of the motivic zeta function, one obtains a local version for the topological zeta function. Because the limit for $\Lmb \rightarrow 1$ of \[\frac{(1-T)(\Lmb-1)\Lmb^{-(g+1)}}{1-\Lmb^{-g}T}\vert_{T = \Lmb^{-s}}\] is equal to $0$, the global and local topological zeta function of $Y$ are identical. Hence, the results in this section are true for both the global and the local motivic zeta function.
\end{rem}

Theorem~\ref{ThmPoles} implies in particular that the motivic (resp. topological) zeta function of the special fiber $Y$ of the family $\eta:(\chi,0) \rightarrow (\co,0)$ has the same number of poles as the motivic (resp. topological) zeta function of a generic fiber in the family, whose poles are equal to the poles associated with the plane branch times a factor $\Lmb^{g-1}$ (resp. with an integer shift of $-(g-1)$). This is intriguing as the induced family on the jet schemes is in most cases not flat by Theorem~\ref{TheoremNotFlat}, and the motivic zeta function is calculated in terms of the codimensions of the (irreducible components of) the jet schemes. The poles, however, are in general not equal; one can check, using for instance the expressions in~\cite[Section 2]{NV}, that only the poles $\Lmb^g$ and $\Lmb^{\frac{\nu_g}{N_g}}$ of the motivic (resp. $-g$ and $-\frac{\nu_g}{N_g}$ of the topological) zeta function associated with $Y$ are also poles associated with a generic fiber.


\begin{thebibliography}{99}

\bibitem[AGS]{AGS}
{F. Aroca, M. {G\'{o}mez-Morales} \and K. Shabbir}, `Torical modification of {N}ewton non-degenerate ideals', {\em Rev. R. Acad. Cienc. Exactas F\'{i}s. Nat. Ser. A Mat. RACSAM }107 no. 1 (2013), 221--239.
%
\bibitem[AM]{AM}
{S. S. Abhyankar \and T. T. Moh}, `Newton-{P}uiseux expansion and generalized {T}schirnhausen transformation {I}, {II}', {\em J. Reine Angew. Math. }260 (1973), 47--83; {\em ibid. }261 (1973), 29--54.
%
\bibitem[Aze]{A}
{A. Azevedo}, {\em On the {J}acobian ideal of a plane curve} (PhD thesis, Purdue University, 1967).
%
\bibitem[BV]{BV}
{B. Bories \and W. Veys}, `Igusa's {$p$}-adic local zeta function and the monodromy conjecture for non-degenerate surface singularities', {\em Mem. Amer. Math. Soc. }242 no. 1145 (2016).
%
\bibitem[Cau]{C}
{T. Cauwbergs}, `Splicing motivic zeta functions', {\em Rev. Mat. Complut. }29 no. 2 (2016), 455--483.
%
\bibitem[CM]{CM}
{H. Cobo \and H. Mourtada}, `Jet schemes of quasi-ordinary surface singularities', {\em Nagoya Math. J. } (2019), 1--88.
%
\bibitem[DL1]{DL1}
{J. Denef \and F. Loeser}, `Caract\'{e}ristiques d'{E}uler-{P}oincar\'{e}, fonctions z\^{e}ta locales et modifications analytiques', {\em J. Amer. Math. Soc. }5 no. 4 (1992), 705--720.
%
\bibitem[DL2]{DL2}
{J. Denef \and F. Loeser}, `Motivic {I}gusa zeta functions', {\em J. Algebraic Geom. }7 no. 3 (1998), 505--537.
%
\bibitem[GT]{GT}
{R. Goldin and B. Teissier}, `Resolving singularities of plane analytic branches with one toric morphism', in {\em Resolution of singularities ({O}bergurgl, 1997)}, Progr. Math. 181 (Birkh\"{a}user, Basel, 2000), 315--340.
%
\bibitem[HMY]{HMY}
{J. Howald, M. Musta\c{t}\u{a} \and C. Yuen}, `On {I}gusa zeta functions of monomial ideals', {\em Proc. Amer. Math. Soc. }135 no. 11 (2007), 3425--3433.
%
\bibitem[Igu]{I}
{J. Igusa}, `Complex powers and asymptotic expansions {I} {F}unctions of certain
  types, {II} {A}symptotic expansions', {\em J. Reine Angew. Math. } 268/269 (1974), 110--130; {\em ibid. }278/279 (1975), 307--321.
%
\bibitem[Kol]{K}
{J. Koll\'{a}r}, `Singularities of pairs', in {\em Algebraic geometry--{S}anta {C}ruz 1995}, Proc. Sympos. Pure Math. 62 (Amer. Math. Soc., Providence, RI, 1997), 221--287.
%
\bibitem[Ley]{LA}
{M. {Leyton-\'{A}lvarez}}, `Deforming spaces of {$m$}-jets of hypersurfaces singularities', {\em J. Algebra }508 (2018), 81--97.
%
\bibitem[LMR]{LMR}
{M. {Lejeune-Jalabert}, H. Mourtada and A. Reguera}, `Jet schemes and minimal embedded desingularization of plane branches', {\em Rev. R. Acad. Cienc. Exactas F\'{i}s. Nat. Ser. A Mat. RACSAM }107 no. 1 (2013), 145--157.
%
\bibitem[Loe]{L}
{F. Loeser}, `Fonctions d'{I}gusa {$p$}-adiques et polyn\^{o}mes de {B}ernstein', {\em Amer. J. Math. }110 no. 1 (1988), 1--21.
%
\bibitem[Mou1]{M1}
{H. Mourtada}, `Jet schemes of complex plane branches and equisingularity', {\em Ann. Inst. Fourier (Grenoble) }61 no. 6 (2011), 2313--2336.
%
\bibitem[Mou2]{M2}
{H. Mourtada}, `Jet schemes and generating sequences of divisorial valuations in dimension two', {\em Michigan Math. J. }66 no. 1 (2017), 155--174.
%
\bibitem[Mou3]{M3}
{H. Mourtada}, `Jet schemes of normal toric surfaces', {\em Bull. Soc. Math. France }145 no. 2 (2017), 237--266.
%
\bibitem[Mus1]{Mu1}
{M. Musta{\c t}{\v a}}, `Singularities of pairs via jet schemes', {\em J. Amer. Math. Soc. }15 no. 3 (2002), 599--615.
%
\bibitem[Mus2]{Mu2}
{M. Musta\c{t}\u{a}}, `I{MPANGA} lecture notes on log canonical thresholds', in {\em Contributions to algebraic geometry}, EMS Ser. Congr. Rep. (Eur. Math. Soc., Z\"{u}rich, 2012), 407--442.
%
\bibitem[MVV]{MVV}
{J. Mart\'{\i}n-Morales, W. Veys and L. Vos}, `The monodromy conjecture for a space monomial curve with a plane semigroup', to appear in {\em Publ. Mat.}, arXiv:1912.06005 (2019).
%
\bibitem[NV]{NV}
{A. N\'{e}methi \and W. Veys}, `Monodromy eigenvalues are induced by poles of zeta functions: the irreducible curve case', {\em Bull. Lond. Math. Soc. }42 no. 2 (2010), 312--322.
%
\bibitem[NX]{NX}
{J. Nicaise \and C. Xu}, `Poles of maximal order of motivic zeta functions', {\em Duke Math. J. } 165 no. 2 (2016), 217--243.
%
\bibitem[Pot]{P}
{N. Potemans}, {\em Resolution of singularities: the {I}gusa and topological zeta function} (Master's thesis, KU Leuven, 2018).
%
\bibitem[RV]{RV}
{B. Rodrigues \and W. Veys}, `Poles of zeta functions on normal surfaces', {\em Proc. London Math. Soc. }87 no. 1 (2003), 164--196.
%
\bibitem[Spi]{S}
{M. Spivakovsky}, `Valuations in function fields of surfaces', {\em Amer. J. Math. }112 no. 1 (1990), 107--156.
%
\bibitem[Tei1]{T1}
{B. Teissier}, Appendix to {O.} {Z}ariski's course {\em The moduli problem for plane branches}, University Lecture Series 39 (Amer. Math. Soc., Providence, RI, 2006).
%
\bibitem[Tei2]{T2}
{B. Teissier}, `Overweight deformations of affine toric varieties and local uniformization', in {\em Valuation theory in interaction}, EMS Ser. Congr. Rep. (Eur. Math. Soc., Z\"{u}rich, 2014), 474--565.
%
\bibitem[Tev1]{Te1}
{J. Tevelev}, `Compactifications of subvarieties of tori', {\em Amer. J. Math. }129 no. 4 (2007), 1087--1104.
%
\bibitem[Tev2]{Te2}
{J. Tevelev}, `On a question of {B}. {T}eissier', {\em Collect. Math. }65 no. 1 (2014), 61--66.
%
\bibitem[Voj]{V}
{P. Vojta}, `Jets via Hasse-Schmidt derivations', in {\em Diophantine geometry}, CRM Series 4 (Ed. Norm., Pisa, 2007), 335--361.
%
\bibitem[VV]{VV}
{L. {Van Proeyen} \and W. Veys}, `The monodromy conjecture for zeta functions associated to ideals in
  dimension two', {\em Ann. Inst. Fourier (Grenoble) }60 no. 4 (2010), 1347--1362.
%
\bibitem[VZ]{VZ}
{W. Veys \and W. A. {Z\'{u}\~{n}iga-Galindo}}, `Zeta functions for analytic mappings, log-principalization of ideals, and {N}ewton polyhedra', {\em Trans. Amer. Math. Soc. }360 no. 4 (2008), 2205--2227.
%
\bibitem[Wei]{W}
{A. Weil}, `Sur la formule de {S}iegel dans la th\'{e}orie des groupes classiques', {\em Acta Math. }113 (1965), 1--87.
%
\bibitem[Zar]{Z}
{O. Zariski}, {\em The moduli problem for plane branches}, University Lecture Series 39 (Amer. Math. Soc., Providence, RI, 2006).
\end{thebibliography}
\end{document}